\documentclass[twoside,11pt]{amsart}
\usepackage{graphicx}
\usepackage{hyperref,color}
\usepackage{subfigure}



\usepackage{amsmath}
\usepackage{amsfonts}
\usepackage{amssymb}
\usepackage{latexsym}
\usepackage{upref}

\topmargin-0.1in
\textwidth6.in
\textheight8.5in
\oddsidemargin0.1in
\evensidemargin0.1in

\newtheorem{theorem}{Theorem}[section]

\newtheorem{corollary}[theorem]{Corollary}
\theoremstyle{definition}

\newtheorem{remark}[theorem]{Remark}
\allowdisplaybreaks
\numberwithin{equation}{section}

\def\SN{P_N}  
\def\AN{A_N}  
\def\SR{{\mathcal S}} 
\def\Id{I} 
\def\IN{\psi_N} 
\def\Iw{\IN[w]}
\def\Pw{\SN[w]}
\def\Tw{(I-\SN)[w]}  
\def\IminusIw{(I-\IN)[w]}
\def\Aw{\AN[w]}

\def\umk{u_{m_k}}
\def\ubar{\overline{u}}
\def\bubar{\overline{\mathbf u}}

\def\part{\partial}
\def\ddx{\frac{\part}{\part x}}
\def\ddt{\frac{\part}{\part t}}
\def\dt{\frac{d}{dt}}

\def\buildrul#1\under#2{\mathrel{\mathop{\null#2}\limits_{#1}}}
\def\hf{\frac{1}{2}}
\def\threehf{\frac{3}{2}}

\def\bx{{\mathbf x}}
\def\nablax{\nabla_{\bx}}

\def\x1{{x_1}}
\def\x2{{x_2}}

\def\pil{0}  
\def\pir{2\pi} 
\def\tpi{2\pi} 

\def\uN{u_N}
\def\um{u_m} 
\def\bu{{\mathbf u}}
\def\bv{{\mathbf v}}
\def\bw{{\mathbf w}}
\def\buN{{\mathbf u}_N}
\def\bum{{{\mathbf u}_m}}

\def\bx{\mathbf x}
\def\bk{\mathbf k}
\def\bj{\boldsymbol\ell} 
\def\bp{\mathbf p}
\def\bnu{\boldsymbol\nu} 

\def\iprod{\cdot}
\def\tor{{\mathbb T}^d}
\def\tpid{(\tpi)^d}
\def\dhf{\frac{d}{2}}

\def\aj{\ell}
\def\tthirds{\frac{2}{3}}
\newcommand*  {\Lr} {{\mathbb P}}
\def\del{\partial}

\newcommand{\R}{\Bbb{R}}

\newcommand{\myb}[1]{\textcolor{blue}{#1}}



\theoremstyle{definition}

\theoremstyle{remark}

\setcounter{tocdepth}{1}


\numberwithin{equation}{section}


\newcounter{asnr}

{\ifnum\value{asnr}=0 \stepcounter{asnr} 
  \begin{enumerate}[label=\textbf{A}.\arabic{enumi}]
    \else
    \begin{enumerate}[label=\textbf{A}.\arabic{enumi},resume] \fi}
{\end{enumerate}}



\begin{document}

\title[On the stability and instabilities of the Fourier method]{Stability  and spectral convergence of\\Fourier method for nonlinear problems:\\On the shortcomings of the $2/3$ de-aliasing method}

\author[C. Bardos]{Claude Bardos}\address[Claude Bardos]{
\newline University of Paris 7- Denis Diderot
\newline Laboratory Jacques Louis Lions, University of Paris 6
\newline Paris, France}
\email[]{claude.bardos@gmail.com}

\author[E. Tadmor]{Eitan Tadmor}\address[Eitan Tadmor]{
\newline Department of Mathematics
\newline Center of Scientific Computation and Mathematical Modeling (CSCAMM) 
\newline Institute for Physical sciences and Technology (IPST)
\newline University of Maryland
\newline MD 20742-4015, USA}
\email[]{tadmor@cscamm.umd.edu}

\date{\today}
\subjclass{}
\keywords{}

\thanks{\textbf{Acknowledgment.}  E. T. Research was supported in part by NSF grants DMS10-08397, RNMS11-07444 (KI-Net) and   ONR grant \#N00014-1210318.}

\begin{abstract}
The high-order accuracy of Fourier method  makes it the method of choice in many large scale simulations. We discuss here the stability of Fourier method for nonlinear evolution problems, focusing on the two prototypical cases of the inviscid Burgers' equation and the multi-dimensional  incompressible Euler equations. The Fourier method for such problems with quadratic nonlinearities comes in two main flavors. One is the spectral Fourier method. The other is the $2/3$  pseudo-spectral Fourier method, where one removes the highest $1/3$ portion of the spectrum; this  is often the method of choice to maintain the balance of quadratic energy and avoid aliasing errors.\newline
Two main themes are discussed in this paper. First, we prove that as long as the underlying exact solution has a minimal $C^{1+\alpha}$ spatial regularity, then both  the spectral and the $2/3$ pseudo-spectral Fourier methods are  stable. Consequently, we  prove their spectral convergence for smooth solutions of the inviscid Burgers equation and the  incompressible Euler equations. On the other hand, we prove that after a critical time at which  the underlying solution lacks sufficient smoothness, then both  the spectral and the $2/3$ pseudo-spectral Fourier methods exhibit nonlinear instabilities which are realized through spurious oscillations. 
In particular, after shock formation in inviscid Burgers' equation, the total variation of  bounded (pseudo-) spectral Fourier solutions \emph{must} increase with the number of increasing modes and we stipulate the analogous situation occurs with the 3D incompressible Euler equations: the limiting Fourier solution is shown to enforce $L^2$-energy conservation, and the  contrast with energy dissipating Onsager solutions  is reflected through spurious oscillations.
\end{abstract}

\maketitle
\tableofcontents
\section{Introduction}\label{sec:intro}

Spectral methods are often the methods of choice when high-resolution solvers are sought for nonlinear time-dependent problems.
Here, we are concerned  with the stability and convergence of Fourier method for PDEs with quadratic nonlinearities: we focus our attention on the prototypical Cauchy problems for the inviscid Burgers' equation and the incompressible Euler equations. 

The Fourier methods for problems involving quadratic nonlinearities come in two main flavors: the \emph{spectral Fourier method} and the 2/3 smoothing of \emph{pseudo-spectral Fourier method}. The spectral Fourier method is realized in terms of $N$-degree Fourier expansions,
$\buN(\bx,t)=\mathop{\sum}_{|\bk|\leq N} \widehat{\bu}_{\bk}(t)e^{i\bk\iprod\bx}$, where $\widehat{\bu}_{\bk}(t)$  are the Fourier moments of $\bu(\bx,t)$
\[
\widehat{\bu}_{\bk}(t) = \frac{1}{\tpid}\int_{\tor}\!\!\bu(\bx)e^{-i\bk\iprod\bx}d\bx, \quad \bk :=(k_1,\ldots,k_d) \in {\mathbb Z}^d.
\]
The computation of these moments in nonlinear problems is carried out by convolutions. These can be avoided when the $\widehat{\bu}_{\bk}$'s are replaced by the  discrete Fourier coefficients, sampled at the $(2N+1)^d$ equally spaced grid points 
\[
\widetilde{\bu}_{\bk}(t) = \left(\frac{1}{2N+1}\right)^d \sum_{\bx_{\bnu}\in \tor_\#} \bu(\bx_{\bnu},t)e^{-i\bk\iprod\bx_{\bnu}},
\qquad \bx_{\bnu}=\frac{\tpi\bnu}{2N+1},
\]
where $\tor_\#$ is the discrete torus, 
\[
\tor_\#:=\left\{\bx_{\bnu}\ \large| \ \bx_{\bnu}=\frac{\tpi\bnu}{2N+1}, \quad \bnu=(\nu_1,\ldots,\nu_d), \ 0\leq \nu_j\leq 2N\right\}.
\]
The pseudo-spectral Fourier method is realized in terms of the corresponding expansion, $\buN(\bx,t)=\sum_{|\bk|\leq N} \widetilde{\bu}_{\bk}(t)e^{i\bk\iprod\bx}$. Here, we have the advantage that nonlinearities are computed as exact pointwise quantities at the grid points $\{\bx_{\bnu}\}_{\bnu}$, but new aliasing errors are introduced. To avoid aliasing errors and their potential instabilities, high mode smoothing is implemented, which results in the so-called $2/3$-smoothing of pseudo-spectral Fourier method: it is realized in terms of the $2N/3$-degree expansion, $\buN(\bx,t)=\sum_{|\bk|\leq 2N/3} \sigma_{\bk}\widetilde{\bu}_{\bk}(t)e^{i\bk\iprod\bx}$. This is the spectral method of choice in many  time-dependent problems with quadratic nonlinearities.

To put our discussion into perspective we begin, in section \ref{sec:lin}, by recalling the linear setup of standard transport equation. The spectral Fourier method is $L^2$-stable. But the pseudo-spectral Fourier method is not \cite{GHT94}: it is only \emph{weakly} stable, due to amplification of aliasing errors when the underlying solution lacks sufficient smoothness. Strong $L^2$-stability is regained with the  $2/3$-smoothing of pseudo-spectral Fourier method, \cite{Tad87}; in the linear setup,  the de-aliasing in the $2/3$-method introduces sufficient  smoothness to maintain convergence. This is one of the main two themes of our results on nonlinear problems: sufficient smoothness guarantees stability and hence spectral convergence. In section \ref{sec:burgers} we explore this issue in the context of inviscid Burgers equations, proving  that as long as the solution of the inviscid Burgers equation remains smooth, $u(\cdot,t)\in C^{1+\alpha}_x$, then both the spectral  and the $2/3$-pseudo-spectral Fourier approximations, $\uN(\cdot,t)$, converge to the exact solution. Moreover, they enjoy \emph{spectral convergence rate}, namely, the convergence rate grows with  the increasing smoothness of $u(\cdot,t)$,
\begin{eqnarray*}
\lefteqn{\int |\uN(x,t)-u(x,t)|^2dx} \\
& &  \lesssim e^{\displaystyle \int_0^t\!\!\|u_x(\cdot,\tau)\|_{L^\infty}d\tau} \!\!\cdot \left(N^{-2s}\|u(\cdot,0)\|^2_{H^s} + N^{\threehf-s}\max_{\tau\leq t}\|u(\cdot,\tau)\|_{H^s}\right)\!, \quad s>\threehf.
\end{eqnarray*}

A similar statement of spectral convergence holds for the spectral and $2/3$ pseudo-spectral Fourier approximations $\buN$ of  the incompressible Euler equations: in section \ref{sec:smoothEuler} we prove that as long as $\bu(\cdot,t)$ remains sufficiently smooth solution of the $d$-dimensional Euler equations, $\bu(\cdot,t)\in C^{1+\alpha}_x$, then  
\begin{eqnarray*}
\lefteqn{\|\buN(\cdot,t)-\bu(\cdot,t)\|^2_{L^2}} \\
& &  \lesssim e^{\displaystyle 2\!\int_0^t\!\!\|\nablax\bu(\cdot,\tau)\|_{L^\infty}d\tau} \!\!\cdot \left(N^{-2s}\|\bu(\cdot,0)\|^2_{H^s} + N^{\frac{d}{2}+1-s}\max_{\tau\leq t}\|\bu(\cdot,\tau)\|_{H^s}\right)\!, \ s>\frac{d}{2}+1.
\end{eqnarray*} 
These results support the superiority of spectral methods for problems with smooth solutions. When dealing with  solutions  which lack smoothness, however, both the spectral and $2/3$ pseudo-spectral Fourier methods suffer nonlinear instabilities. This is the other main theme of the paper, explored  in the context of the inviscid Burgers equation and the incompressible Euler equations in the respective  sections \ref{sec:2-3rd} and   \ref{sec:nonsmoothEuler}. In particular,   we prove that  after shock formation, the spectral and $2/3$ pseudo-spectral bounded approximations of the inviscid Burgers solution  \emph{must} produce spurious oscillations as their total variation must increase, $\|\uN(\cdot,t)\|_{TV} \gtrsim \sqrt[4]{N}$. This is deduced by contradiction:  in theorem \ref{thm:instab}  below we prove, using compensated compactness arguments, that an $L^2$-weak limit of slowly growing TV Fourier solutions, $\ubar=\text{w}\lim \uN$,  must be an $L^2$-energy \emph{conservative} solution, which cannot hold once shocks are formed.\newline
A similar scenario arises with the  Euler solutions where the spectral and the ($2/3$ pseudo-)spectral approximations of Euler equations enforce conservation of the $L^2$-energy.  Although there is  no known energy dissipation-based selection principle to identify a unique solution of  Euler equations within the class of ``rough" data (similar to the entropy dissipation selection principle for Burgers' equations), nevertheless we argue that the $L^2$-energy conservation of the (pseudo-)spectral approximations may be responsible to their  unstable behavior. 
While $L^2$-energy conservation holds for weak solutions with a minimal degree of $1/3$-order of smoothness (Onsager's  conjecture proved  in \cite{Ey94,CET94,BT10}), there are  experimental and
numerical evidence for the other part of Onsager's conjecture that anomalous dissipation of energy shows  up for ``physical-turbulent"  $L^2$-solutions of Euler equations \cite{Co07}. Whether this observed anomalous dissipation of energy should be due to spontaneous appearance of singularities in smooth solutions of the Euler equation or to the fact that physical initial data may be rough is a completely open problem. However  after several preliminary breakthrough \cite{Sc93} and \cite{Sh97} the following fact are now well established. Indeed, there are infinitely many initial data (which of course are not regular) leading to infinitely many weak  Euler solutions with energy loss \cite{DeLS12}. In particular there are  solutions which for almost  every time  belong to the critical regularity $C^{\frac13-\epsilon}$ \cite{Buck13}. Thus, if the numerical method captures such ``rough"  solutions then the ``unphysical" conservation of energy which is enforced at the spectral level has to vanish at the limit, leading to spurious oscillations.

We close this paper with two complementary results. 
 The nonlinear instability results in sections \ref{sec:2-3rd} and \ref{sec:nonsmoothEuler} emphasize  the  competition between spectral convergence for smooth solutions vs. nonlinear instabilities for problems which lack sufficient smoothness. 
In section \ref{sec:sv} we discuss the class of \emph{spectral viscosity} (SV) methods which entertain both --- spectral convergence and nonlinear stability, \cite{Tad89,Tad93b, KK00,GP03,SS07,AX09}. This is  achieved by adding a judicious amount of spectral viscosity at the high-portion of the spectrum without sacrificing the spectral accuracy at the lower portion of the spectrum. Finally, we note that the above stability and instability results are not necessarily restricted to quadratic nonlinearities: in section \ref{sec:isen} we prove the stability of Fourier method for smooth solutions of the nonlinear isentropic equations.

\subsection{Spectral convergence}\label{sec:spec-convergence}
Expressed in terms of the Fourier coefficients, $\widehat{w}(\bk)$, the \emph{spectral} Fourier  projection $\Pw(\bx)$ of $w\in L^1[\tor]$ is given by
\[
\Pw(\bx)=\sum_{|\bk|\leq N}\widehat{w}(\bk)e^{i\bk\iprod\bx}, \qquad \widehat{w}(\bk):=\frac{1}{\tpid}\int_{\tor}\!\!w(\bx)e^{-i\bk\iprod\bx}d\bx, \quad \bk :=(k_1,\ldots,k_d) \in {\mathbb Z}^d.
\]
The convergence rate of the \emph{truncation error},
\begin{equation}\label{eq:tr}
\Tw(\bx):=\sum_{|\bk|\geq N}\widehat{w}(\bk)e^{i\bk\iprod\bx},
\end{equation}
is as rapid as the global smoothness of $w$ permits (and observe that the degree of smoothness is allowed to be negative),
\[
\|\Tw\|_{\dot{H}^r} \leq N^{r-s}\|w\|_{\dot{H}^s}, \qquad s>r \in {\mathbb R}; 
\]
in particular, 
\begin{equation}\label{eq:maxHs}
\max_\bx|(I-\SN)[w](\bx)| \lesssim N^{\dhf-s}\|w\|_{H^s}, \qquad s>\dhf.
\end{equation}
These are statements of \emph{spectral convergence} rate: the smoother $w$ is, the faster is the convergence rate of $\Tw \rightarrow 0$. In practice, one recovers exponential convergence which characterizes analytic regularity or at least root-exponential rate for  typical compactly supported Gevrey-regular data, \cite{Tad07}.

\subsection{Aliasing}\label{sec:aliasing}
Set $h:=\frac{\tpi}{2N+1}$ as a discrete spacing. If we replace the integrals with  quadrature  based on sampling  $w$ at the $(2N+!)^d$equi-spaced points, $\displaystyle \bx_{\bnu}:=\bnu h, \ \bnu:=(\nu_1,\ldots,\nu_d)\in \{0,2N\}^d$, we obtain the  \emph{pseudo-spectral}  Fourier projection,  
\[
\Iw(\bx)=\sum_{|\bk|\leq N}\widetilde{w}(\bk)e^{i\bk\iprod\bx}, \qquad \widetilde{w}(\bk):=\left(\frac{h}{\tpi}\right)^{\!\!d}\sum_{\bx_{\bnu} \in  \tor_\#}w(\bx_{\bnu})e^{-i\bk\iprod\bx_{\bnu}}, \quad |\bk| \leq N.
\]
Here, $\widetilde{w}(\bk)$, are the  discrete Fourier coefficients\footnote{There is a slight difference between the formulae based on an even and an odd number of points;  we chose to continue with the slightly simpler notations associated with an odd number of points.}.
The mapping $w\mapsto \Iw$ is a projection:  $\Iw(\bx)$ is  the unique $N$-degree \emph{trigonometric interpolant} of $w$ at the $(2N+1)$-gridpoints, $\Iw(\bx_{\bnu})=w(\bx_{\bnu}), \ |\bnu|\leq  2N$.
The dual statement of the last equalities is the Poisson summation formula, which determines the discrete $\widetilde{w}(\bk)$'s in terms of the exact Fourier coefficients, $\widehat{w}(\bk)$'s,
\[
\widetilde{w}(\bk)=\widehat{w}(\bk) + \sum_{\bj\neq 0} \widehat{w}(\bk+\bj(2N+1)), \qquad |\bk|\leq N,
\]
where summation runs over all $d$-tuples, $\bj=(\ell_1,\ldots,\ell_d)\neq 0$.
It shows that all the Fourier coefficients with wavenumber $\bk[mod (2N+1)]$ are ``aliased" into the same discrete Fourier coefficient, $\widetilde{w}_\bk$. It follows that the  interpolation error consists of two main contributions, 
\[
\IminusIw \equiv \Tw+\Aw,
\]
 where in addition to the truncation error $\Tw$ in (\ref{eq:tr}),  we now have the  \emph{aliasing error},
\begin{equation}\label{eq:al}
\AN[w](\bx):=\sum_{|\bk|\leq N}\Big(\sum_{|\bj|\geq 1}\widehat{w}(\bk+\bj(2N+1))\Big)e^{i\bk\iprod\bx}.
\end{equation}
Both, $\Tw$ and $\Aw$, involve high modes, $\widehat{w}(\bp),\ |\bp|\geq N$. Consequently, if the function $w(\cdot)$ is sufficiently smooth then they have exactly the same spectrally small size, e.g. \cite[\S2.2]{Tad94}
\[
\|\AN[w]\|_{H^s} \lesssim \|\Tw\|_{H^s} \lesssim N^{s-r}\|w\|_{H^r}, \qquad r>s>\dhf.
\]
Since  the truncation error is orthogonal to the computational $N$-space whereas the aliasing error is not the situation is different if $w$ lacks smoothness. Then  aliasing  and truncation errors may have a completely different influence  on the question of computational stability. One such case is encountered with the  stability question of spectral vs. pseudo-spectral approximations of hyperbolic equations.

\section{Fourier method for linear equations --- weak instability for $L^2$-data}\label{sec:lin}
\subsection{The spectral approximation: stability and convergence.}
We want to solve the   $\tpi$-periodic scalar hyperbolic equation
\begin{equation}\label{eq:pde}
\frac{\partial}{\partial t}u(x,t) + \frac{\partial}{\partial x}\big(q(x)u(x,t)\big)=0, \qquad x\in {\mathbb T}([0,2\pi)), \ \ q\in C^1[\pil,\pir],
\end{equation}
subject to prescribed initial conditions, $u(\cdot,0)$, by  the spectral Fourier method. To this end we approximate the spectral projection of the exact solution, $\SN u(\cdot,t)$, using an $N$-degree polynomial, $\uN(x,t)=\sum_{|k|\leq N}\widehat{u}_k(t)e^{ikx}$, which is governed by the semi-discrete approximation, \cite{Or72,KO72,GO77}
\begin{equation}\label{eq:splina}
\frac{\partial}{\partial t}\uN(x,t)+\frac{\partial}{\partial x}\SN[q(x)\uN(x,t)]=0.
\end{equation}
The approximation is realized as a convolution in Fourier space
\[
\frac{d}{dt}\widehat{u}_k(t) = ik\sum_{|j|\leq N}\widehat{q}(k-j)\widehat{u}_j(t), \qquad k=-N,\ldots,N,
\]
which amounts to a system of $(2N+1)$ ODEs for the computed $\widehat{\bf u}(t):= (\widehat{u}_{-N}(t),\ldots,\widehat{u}_N(t))^\top$.

The $L^2$-stability of \eqref{eq:splina} is straightforward: though the truncation error which enters (\ref{eq:splina}), $\partial_x(I-\SN)[q(x)\uN(x)]$ need not be small, it  is orthogonal to the $N$-space, and hence,
\begin{eqnarray}
\frac{1}{2}\frac{d}{dt}\|\uN(\cdot,t)\|^2_{L^2} &=& -\int_{\pil}^{\pir} \uN\frac{\partial}{\partial x}\SN[q(x)\uN]dx  
 =  \int\frac{\partial \uN}{\partial x} \SN[q(x)\uN]dx  \nonumber  \\
& =&  
\int\frac{\partial \uN}{\partial x} q(x)\uN dx = -\frac{1}{2}
\int q'(x)\uN^2dx  \label{eq:spstab} \\
&\leq & 
\frac{1}{2}\max_x|q'(x)|\times\|\uN(\cdot,t)\|^2_{L^2}. \nonumber
\end{eqnarray}
This  yields the $L^2$-stability bound,
\begin{equation}\label{eq:spl2stab}
\|\uN(\cdot,t)\|^2_{L^2} \leq e^{q'_\infty t}\|\uN(\cdot,0)\|^2_{L^2}, \qquad q'_\infty:=\max_x|q'(x)|.
\end{equation}

To convert this stability bound into a spectral convergence rate estimate, consider the difference between the spectral method (\ref{eq:splina}) and the $\SN$ projection of the underlying equation (\ref{eq:pde}): one finds that  $e_N:=\uN-\SN u$, satisfies the error equation
\[
\frac{\partial}{\partial t}e_N(x,t)+\frac{\partial}{\partial x}\SN[q(x)e_N(x,t)]= -\ddx \SN\big[q(x)(I-\SN)[u](x,t)\big].
\]
The $L^2$-stability  (\ref{eq:spl2stab}) implies the  error estimate,
\[
\int|\uN(x,t)-\SN u(x,t)|^2dx \lesssim e^{q'_\infty t}\left(\|(I-\SN)u(\cdot,0)\|^2_{L^2}  + N^2\max_{\tau\leq t}\|(I-\SN)[u](\cdot,\tau)\|^2_{L^2}\right).
\] 
This quantifies  the \emph{spectral convergence} of the Fourier method (\ref{eq:splina}): the convergence rate increases together with the increasing order of smoothness of the solution,
\begin{equation}\label{eq:rate}
\|\uN(\cdot,t)-u(\cdot,t)\|_{L^2} \lesssim e^{\hf q'_\infty t}\left(N^{-s}\|u(\cdot,0)\|_{H^s}  + N^{1-s}\max_{\tau\leq t}\|u(\cdot,\tau)\|_{H^s}\right), \quad s>1.
\end{equation}
In practice, one recovers   exponential convergence for analytic solutions (and root-exponential convergence for more general Gevrey data).
\subsection{The pseudo-spectral approximation: aliasing and weak stability.}
We now consider pseudo-spectral Fourier approximation of (\ref{eq:pde}). As before, we use an $N$-degree polynomial, $\uN(x,t)=\sum_{|k|\leq N}\widehat{u}_k(t)e^{ikx}$, as an approximation for $\IN u(\cdot,t)$, which is governed by the semi-discrete approximation, \cite{KO72,GO77},
\begin{equation}\label{eq:psplina}
\frac{\partial}{\partial t}\uN(x,t)+\frac{\partial}{\partial x}\IN[q(x)\uN(x,t)]=0.
\end{equation}
This equation can be realized in physical space 
\[
\dt \uN(x_j,t)=\!\!\!\!\sum_{k=-N}^N\!\!\!\!ik \widetilde{(q\uN)}_k e^{ikx_j}, \quad \widetilde{(q\uN)}_k\!\!=\!\!\frac{h}{\tpi}\sum_{\nu=0}^{2N}q(x_\nu)\uN(x_\nu)e^{-ikx_\nu}.
\]
It amounts to a  system of $(2N+1)$ ODEs for the computed gridvalues 
${\bf u}(t):=\big({u}(x_0,t),\ldots, {u}(x_{2N},t)\big)^\top$ 
\[
\frac{d}{dt}{\bf u}(t) = DQ{\bf u}(t), \qquad D_{jk}=\frac{(-1)^{j-k}}{2\sin\left(\frac{x_j-x_k}{2}\right)}, \ \ Q = 
\left(\begin{array}{cccc}q(x_0)& 0&\ldots&0 \\
0 &\ddots & 0 & \vdots\\ 
\vdots & & \ddots & 0\\
0 & 0 & \ldots & q(x_{2N})\end{array}\right)
\]
Here $D$ is the Fourier differentiation matrix and $Q$ signifies pointwise multiplication with $q(x)$. 

To examine the stability of (\ref{eq:psplina}) we repeat the usual $L^2$-energy argument for the  spectral approximation in (\ref{eq:splina}): decompose $\IN=\SN+\AN$, to find 
\begin{equation}\label{eq:pspener}
\frac{1}{2}\dt \|\uN(\cdot,t)\|^2_{L^2}  = \overbrace{\int \uN\frac{\partial}{\partial x} {\SN}[q(x)\uN]dx}^{{\rm a \ bounded \ term} - (\ref{eq:spstab})} + 
\overbrace{\int \uN\frac{\partial}{\partial x} \AN[q(x)\uN]dx}^{{\rm contribution  \ of \ aliasing}}
\end{equation}
The first term on the right consists of truncation error which, by (\ref{eq:spstab}), does not exceed $\lesssim \|\uN(\cdot,t)\|^2$. Thus, the stability of the Fourier approximation (\ref{eq:psplina}) depends solely on the aliasing contributions, $\AN[q(x)\uN]$: using (\ref{eq:al}) to expand the second term on the right, we find 
\begin{equation}\label{eq:semi_aliasing}
\int \uN\frac{\partial}{\partial x}{A_N}[q(x)\uN]dx = 2\pi i\!\!\!\!\sum_{|j|,|k|\leq N}
\overline{\widehat{u}}_j(t) \widehat{u}_{k}(t) \,(j-k)\cdot\!\!\sum_{{\aj\neq 0}}\widehat{q}\left({j-k+\aj(2N+1)}\right). 
\end{equation}
Observe that the terms on the right, $\sum_{{\aj\neq 0}}\widehat{q}\left({j-k+\aj(2N+1)}\right)$, are of order ${\mathcal O}(N)$ for $|j-k|\sim 2N, \aj=\pm 1$, and this can occur \emph{only} for high wavenumbers, $|j|\sim |k| \sim N$. Thus, there is possible ${\mathcal O}(N)$ amplification of the high Fourier modes, $|\widehat{u}_j(t)|, \ |j|\sim N$.  Unfortunately, these Fourier modes  need not be small due to lack of apriori smoothness, and aliasing may render the Fourier method as unstable.

Indeed, when $q(x)$ changes sign, the exact solution of (\ref{eq:pde})  develops large gradients and consequently, the Fourier method does experience 
spurious oscillations precisely because of aliasing errors which are ignited due to lack of smoothness. 
To demonstrate the exact mechanism of this type of instability\footnote{To demonstrate the instability of the Fourier method (\ref{eq:psplina}), one needs to consider $q(x)$ which changes sign. Otherwise, if $q(x)$ is, say, positive, then $DQ$ is similar to the anti-symmetric matrix $
{\mathcal A}:=\sqrt{Q}D\sqrt{Q}$ and stability follows  since $exp({\mathcal A}t)$ is unitary 
in ${\mathbb C}^{2N+1}$ for the scalar product $(QU_N,V_N)$ \cite{KO72}.}, we consider the example $q(x)=\sin(x)$,
\begin{equation}\label{eq:pspsin}
\frac{\partial}{\partial t}\uN(x,t)+\frac{\partial}{\partial x}\IN[\sin(x)\uN(x,t)]=0
\end{equation}
The analysis follows \cite{GHT94}. Fourier transform of (\ref{eq:pspsin}) yields
\[
\frac{d}{dt} \widehat{u}_k (t) = {k\over 2} [\widehat{u}_{k-1} (t) - \widehat{u}_{k+1}(t)], \quad k=-N,\dots, N,
\]
and its imaginary part, $b_k(t):=\Im\widetilde{u}_k(t)$, reads
\begin{equation}\label{eq:pspb}
\frac{d}{dt} b_k(t)  =  {k\over 2} \left[b_{k-1} (t) - b_{k+1}(t)\right], \ \  
b_{N+1} +b_N=0.  
\end{equation} 

The last set of ODEs is at the heart of matter.
A straightforward energy estimate yields the \emph{lower-bound}
$\displaystyle \frac{d}{dt} \|{\bf b}(t)\|^2 \geq - \|{\bf b}(t)\|^2+  N b^2_N(t)$ for ${\bf b}(t):=(b_1(t),\ldots,b_N(t))^\top$.
Does $b_N(t)$ grow? on the one hand, numerical simulations in Figure \ref{fig:a} show that it does.
On the other hand, \emph{if} the solution is smooth enough, then aliasing errors are negligible: in Figure \ref{fig:b} for example, the solution subject to  initial data $|\widehat{u}_k(0)| \sim |k|^{-3}$ remain smooth and the spurious mode, $|b_N(t)| \sim N^{-2}$, decay sufficiently fast so that ${\bf b}(t)$ remains bounded.

\begin{figure}[ht]
\begin{center}
\begin{tabular}{ccc}
\includegraphics[scale=0.3]{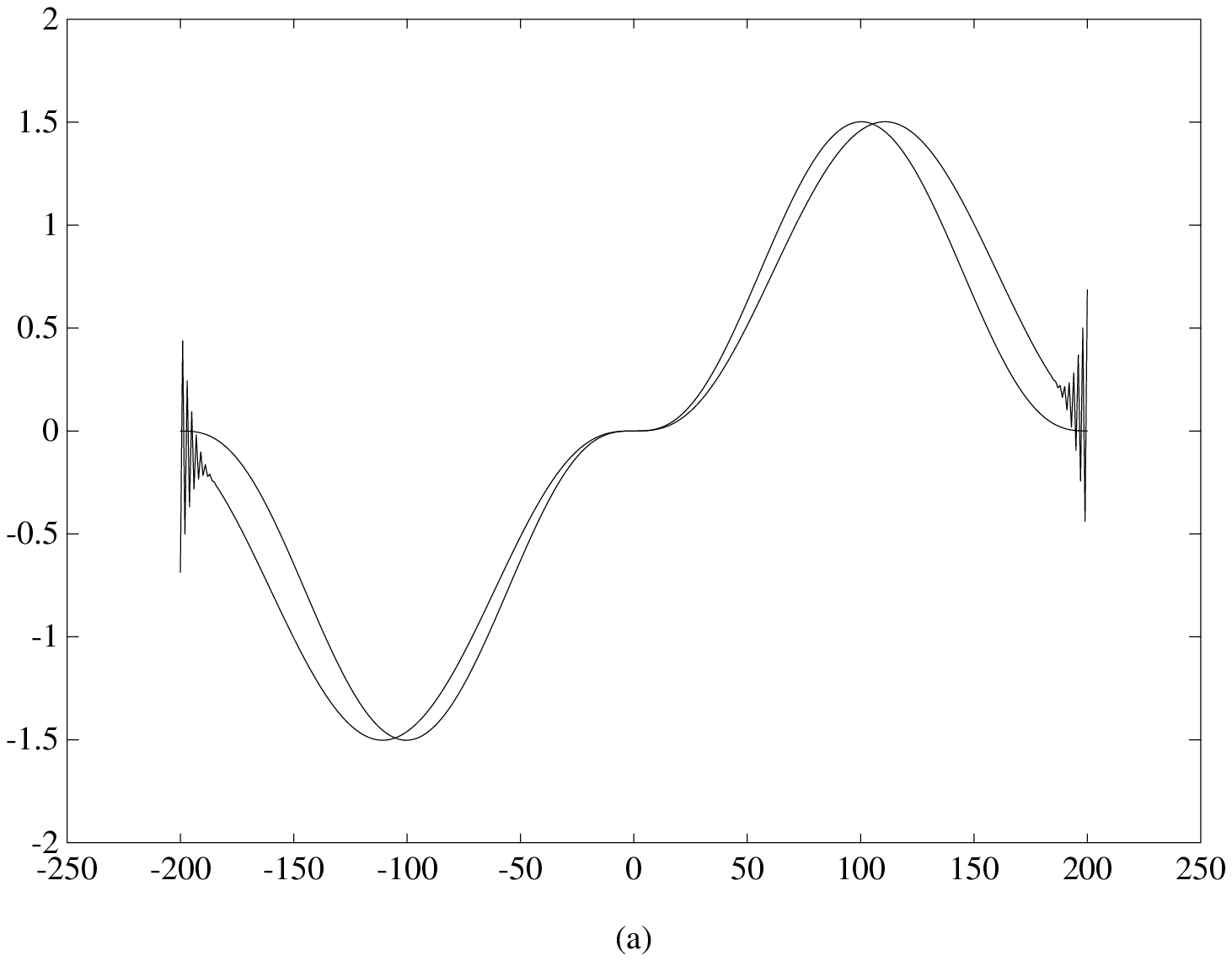} &
\includegraphics[scale=0.3]{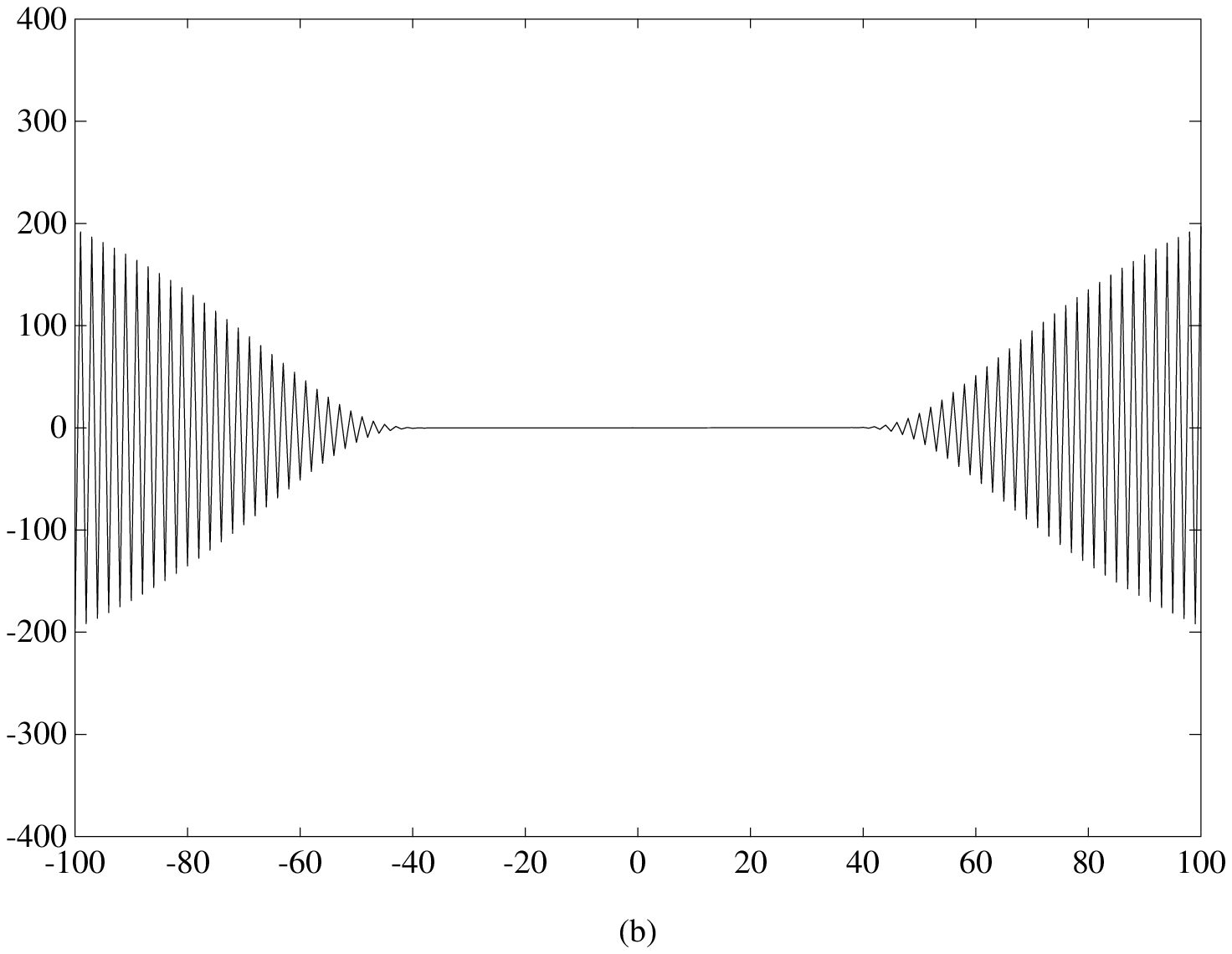} &
\includegraphics[scale=0.3]{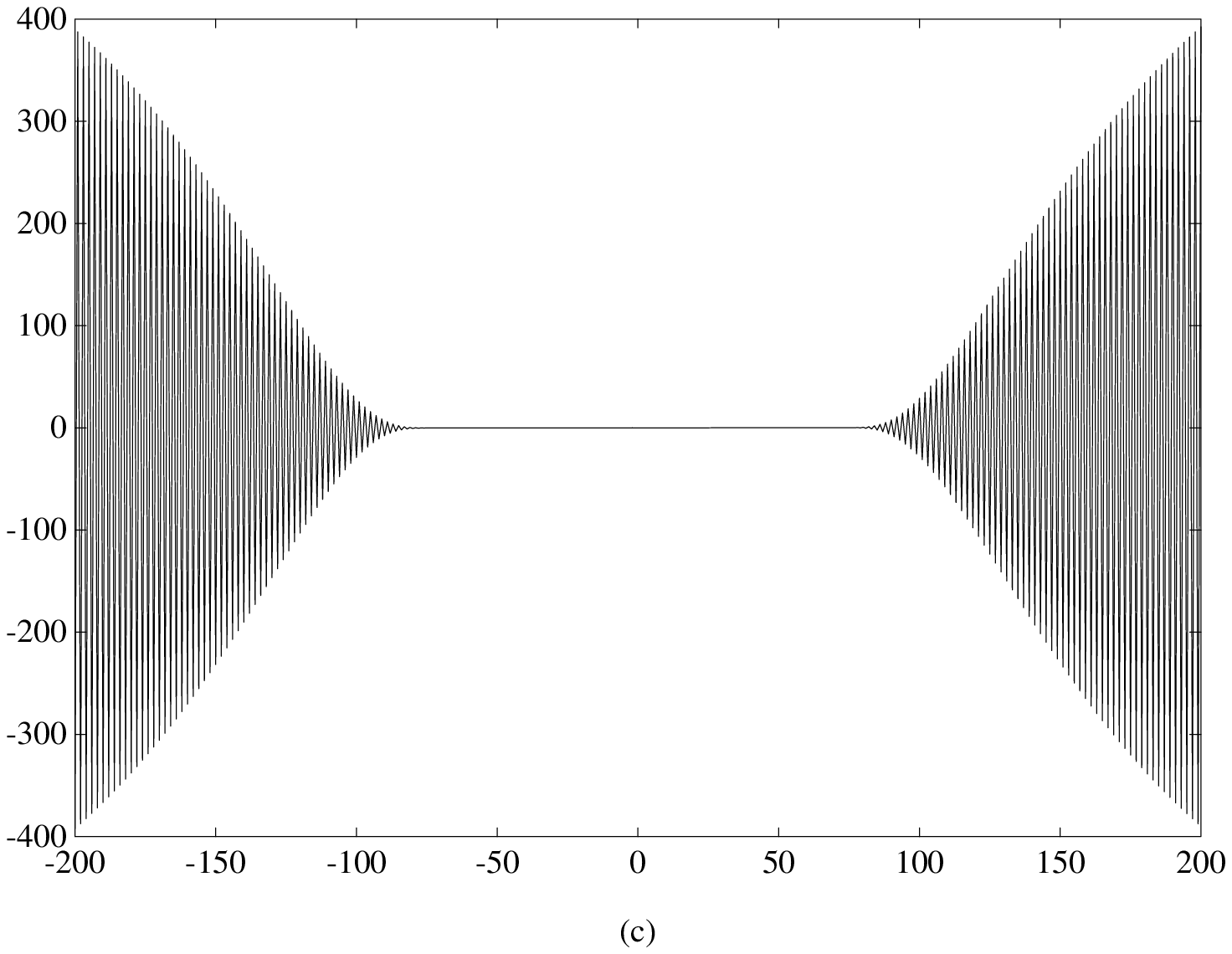}
\end{tabular}
\end{center} 
\caption{ $b_k(t)=\Im\widehat{u}_{k}(t)$: Fourier method subject to $\hat{u}_{k}(0)=x_{k}^{3}(\pi-x_{k})^{3}/20$.\newline (a) $(t,N)=(0.1, 200)$; \  
(b) $(t,N)=(1., 100)$; \ (c) $(t,N)=(1., 200)$.}\label{fig:a}
\end{figure}

\begin{figure}[ht]
\begin{center}
\begin{tabular}{ccc}
\includegraphics[scale=0.3]{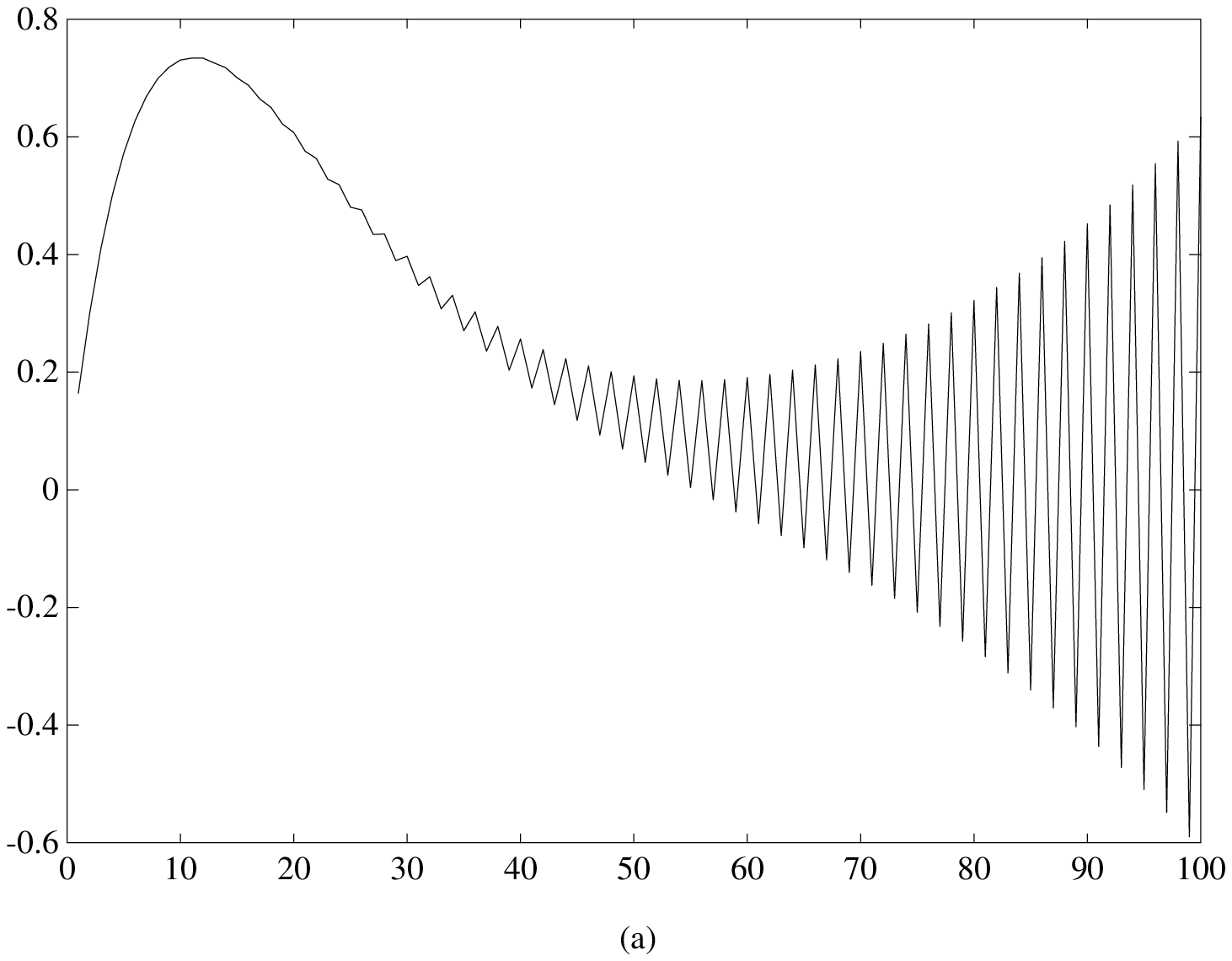} &
\includegraphics[scale=0.3]{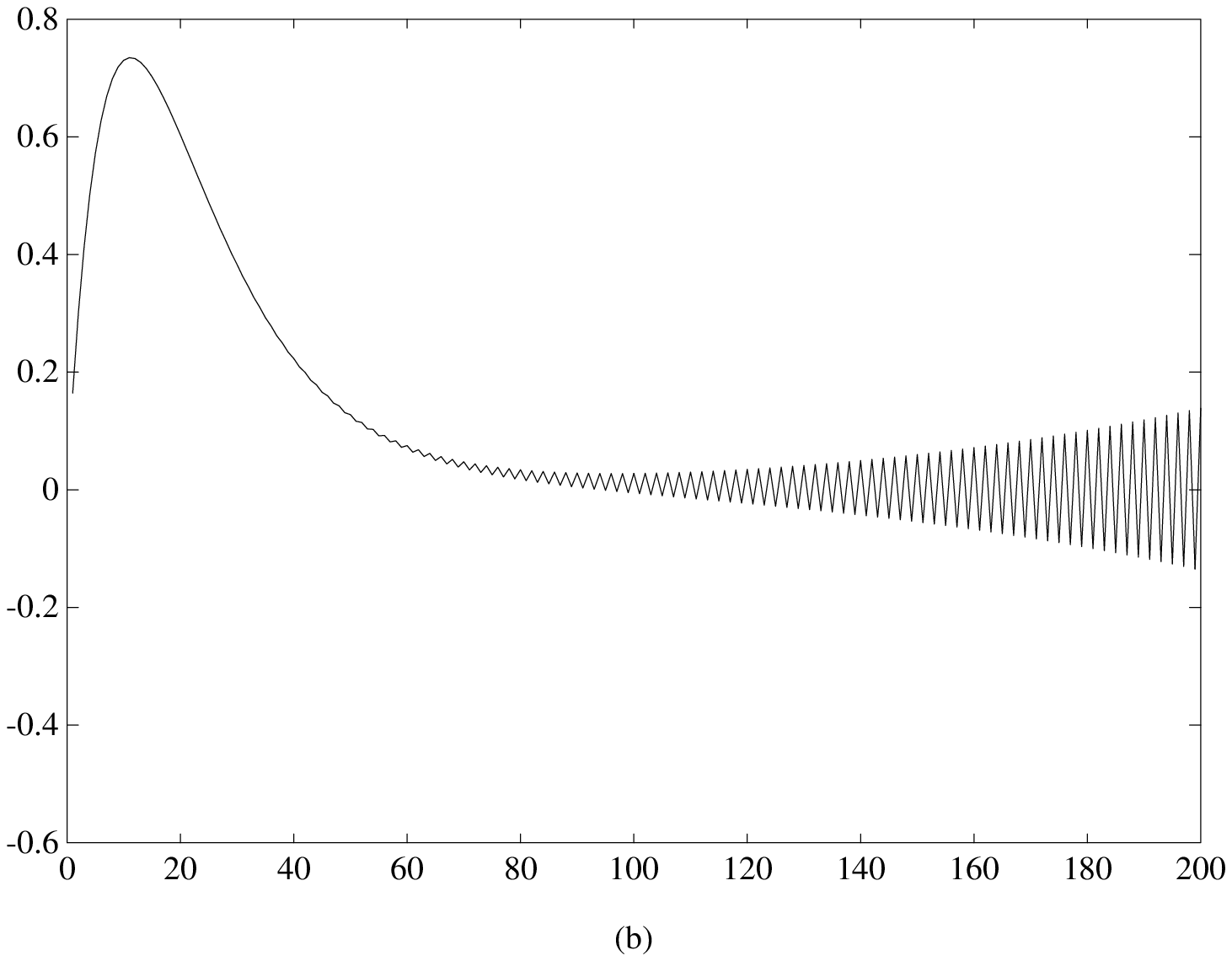} &
\includegraphics[scale=0.3]{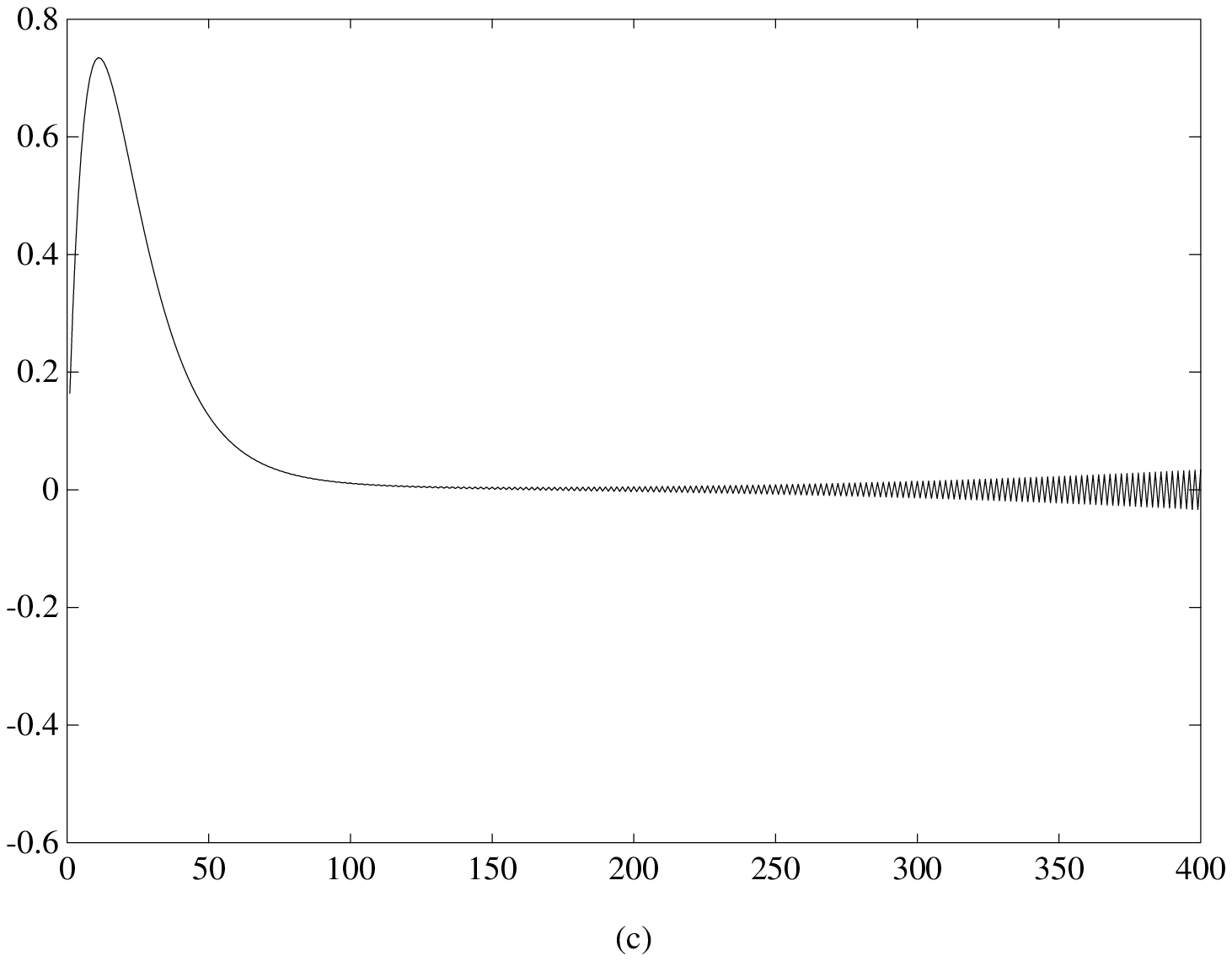} 
\end{tabular}
\end{center}
\caption{$\Im\hat{u}_{k}(t)$ computed at $t=3$: Fourier solution of $u_t=(\sin(x)u)_{x}, \quad
\widehat{u}_{k}(0)\sim{k^{-3}}$. 
 (a) with $N=100$; \
(b) with $N=200$; \
(c) with $N=800$: $b_N(t) \sim \pm  N^{-2}$.}\label{fig:b}  
\end{figure}

In practice, however, even if the solution of (\ref{eq:pde}) remains smooth,  it develops large gradients of order $|\widehat{\bf u}(t)| \sim exp(q'_\infty t)$ when $q(x)$ changes sign. These large gradients require $N\gg e^t$ modes in order to resolve these gradients; otherwise, the exact solution $u(\cdot,t)$ remains  under-resolved by the pseudo-spectral Fourier approximation. Observe that the Fourier method requires an increasingly large number of modes before it can resolve the  underlying solution, $u(\cdot,t)$. Without it, the  under-resolved Fourier approximation contains ${\mathcal O}(1)$ high modes, $|\widehat{u}_N(t)| \sim |b_N(t)|$, amplified by a factor of order ${\mathcal O}(N)$, yielding the spurious oscillations noticeable in Figure \ref{fig:c}.
Thus, aliasing errors cause the Fourier solution to grow due to  \emph{lack of resolution}. The precise growth is ``encoded'' in the Fourier equations whose imaginary part is governed by (\ref{eq:pspb}).  To this end,  set
 $v_{k}(t):=(-1)^{k}b_{k}(t)$. Then (\ref{eq:pspb}) is converted into
\[
\frac{d}{dt}  v_{k}(t) = 
x_{k} \frac{v_{k+1}(t)-v_{k-1}(t)}{2\Delta x}, 
 \quad   v_{N+1}(t)=v_{N}(t), \ \ x_{k}:=k\Delta x \in [0,1], \ \Delta x:=h/\pi.
\]
This can be viewed as an approximation to the linear equation
$  \del_t v(x,t) = x \del_x v(x,t), \ 0 \leq x \leq 1$ augmented with the boundary condition $\del_x v(1,t)=0$. This is an \emph{ill-posed} problem due to the extrapolation at the inflow boundary $x=1$,  and consequently, its numerical approximation grows $\|b_N(t)\|=\|v_N(t)\| \sim \sqrt{N}$, e.g., \cite{Tad83}.
The detailed analysis carried out in \cite{GHT94} shows that there is a \emph{weak} instability, where $\sim 1-e^{-t}$ fraction of the highest modes experience amplification of order ${\mathcal O}(N)$, which ends with the stability estimate

\[
\|\uN(\cdot,t)\|_{L^2} \lesssim e^{C q'_\infty t} N \|\uN(\cdot,0)\|_{L^2}.
\]
The corresponding  error estimate for the pseudo-spectral approximation reads \cite[theorem 4.1]{GHT94}
\[
\|\uN(\cdot,t)-\IN u(\cdot,t)\|_{L^2} \lesssim e^{{C_s}q'_\infty t}\left(N^{1-s}\|u(\cdot,0)\|_{H^s}  + N^{2-s}\max_{\tau\leq t}\|u(\cdot,\tau)\|_{H^s}\right), \qquad s>2,
\]
reflecting the  loss of power on $N$ when compared with the spectral estimate (\ref{eq:rate}).

\begin{figure}[ht]
\begin{center}
\begin{tabular}{ccc}
\includegraphics[scale=0.3]{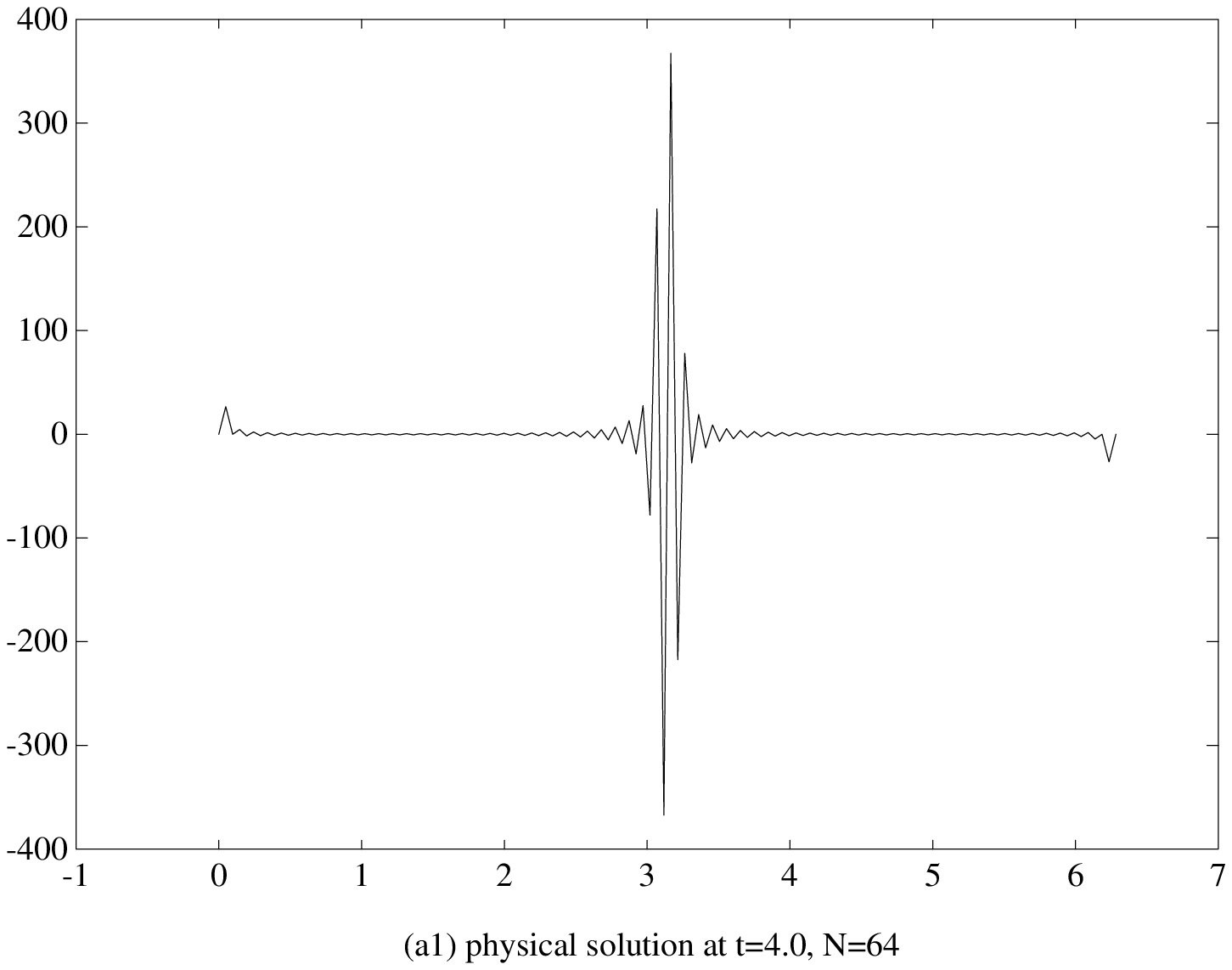}& \includegraphics[scale=0.3]{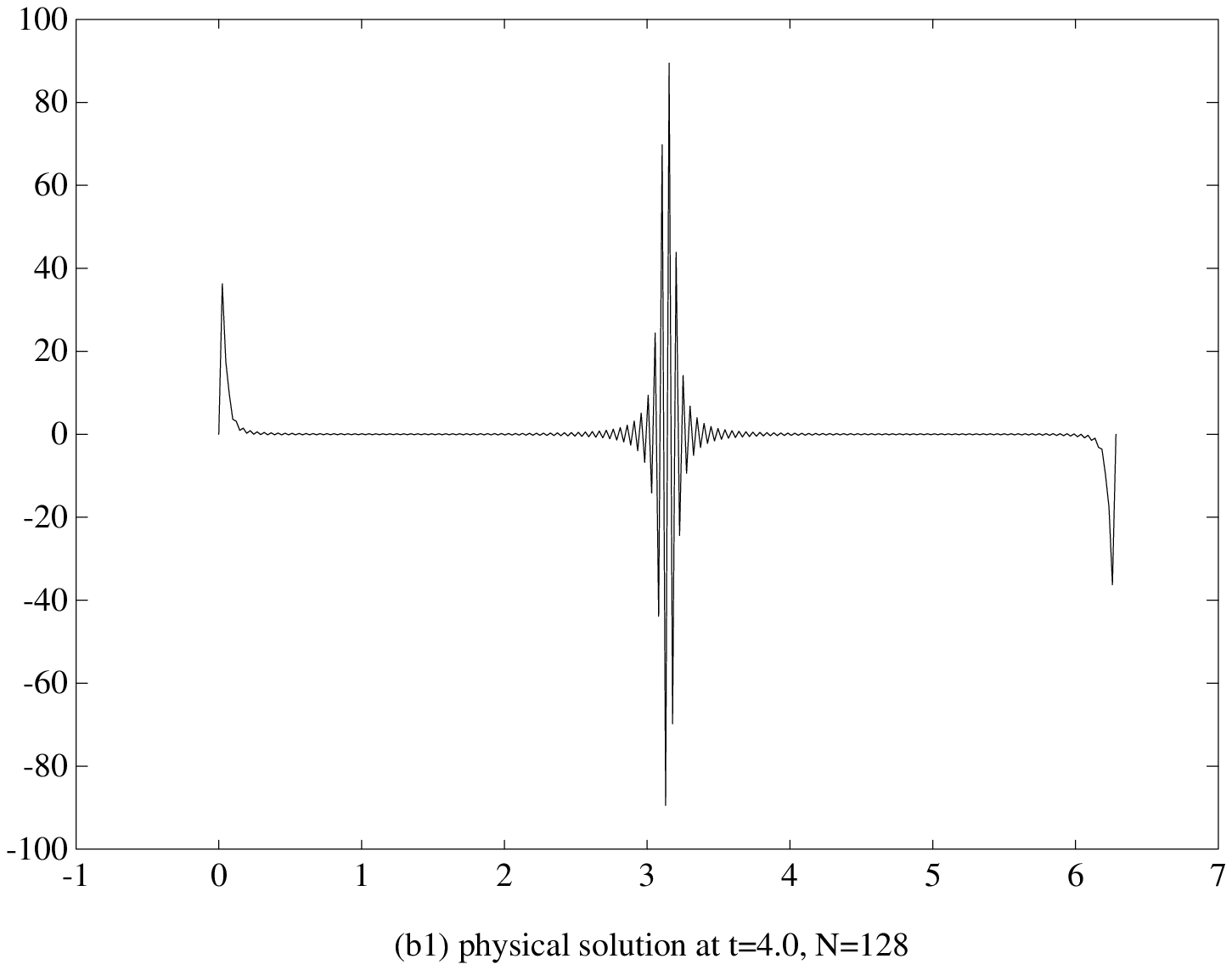}& \includegraphics[scale=0.3]{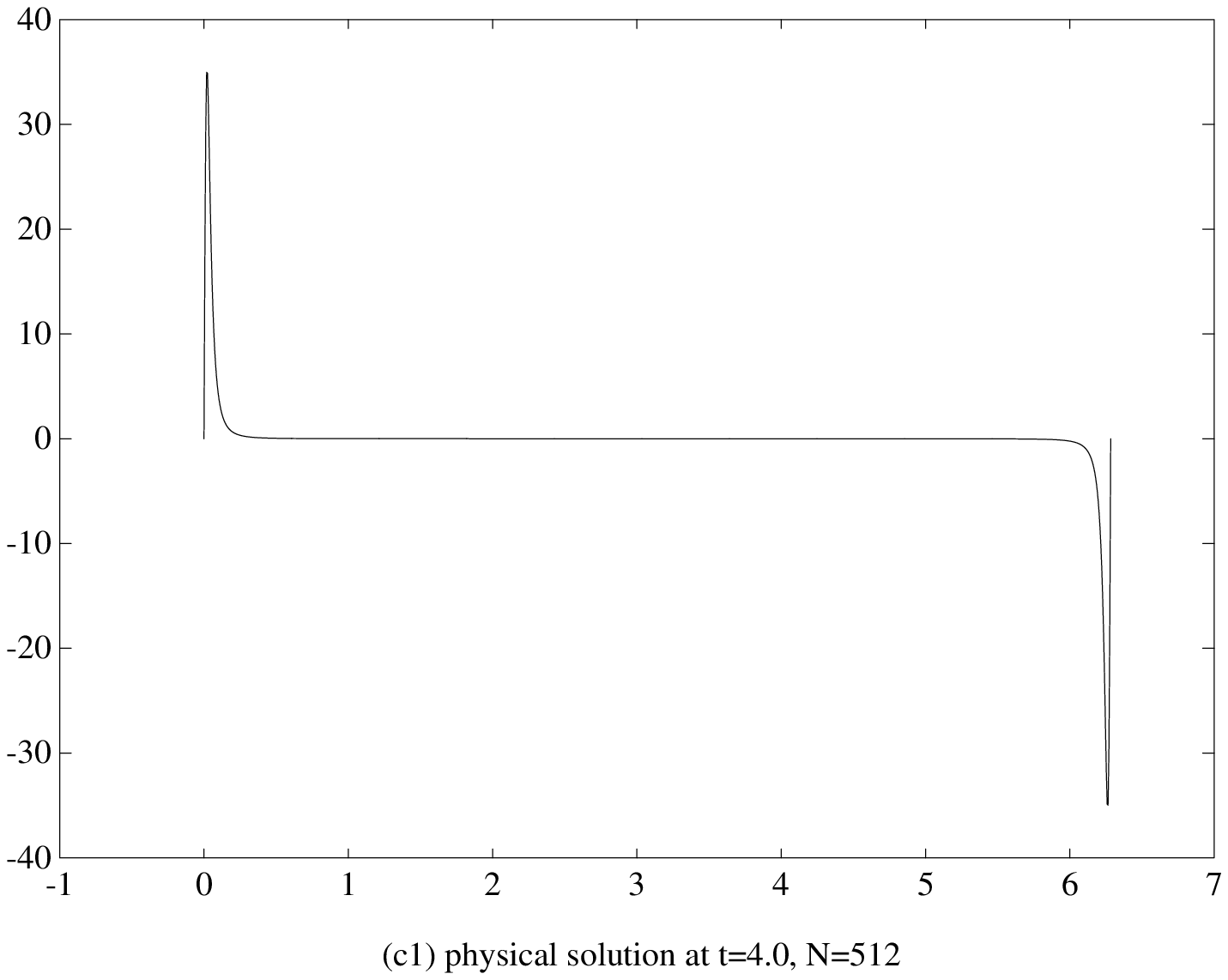}\\
\includegraphics[scale=0.3]{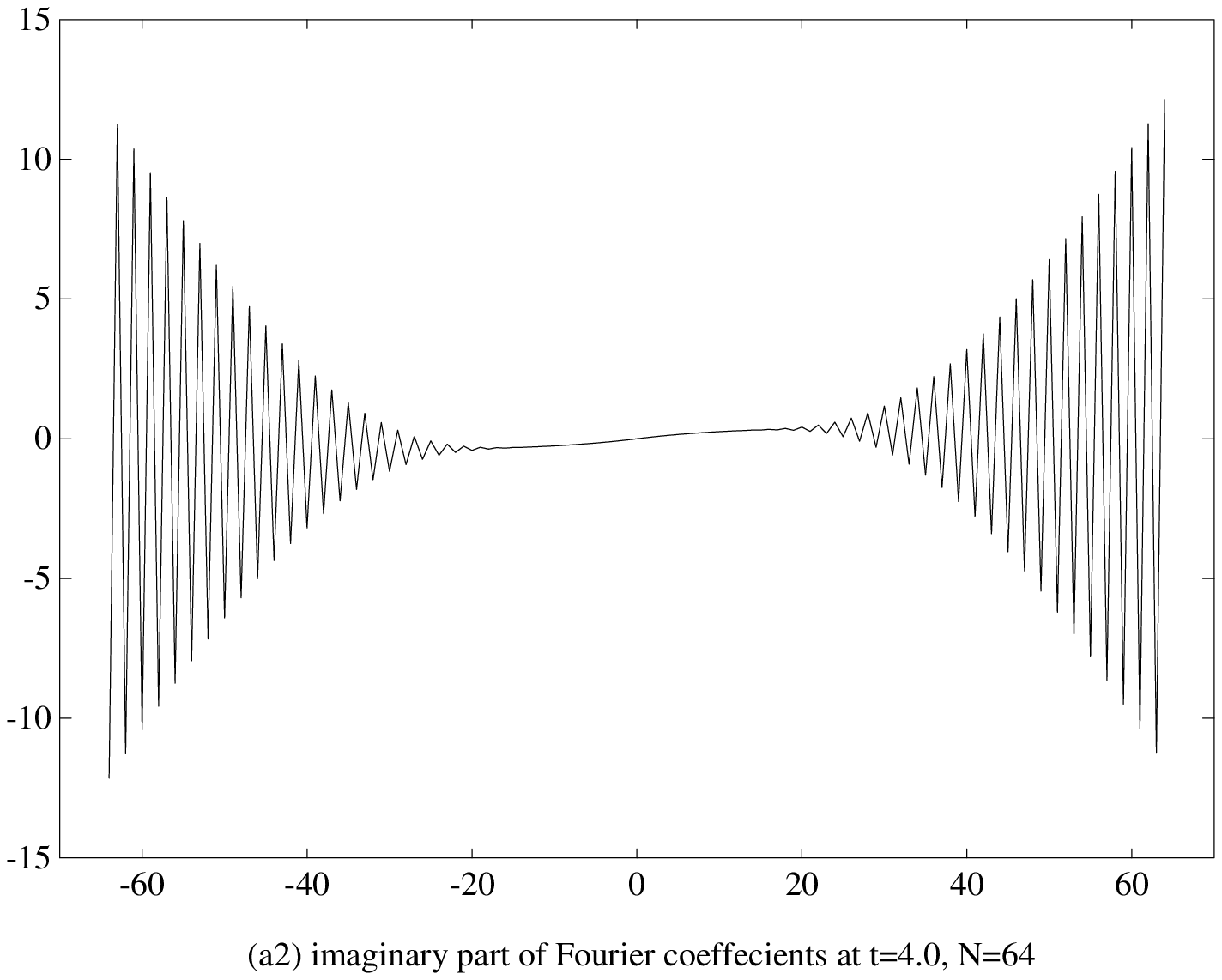}& \includegraphics[scale=0.3]{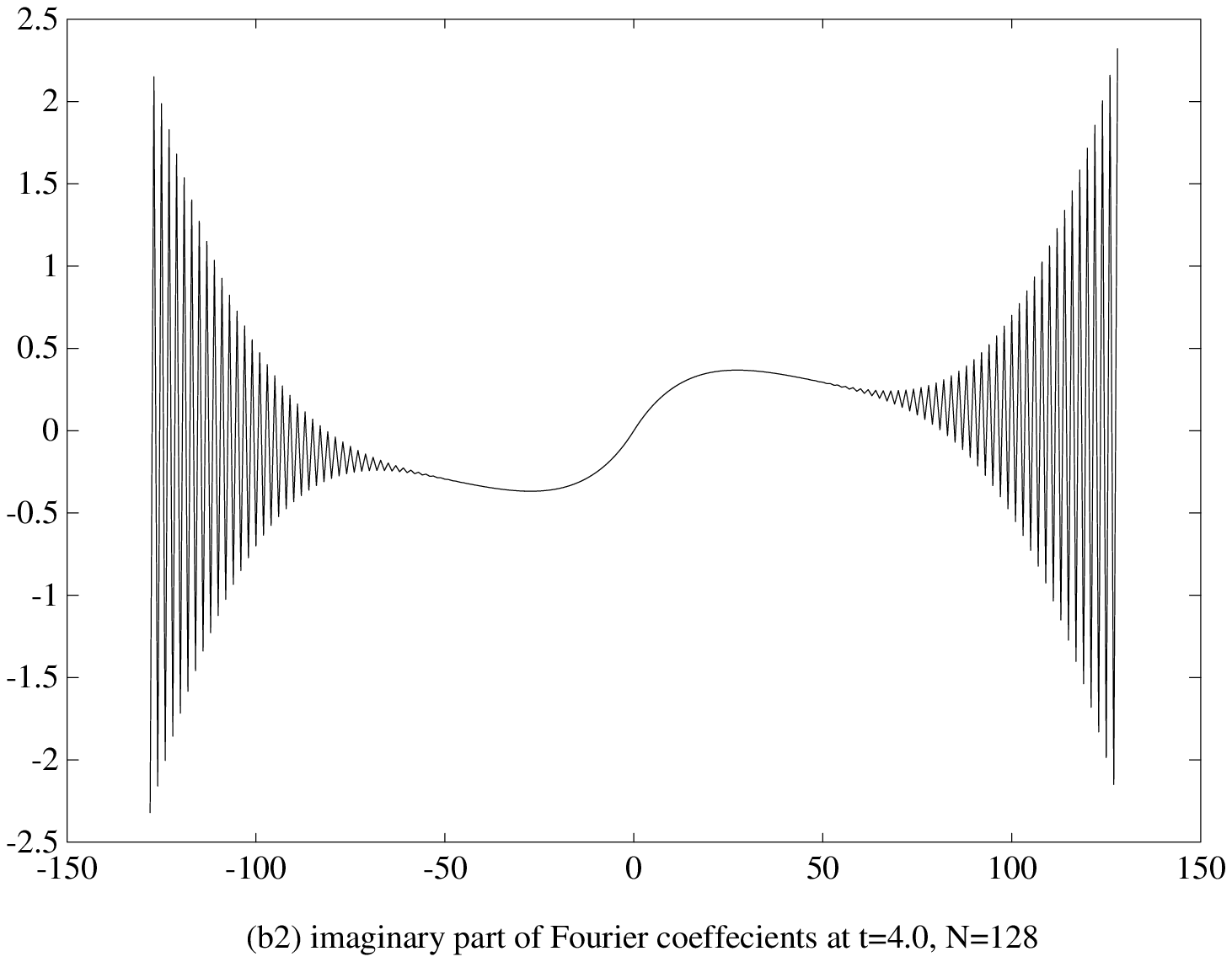}& \includegraphics[scale=0.3]{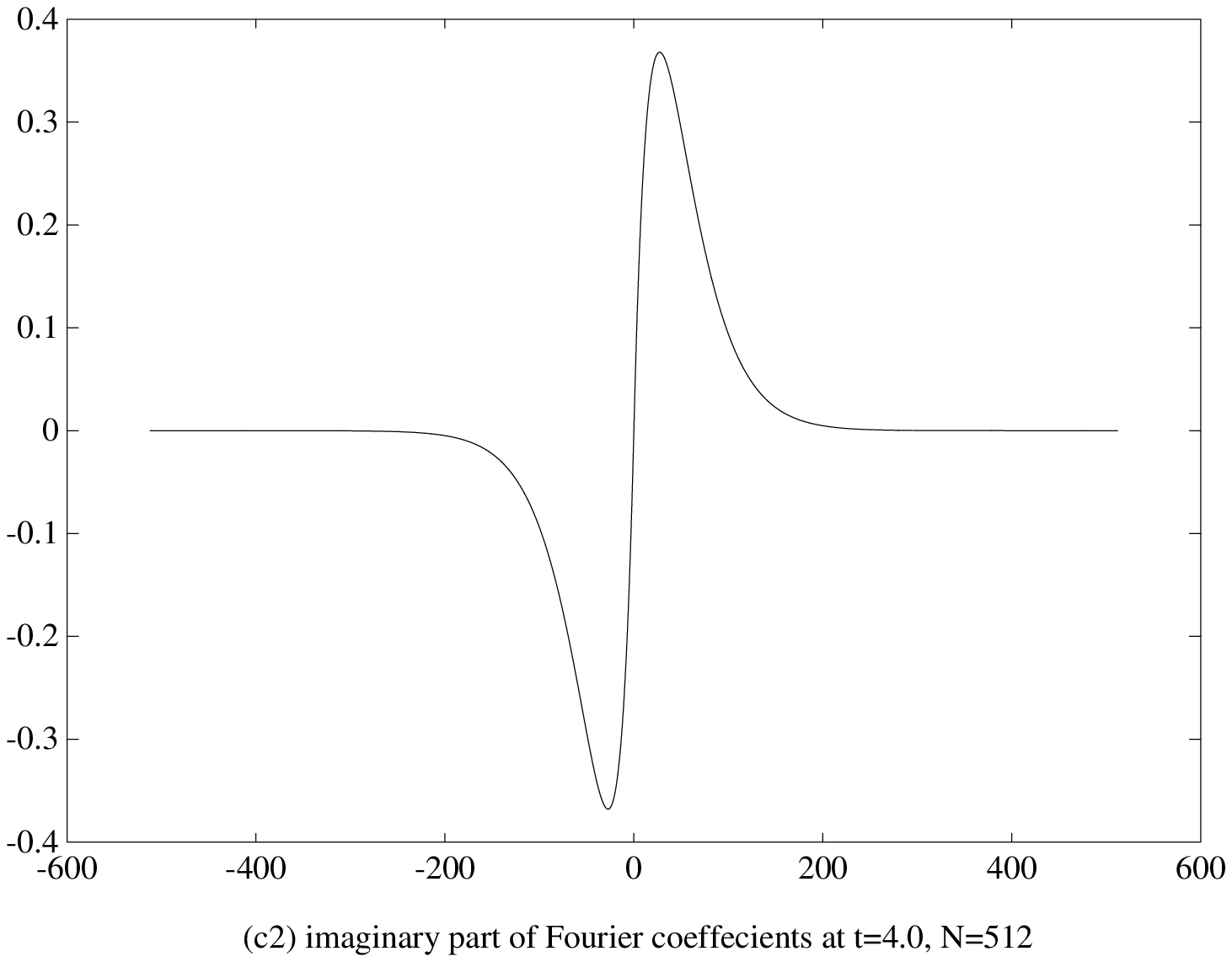}
\end{tabular}
\end{center}
\caption{ $\Im\hat{u}_{k}(t=0.4)$ of $u_N(\cdot,t)$ with $N=64$, $N=128$ and $N=512$ modes.}\label{fig:c} 
\end{figure}

\subsection{De-aliasing: the $2/3$ smoothing method and strong stability}
One way to regain the stability of the pseudo-spectral Fourier method in (\ref{eq:pspsin}) is to set $\widehat{u}_N(t)\equiv 0$ which  prevents the  growth of $b_N(t)\equiv 0$.  Thus, removing the last mode stabilize the pseudospectral method in the special case of $q(x)=\sin(x)$. The hyperbolic equation (\ref{eq:pde}) with a general $q(x)$ follows along the lines of \cite{Tad87}. We return to the aliasing term (\ref{eq:semi_aliasing}),
\begin{eqnarray}\label{eq:alia}
\lefteqn{\int \uN(x,t)\frac{\partial}{\partial x}{\AN}[q(\cdot)\uN](x,t)dx} \\
& & \qquad \qquad   = 2\pi i\sum_{|j|,|k|\leq N}
\overline{\widehat{u}}_j(t) \widehat{u}_{k}(t) \,(j-k)\cdot\sum_{{\aj\neq 0}}\widehat{q}\left({j-k+\aj(2N+1)}\right). \nonumber
\end{eqnarray}
As noted before, the ${\mathcal O}(N)$ growth of the high  Fourier modes, $|\widehat{u}_j(t)|, \ |j|\sim N$,  need not be small due to lack of apriori smoothness. We can circumvent this difficulty if we remove these modes by setting a fixed portion of the spectrum to be zero. For example, assume that we truncate the last $1/3$ of the modes of $\uN$ (any other fixed fraction of $N$ will do). To this end, we use a \emph{smoothing operator} $\SR$ which is activated only on the first $\tthirds N$ modes while removing the top $1/3 N$ of the modes. We up with the so-called $2/3$ pseudo-spectral Fourier method, 
\begin{subequations}\label{eqs:pspsmoo}
\begin{equation}\label{eq:pspsmoo}
 \frac{\partial}{\partial t}\uN(x,t)+\frac{\partial}{\partial x}\IN[q(\cdot)\SR\uN](x,t)=0, \qquad  \SR\uN:= \sum_{|k|\leq \tthirds N}\sigma_k\widehat{u}_k(t)e^{ikx};
\end{equation}
To retain spectral accuracy, the smoothing factors $\sigma_k \in (0,1]$ do not change  a fixed portion of the lower spectrum 
\begin{equation}\label{eq:pspsmoob}
\sigma_k \left\{\begin{array}{ll} \equiv 1, & |k|\leq \frac{1}{3}N \\ \\ \in (0,1], & \frac{1}{3}N <|k| <\tthirds N.\end{array}\right.
\end{equation}
\end{subequations}
The $L^2$-stability of the $2/3$ method follows along the lines of the spectral stability in (\ref{eq:spstab}); integrating (\ref{eq:pspsmoo}) against $\SR\uN$ we find
\begin{subequations}\label{eqs:last}
\begin{equation}\label{eq:lasta}
\left|\int (\SR\uN)\frac{\partial}{\partial x}\SN[q(\cdot)\SR\uN]dx\right| \leq \hf q'_\infty \|\SR\uN(\cdot,t)\|^2_{L^2}, \qquad q'_\infty= \max_x|q'(x)|. 
\end{equation}
The  aliasing contribution in the $2/3$ method is bounded (and in fact negligible for $q\in C^s, s>1$):  following (\ref{eq:alia}) 
\[
\int (\SR\uN)\frac{\partial}{\partial x} {A_N}[q(\cdot)(\SR\uN)]dx 
 = 2\pi i\!\!\!\sum_{|j|\leq \tthirds N}\sum_{|k|\leq \tthirds N}\sigma_k\sigma_j
\overline{\widehat{u}}_j(t) \widehat{u}_{k}(t)\, (j-k)\cdot\sum_{{\aj\neq 0}}\widehat{q}\overbrace{\left({j-k+\aj(2N+1)}\right)}^{|{j-k+\aj(2N+1)}|\geq \tthirds N};
\]
observe that the  terms involved in the inner summation on the right  are now restricted to high wavenumbers, $|j-k+\aj(2N+1)|\geq \tthirds N$ so that $|\widehat{q}\left({j-k+\aj(2N+1)}\right)| \lesssim \|q\|_{C^r}N^{-r}$.  Hence
\begin{equation}\label{eq:lastb}
\left| \int (\SR\uN)\frac{\partial}{\partial x} {A_N}[q(\cdot)(\SR\uN)]dx \right|
 \lesssim  \|q\|_{C^r}N^{1-r} \times \|\SR\uN\|^2, \qquad r\geq 1.
\end{equation}
\end{subequations}
 Combining the last two inequalities (\ref{eq:lasta}) and (\ref{eq:lastb}) with $r=1$, we arrive at
\[
\hf\frac{d}{dt}\int(\SR\uN)(x,t)\uN(x,t)dx = -\int (\SR\uN)\ddx (\SN+A_N)[q(\cdot)(\SR\uN)]dx   \leq C q'_\infty \|\SR\uN(\cdot,t)\|^2_{L^2}.
\]
Thus, by activating the smoothing operator we removed aliasing errors and  the resulting $2/3$ de-aliased pseudo-spectral method (\ref{eqs:pspsmoo}) regained the  \emph{weighted} $L^2$-stability
\[
\|\uN(\cdot,t)\|^2_{L^2_{\SR}} \leq e^{2C q'_\infty t}\|\uN(\cdot,0)\|^2_{L^2_{\SR}}, \qquad
\|w(\cdot,t)\|^2_{L^2_{\SR}}:=\int (\SR w)(x,t)w(x,t)dx \equiv 2\pi\!\!\!\sum_{|k|\leq \tthirds N} \sigma_k|\widehat{w}_k(t)|^2.
\]
 The corresponding error equation for $e_N:=\SR\uN-\SR u$ reads (we skip the details)
\[
\ddt e_N + \ddx \big(\SR[q e_N]\big) = -\ddx \SR \big[q(x)(I-\SR)[u](x,t)\big],
\]
and the spectral convergence rate, (\ref{eq:rate}), follows: for $s>1$ there exists a constant, $C=C_s$ such that 
\[
\|\uN-u\|_{L_{\SR}^2} \lesssim e^{C_s q'_\infty t}\left(N^{-s}\|u(\cdot,0)\|_{H^s}  + N^{1-s}\max_{\tau\leq t}\|u(\cdot,\tau)\|_{H^s}\right), \quad s>1.
\]

\subsection{Spectral accuracy and propagation of discontinuities}\label{sec:disc}
Hyperbolic equations propagates $H^s$ regularity: 
$ \|u(\cdot,t)\|_{H^s}  \lesssim  {e^{C_s t}} \|u(\cdot,0)\|_{H^s} <\infty$. Thus, the convergence statement in (\ref{eq:rate}) implies spectral convergence of the spectral Fourier method and $2/3$ Fourier method
for $H^s$-smooth initial data. However, when the initial data is \emph{piecewise smooth}, the exact solution propagates discontinuities along characteristics, and  the (pseudo-)spectral approximations of jump discontinuities in $u(\cdot,t)$ produces spurious Gibbs oscillations, \cite{Tad07}.
Nevertheless, thanks to the $H^s$-stability of the spectral Fourier method and the $2/3$ pseudo-spectral methods,
$\|\uN(\cdot,t)\|_{\dot{H}^s} \lesssim e^{{C_s}|q|_\infty t}\|\uN(\cdot,0)\|_{\dot{H}^s}$,  measured in the \emph{weak} topology of $s<0$, the (pseudo-)spectral approximations still propagate accurate information of the smooth portions of the exact solution. This is realized in terms of the convergence rate (we skip the details)
\[
\|\uN-u\|_{H^{r}}\lesssim e^{C_s q'_\infty t}\left(N^{r-s}\|u(\cdot,0)\|_{H^{s}}  + N^{1+r-s}\max_{\tau\leq t}\|u(\cdot,\tau)\|_{H^{s}}\right), \qquad r < s-1 <-1.
\]
It follows that one can pre- and post-process $\uN(\cdot,t)$ to recover the \emph{pointvalues} of $u(\cdot,t)$ within spectral accuracy, away from the singular set of the solution, \cite{MMO78,ML78,AGT86}. The point to note here is that even the Fourier projections of the exact solution, $\SN u(\cdot,t)$ and $\IN u(\cdot,t)$ are at most first-order accurate due to Gibbs oscillations; the post-processing of the computed $\uN$ is realized by its smoothing using a proper $\sigma$-mollifier (\ref{eq:pspsmoob}) (or see (\ref{eq:mollifier}) below), which does both --- retains the stability and recovers the spectrally accurate resolution content of  the Fourier method.   

\section{Fourier method for Burgers equation: convergence for smooth solutions}\label{sec:burgers}
We now turn our attention to spectral and pseudo-spectral approximations of nonlinear problems.
Their spectral accuracy often make them the method of choice for simulations where the highest resolution is sought for a given number of degrees of freedom. 
We begin with the prototypical example for quadratic nonlinearities, the inviscid Bugers' equation, 
\begin{equation}\label{eq:burgers}
\frac{\partial}{\partial t}u(x,t) + \hf\frac{\partial}{\partial x}u^2(x,t)=0, \qquad x\in {\mathbb T}([0,2\pi)),
\end{equation}
subject to $\tpi$-periodic boundary conditions and prescribed 
initial conditions, $u(x,0)$. In this section we show that as long as the solution of Burgers equation remains smooth for a time interval $t \leq T_c$, the spectral and $2/3$ de-aliased pseudo-spectral approximations  converge to the exact solution with spectral accuracy. 

\subsection{The spectral approximation of Burgers equation}\label{sec:sp-quad-stability}
The spectral approximation of (\ref{eq:burgers}), $\uN(x,t)=\sum \widehat{u}_k(t)e^{ikx}$, is governed by,
\begin{equation}\label{eq:spburg}
\frac{\partial}{\partial t}\uN(x,t)+\frac{1}{2}\frac{\partial}{\partial x}\Big( \SN\left[\uN^2\right](x,t) \Big)=0, \qquad \pil\leq x \leq \pir.
\end{equation}
The evaluation of the quadratic term  on the right is carried out using convolution and (\ref{eq:spburg}) amounts to a nonlinear system of $(2N+1)$ ODEs for $\widehat{\mathbf u}(t)=(\widehat{u}_{-N}(t),\ldots, \widehat{u}_N(t))^\top$.

\begin{theorem}[{\bf Spectral convergence for smooth solutions of Burgers' equations}]\label{thm:spsmooth} Assume that  for $0<t\leq T_c$, the solution of the Burgers equation \eqref{eq:pde} is smooth, $u(\cdot,t)\in L^\infty\left([0,T_c], C^{1+\alpha}(\pil,\pir]\right)$. Then, the spectral method \eqref{eq:spburg}  converges in 
$L^\infty\left([0,T_c], L^2(0,2\pi]\right)$, 
\[
\|\uN(\cdot,t)-u(\cdot,t)\|_{L^2} \rightarrow 0, \qquad  0\leq t \leq T_c.
\]
Moreover, the following spectral convergence rate estimate holds for all $s>\threehf$,
\[
\|\uN(\cdot,t)-u(\cdot,t)\|^2_{L^2} \lesssim e^{{\displaystyle \int_0^t \!\!|u_x(\cdot,\tau)|_\infty d\tau}} \left(N^{-2s}\|u(\cdot,0)\|^2_{H^s} + N^{\threehf-s}\max_{\tau\leq t}\|u(\cdot,\tau)\|_{H^s}\right)\!, \  s>\threehf.
\] 
\end{theorem}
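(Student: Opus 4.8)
The plan is to mimic the linear energy argument of \S2.1, now tracking the quadratic interactions. I would set $e_N:=\uN-\SN u$ and $\eta:=(I-\SN)[u]$, so that $\uN=u+(e_N-\eta)$. Applying $\SN$ to \eqref{eq:burgers} and subtracting \eqref{eq:spburg} gives the error equation
\[
\partial_t e_N + \hf\partial_x\SN\big[2u(e_N-\eta)+(e_N-\eta)^2\big]=0,
\]
since $\uN^2-u^2=2u(e_N-\eta)+(e_N-\eta)^2$. The first step is to pair this with $e_N$ in $L^2$; because $e_N$ is a trigonometric polynomial of degree $N$ and $\SN$ is a self-adjoint projection commuting with $\partial_x$, the outer $\SN$ may be dropped against $e_N$, reducing every term to $\int e_N\partial_x[\,\cdot\,]\,dx$.

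Then I would sort the resulting contributions into three types. The principal one, $-\int e_N\partial_x(ue_N)\,dx=-\hf\int u_x e_N^2\,dx$, is the genuine stability term, bounded by $\hf\|u_x\|_{L^\infty}\|e_N\|_{L^2}^2$; it produces the exponential factor $e^{\int_0^t|u_x|_\infty\,d\tau}$. The pure cubic self-interaction vanishes, $\int e_N\partial_x(e_N^2)\,dx=\tthirds\int\partial_x(e_N^3)\,dx=0$, which is the crucial cancellation removing any genuinely nonlinear growth. What remains are the truncation/interaction terms built from $\eta$: a term $\propto\int e_N\partial_x(u\eta)\,dx$, a term $\propto\int e_N^2\partial_x\eta\,dx$, and a harmless $\int(\partial_x e_N)\eta^2\,dx$.

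The main obstacle is the middle term $\int e_N^2\partial_x\eta\,dx$: it is quadratic in $e_N$ but its coefficient $\partial_x\eta=(I-\SN)u_x$ is only spectrally small, not negligible, so it cannot simply be absorbed into the $\|u_x\|_\infty$ Gronwall coefficient. Here I would exploit the exact $L^2$--conservation of the scheme --- since $\int\uN\partial_x\SN[\uN^2]\,dx=\int\uN\partial_x\uN^2\,dx=0$, one has $\|\uN(\cdot,t)\|_{L^2}=\|\uN(\cdot,0)\|_{L^2}$ and hence an a priori bound $\|e_N(\cdot,t)\|_{L^2}\le C$. Using this to control \emph{one} factor, I would treat the whole term as forcing, $\big|\int e_N^2\partial_x\eta\,dx\big|\le C^2\|(I-\SN)u_x\|_{L^\infty}$. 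The estimate \eqref{eq:maxHs} with $d=1$ gives $\|(I-\SN)u_x\|_{L^\infty}\lesssim N^{\threehf-s}\|u\|_{H^s}$ for $s>\threehf$, which is exactly the source of the $N^{\threehf-s}\|u\|_{H^s}$ contribution and of the threshold $s>\threehf$; the other two interaction terms are genuinely lower order.

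Finally I would collect everything into $\frac{d}{dt}\|e_N\|^2\le \|u_x\|_{L^\infty}\|e_N\|^2 + C N^{\threehf-s}\|u\|_{H^s} + (\text{l.o.t.})$, apply Gronwall, and combine with $\|e_N(\cdot,0)\|^2\lesssim N^{-2s}\|u(\cdot,0)\|_{H^s}^2$ and the triangle inequality $\|\uN-u\|\le\|e_N\|+\|\eta\|$ (with $\|\eta\|\lesssim N^{-s}\|u\|_{H^s}$) to reach the stated rate. For the bare convergence claim under only $C^{1+\alpha}$ regularity, I would replace \eqref{eq:maxHs} by a Jackson-type bound $\|(I-\SN)u_x\|_{L^\infty}\lesssim N^{-\alpha}\log N\,\|u_x\|_{C^\alpha}\to 0$, so that the $C^{1+\alpha}$ hypothesis is precisely what forces the leading interaction term to vanish as $N\to\infty$.
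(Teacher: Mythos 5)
Your proposal is correct, and it rests on exactly the two mechanisms that drive the paper's proof: the exact $L^2$-conservation of the Galerkin scheme \eqref{eq:spL2consv}, used to furnish a uniform a priori $L^2$ bound on the quadratic factor multiplying the non-absorbable term, and the $L^\infty$-smallness of $(I-\SN)u_x$ --- Jackson's bound $\log N\,N^{-\alpha}$ under the $C^{1+\alpha}$ hypothesis, and \eqref{eq:maxHs} (applied to $u_x\in H^{s-1}$, whence the threshold $s>\threehf$ and the rate $N^{\threehf-s}$) for the quantitative estimate. The bookkeeping, however, is genuinely different. The paper never introduces $\SN u$ or an error equation: it integrates the pointwise identity $(\uN-u)^2=\uN^2-u^2-2u(\uN-u)$, kills the $\uN^2$ contribution by conservation, and converts the time derivatives in $\int\partial_t\big(u(\uN-u)\big)dx$ into spatial ones using both equations; this yields precisely two terms, the Gronwall term $\frac12\int u_x(\uN-u)^2dx$ and the single error term $\int\uN^2(I-\SN)[u_x]dx$ of \eqref{eq:gronb}, after which the estimate for $\|\uN-u\|_{L^2}$ is immediate. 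Your Galerkin-style comparison with $e_N=\uN-\SN u$ reproduces the same two leading terms (your principal term and your $\int e_N^2\,\partial_x\eta\,dx$), but at the cost of the extra cross terms $\int e_N\partial_x(u\eta)\,dx$ and $\int(\partial_x e_N)\eta^2dx$, which you correctly flag as lower order --- they do close, e.g.\ by Bernstein's inequality $\|\partial_x e_N\|_{L^2}\leq N\|e_N\|_{L^2}$ together with $\|\eta\|_{L^2}\lesssim N^{-s}\|u\|_{H^s}$, giving contributions of size $N^{1-s}$ and $N^{\threehf-2s}$, both dominated by $N^{\threehf-s}$ --- and of a concluding triangle inequality $\|\uN-u\|\leq\|e_N\|+\|\eta\|$. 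Your explicit cubic cancellation $\int e_N\partial_x(e_N^2)dx=0$ plays the role of the paper's discarded flux term $\partial_x(\uN^3/6)$. What your route buys is the standard, transferable structure of Galerkin error analysis (an error equation that would survive replacing $\SN$ by other projections); what the paper's identity buys is economy --- no cross terms to estimate and the bound on $\|\uN-u\|_{L^2}$ obtained directly.
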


\begin{proof}
We rewrite  the spectral approximation (\ref{eq:burgers})  in the form
\[
\ddt \uN + \ddx \frac{\uN^2}2=\frac12\ddx(I-\SN)[\uN^2].
\]
The corresponding energy equation reads
\begin{equation}\label{eq:spener}
\ddt  \frac{\uN^2}{2} +\ddx \frac{\uN^3}{6}= \frac{\uN}{2}\ddx(I-\SN)[\uN^2].
\end{equation}
Integration yields the energy balance
\[
\frac12\dt\int \uN^2(x,t)dx = \frac12\int \uN\del_x(I-\SN)[\uN^2]dx =: {\mathcal I}_1.
\]
The  term on the right vanishes by  orthogonality, $\displaystyle {\mathcal I}_1=-\frac12\int\frac{\partial \uN}{\partial x} (I-\SN)[\uN^2]dx =0$, and hence the solution is $L^2$-conservative, 
\begin{equation}\label{eq:spL2consv}
\|\uN(\cdot,t)\|_{L^2}= \|\uN(\cdot,0)\|_{L^2}.
\end{equation}

\medskip
Next, we integrate $ (\uN-u)^2\equiv{|\uN|^2}-{|u|^2} - 2u(\uN-u)$: after discarding all terms which are in divergence form, we are left with
\begin{eqnarray*}
\frac12\dt\int  (\uN-u)^2dx&=& \dt\int \Bigg(\frac{|\uN|^2}2-\frac{|u|^2}2- u(\uN-u)\Bigg)dx\\
&= &\frac12\int\uN\del_x(I-\SN)[\uN^2]dx -\int \del_t(u(\uN-u))dx =:{\mathcal I}_1+{\mathcal  I}_2.
\end{eqnarray*}
Recall that ${\mathcal I}_1$ vanishes. As for  the second term ${\mathcal I}_2$, we decompose it into two terms, 
\begin{equation*}
{\mathcal I}_2=\int \del_t\big(u(\uN-u)\big)dx \equiv \int u_t (\uN-u)dx+\int  u (\del_t  \uN-\del_t u)dx,
\end{equation*}
and using (\ref{eq:burgers}), (\ref{eq:spburg}) and (\ref{eq:spener}) to convert time derivatives to spatial ones, we find
\begin{eqnarray*}
{\mathcal I}_2&= &-\int u  u_x (\uN-u)dx - \int  u \del_{x}\!\!\left(\frac{\uN^2}2-\frac{u^2}2\right)dx +\frac12\int  u\del_x (I-\SN) [\uN^2]dx\\
&=& -\int u  u_x (\uN-u)dx + \int  u_x\left(\frac{\uN^2}2- \frac{u^2}2\right)dx -\frac12\int  u_x(I-\SN) [\uN^2]dx\\
&=&\int u_x \left( \frac{\uN^2}2-\frac{u^2}2-u (\uN-u)\right)dx-\frac12\int   u_x (I-\SN)[\uN^2]dx.
\end{eqnarray*}
Eventually, we end up with
\begin{subequations}\label{eqs:gron}
\begin{equation}\label{eq:grona}
\frac12\frac{d}{dt}\int |\uN(x,t)-u(x,t)|^2dx \leq \frac{|u_x(\cdot,t)|_{L^\infty}}{2} \int |\uN(x,t)-u(x,t)|^2dx-\frac12 e_N,
\end{equation}
where the error term, $e_N$, is given by
\begin{equation}\label{eq:gronb}
e_N:=\int \uN^2(I-\SN)[u_x]  dx
\end{equation}
\end{subequations}
Observe that under the hypothesis $u_x\in L_t^\infty C^{0,\alpha}_x$, and hence by Jackson's bound \cite{DL93} and the $L^2$-bound (\ref{eq:spL2consv}) one has 
\[
 |e_N(t)|\lesssim \max_x|(I-\SN)[u_x(x,t)]|\cdot\|\uN\|^2_{L^2} \lesssim \frac{\ln N}{N^\alpha}\|\uN(\cdot,0)\|^2_{L^2} \rightarrow  0.
\]
With (\ref{eqs:gron}) one obtains,
\begin{eqnarray*}
\int |\uN(x,t)-u(x,t)|^2dx  & \le & e^{{U'_\infty(t;0)}} \int |\uN(x,0)-u(x,0)|^2dx\\
&  + & \int_0^t e^{{U'_\infty(t;\tau)}} |e_N(\tau)|d\tau, \qquad {U'_\infty(t;\tau):=\int_{s=\tau}^t|u_x(\cdot,s)|_\infty ds}.
\end{eqnarray*}
and convergence follows. Moreover, with $\uN(\cdot,0)=\SN u(\cdot,0)$ we end up with spectral convergence rate estimate
\begin{eqnarray*}
\lefteqn{\int |\uN(x,t)-u(x,t)|^2dx} \\
& & \lesssim e^{\displaystyle {\int_0^t |u_x(\cdot,\tau)|_\infty d\tau}} \left(N^{-2s}\|u(\cdot,0)\|^2_{H^s}+ N^{\threehf-s}\max_{\tau\leq t}\|u(\cdot,\tau)\|_{H^s}\right), \qquad s> \threehf.
\end{eqnarray*}
\end{proof}

\subsection{The $2/3$ de-aliasing pseudo-spectral approximation of Burgers equation}\label{sec:ps-quad-stability}

Convolutions  can be avoided using the pseudo-spectral Fourier method, \cite{KO72,GO77},
\[
\frac{\partial}{\partial t}\uN(x,t)+\frac{1}{2}\frac{\partial}{\partial x}\Big( \IN\left[\uN^2\right](x,t) \Big)=0, \qquad x\in {\mathbb T}([0,2\pi)).
\]
 Observe that (\ref{eq:burgers}) is satisfied \emph{exactly} at the gridpoints $x_\nu$,
\[
\frac{d}{dt}\uN(x_\nu,t)+\frac{1}{2}\frac{\partial}{\partial x}\Big( \IN\left[\uN^2\right](x,t) \Big)_{\big|x=x_\nu}=0, \qquad \nu=0, 1, \ldots, 2N.
\]
The resulting  system of $(2N+1)$ nonlinear equations for ${\bf u}(t)=(u(x_0,t),\ldots,u(x_{2N},t))^\top$  can be then integrated in time by standard ODE solvers. 
 The pseudo-spectral approximation introduces aliasing errors. To eliminate these errors, we consider the $2/3$ de-aliasing Fourier method, consult (\ref{eq:pspsmoo})
\begin{subequations}\label{eqs:fourier}
\begin{equation}\label{eq:fourier}
\frac{\partial}{\partial t}\uN(x,t)+\frac{1}{2}\frac{\partial}{\partial x}\Big( \IN\left[(\SR\uN)^2\right](x,t) \Big)=0, \quad   x\in {\mathbb T}([0,2\pi)),
\end{equation}
where $\SR\uN$ denotes a \emph{smoothing} operator of the form
\begin{equation}\label{eq:fourierb}
 \SR\uN:= \sum_{|k|\leq \tthirds N}\sigma_k\widehat{u}_k(t) e^{ikx}, \qquad \widehat{u}_k(t)=\frac{h}{\tpi}\sum_{\nu=0}^{2N}u_N(x_\nu,t)e^{-ikx_\nu}.
\end{equation}
The smoothing operator $\SR$ is dictated by the smoothing factors, $\{\sigma_k\}_{|k|\leq \tthirds N}$,  which truncates  modes with wavenumbers $|k|>\tthirds N$ while leaving a fixed portion --- say, the first 1/3 of the spectrum, viscous-free.
This is the same smoothing operator $\SR\uN$ we considered already in the \emph{linear} $2/3$ method (\ref{eq:pspsmoo}).
In typical cases, one may employ a smoothing mollifier, $\sigma(\cdot) \in C^\infty(0,1)$,  setting
\begin{equation}\label{eq:mollifier}
\sigma_k= \sigma\left(\frac{|k|}{N}\right), \quad \sigma(\xi)   \left\{\begin{array}{ll} \equiv 1, & \xi\leq \frac{1}{3},  \\ \\
\in (0,1), & \frac{1}{3} < \xi < \tthirds, \\ \\  
\equiv 0, & \frac{2}{3} \leq \xi \leq 1.\end{array}\right. 
\end{equation} 
\end{subequations}
This is the $2/3$ de-aliasing Fourier method  which is often advocated for spectral computations,in particular those involving  quadratic nonlinearities, \cite{HL07,OHFS10,Kerr93,Kerr05}.  

\medskip
 In what sense does the $2/3$ method remove aliasing errors? to make precise the de-aliasing aspect of (\ref{eqs:fourier}),  consider the  $2/3$ truncated solution $\um:=\SR\uN$. Here we emphasize that we are dealing with the smoothed solution, $\um$,  of degree $m:= \tthirds N$. Observing that  truncation  commute with differentiation, we find 
\begin{equation}\label{eq:jN}
\frac{\partial }{\partial t}\um(x,t)+\frac{1}{2}\frac{\partial}{\partial x}\SR\left(\IN[\um^2]\right)(x,t)=0, \qquad deg(\um)= m:=\tthirds N.
\end{equation}
We now come to the key point behind the removal of aliasing in quadratic nonlinearities: 
since $\widehat{u}_m(k)=0$ for $|k|> \tthirds N$ then $\widehat{\um^2}(k)=0$ for $|k|> \frac{4}{3}N$ hence $\widehat{\um^2}({k+\aj(2N+1)})=0$ for $|k|\leq \tthirds N, \aj\neq 0$; consequently, since the smoothing operator $\mathcal{S}$ acts only on the first $\tthirds N$ mode,  $\mathcal{S}\left({\AN}\um^2\right)\equiv 0$,
and we conclude
\[
\SR \big(\IN [\um^2]\big)(x,\cdot) \equiv \SR \big(\left(\SN+{\AN}\right)[\um^2]\big)(x,\cdot)
= \SR \left(\SN[\um^2]\right)(x,\cdot) \equiv \SR \um^2(x,\cdot).
\]
We summarize by stating the following.
\begin{corollary}\label{cor:special}
 Consider the $2/3$ de-aliasing Fourier method \eqref{eqs:fourier}  then its $2/3$ smoothed solution,  $\um:=\SR\uN$, satisfies
\begin{equation}\label{eq:23}
\frac{\partial }{\partial t}\um(x,t)+\frac{1}{2}\frac{\partial}{\partial x}\SR[\um^2](x,t)=0, \qquad 
\SR w= \sum_{|k|\leq m} \sigma_k\widehat{w}_k e^{ikx}, \ m= \tthirds N.
\end{equation}
\end{corollary}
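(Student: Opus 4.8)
The plan is to apply the smoothing operator $\SR$ directly to the $2/3$ method \eqref{eq:fourier} and then reduce the whole statement to a single de-aliasing identity, $\SR\big(\IN[\um^2]\big)=\SR[\um^2]$. Since $\SR$ is a Fourier multiplier it commutes with both $\ddt$ and $\ddx$; applying it to \eqref{eq:fourier} and using $\SR\uN=\um$ together with $(\SR\uN)^2=\um^2$, I would first arrive at the intermediate equation \eqref{eq:jN}, namely $\ddt\um+\frac12\ddx\SR\big(\IN[\um^2]\big)=0$. Everything then hinges on showing that the interpolant $\IN$ may be replaced by $\SR$ when acting on the quadratic term $\um^2$.

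Next I would split the interpolant into truncation and aliasing, $\IN=\SN+\AN$, so that $\SR\big(\IN[\um^2]\big)=\SR\big(\SN[\um^2]\big)+\SR\big(\AN[\um^2]\big)$. The truncation part is transparent: $\SR$ is supported on the lowest wavenumbers $|k|\le m=\tthirds N$, while $\SN$ retains all $|k|\le N$, so $\SR\big(\SN[\um^2]\big)=\SR[\um^2]$. The content of the corollary is therefore the vanishing of the aliasing contribution, $\SR\big(\AN[\um^2]\big)\equiv 0$.

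The heart of the argument, and the step I expect to require the most care, is the bandwidth count establishing $\SR\big(\AN[\um^2]\big)\equiv 0$. Because $\widehat{\um}(k)=0$ for $|k|>\tthirds N$, the convolution underlying the product gives $\widehat{\um^2}(p)=0$ for $|p|>\frac{4}{3}N$. Reading off \eqref{eq:al}, the $k$-th mode of $\AN[\um^2]$ gathers the aliased coefficients $\widehat{\um^2}\big(k+\aj(2N+1)\big)$ with $\aj\neq 0$, and after applying $\SR$ only $|k|\le m=\tthirds N$ survive. For such $k$ and $\aj\neq 0$ one has $|k+\aj(2N+1)|\ge (2N+1)-\tthirds N=\frac{4}{3}N+1>\frac{4}{3}N$, so every surviving aliased coefficient lies outside the support of $\widehat{\um^2}$ and hence vanishes. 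The decisive point is the exact tuning of the $2/3$ cutoff: doubling the bandwidth yields $\frac{4}{3}N$, and adding the top retained wavenumber $\tthirds N$ returns exactly $2N$, just short of the aliasing period $2N+1$. Substituting $\SR\big(\IN[\um^2]\big)=\SR[\um^2]$ into \eqref{eq:jN} gives \eqref{eq:23}. Since the argument is purely algebraic in Fourier space, no regularity of $\uN$ is used; the only real obstacle is keeping the index bookkeeping in \eqref{eq:al} consistent so that the shift by $2N+1$ is correctly matched against the doubled bandwidth of the quadratic term.
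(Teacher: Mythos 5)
Your proof is correct and follows essentially the same route as the paper: applying $\SR$ to \eqref{eq:fourier} to obtain \eqref{eq:jN}, splitting $\IN=\SN+\AN$, and using the bandwidth count $\widehat{\um^2}(p)=0$ for $|p|>\frac{4}{3}N$ together with $(2N+1)-\tthirds N=\frac{4}{3}N+1$ to conclude $\SR\big(\AN[\um^2]\big)\equiv 0$ while $\SR\big(\SN[\um^2]\big)=\SR[\um^2]$. There are no gaps to report.
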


\noindent
Thus, by  truncating the top $1/3$ of the modes, we de-aliased the Fourier method, (\ref{eq:fourier}), in the sense that (\ref{eq:23}) does \emph{not} involve any aliasing errors: only truncation errors, $(I-\SR)[\um^2]$ are involved. Indeed, the formulation  of $2/3$ method in (\ref{eq:23}) resembles the $m$-mode spectral method (\ref{eq:spburg}). The only difference is due to the fact that unless $\sigma_k \equiv 1$,  the  smoothing operator ${\mathcal S}$ is not a projection\footnote{When $\sigma_k \equiv 1$, then ${\mathcal S}=P_{\tthirds N}$ and the $2/3$ method coincides with the spectral Fourier method (\ref{eq:spburg})
 with $ m= \tthirds N$ modes,
\[
\frac{\partial}{\partial t}\um(x,t)+\frac{1}{2}\frac{\partial}{\partial x}P_{m}[\um^2](x,t)=0,  \qquad  \ |k|\leq \tthirds N. 
\]}

\medskip
The following theorem shows that as long as the Burgers solution remains smooth, the $2/3$ de-aliasing Fourier method is stable and enjoys spectral convergence.
 
\begin{theorem}[{\bf Spectral convergence of the $2/3$ method for smooth solutions}]\label{thm:pspsmooth}  Assume that  for $0<t\leq T_c$, the solution of the Burgers equation \eqref{eq:pde} is smooth, $u(\cdot,t)\in L^\infty\left([0,T_c], C^{1+\alpha}(\pil,\pir]\right)$. Then, the $2/3$ de-aliasing method \eqref{eqs:fourier}  converges in $L^\infty\left([0,T_c], L^2(0,2\pi]\right)$, 
\[
\|\um(\cdot,t)-u(\cdot,t)\|_{L^2} \rightarrow 0, \qquad  0\leq t \leq T_c,
\]
and the following spectral convergence rate estimate holds
\[
\|\uN(\cdot,t)-u(\cdot,t)\|^2_{L^2} \lesssim e^{\displaystyle {\int_0^t |u_x(\cdot,\tau)|_\infty d\tau}} \!\!\!\left(N^{-2s}\|u(\cdot,0)\|^2_{H^s}+ N^{\threehf-s}\max_{\tau\leq t}\|u(\cdot,\tau)\|_{H^s}\right), \  s> \threehf.
\]
\end{theorem}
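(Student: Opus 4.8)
The plan is to reduce, via Corollary~\ref{cor:special}, to the smoothed solution $\um=\SR\uN$, which satisfies the \emph{aliasing-free} equation \eqref{eq:23}, and then to run the energy argument of Theorem~\ref{thm:spsmooth} almost verbatim. The point is that \eqref{eq:23} is structurally the spectral method \eqref{eq:spburg} with the $m$-mode smoothing $\SR$ ($m=\tthirds N$) in place of the projection $\SN$; when $\sigma_k\equiv1$ one has $\SR=P_m$ and the statement is literally Theorem~\ref{thm:spsmooth} with $N$ replaced by $m$. Hence the only genuinely new issue is the perturbation produced by $\SR$ failing to be a projection once $\sigma_k<1$.

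First I would record the a priori $L^2$ control of $\um$. Integrating the method \eqref{eq:fourier} against $\um=\SR\uN$ and using the de-aliasing identity behind Corollary~\ref{cor:special} (so that $\um^2$ suffers no aliasing below the cut-off $m$), the cubic flux collapses, $\int\partial_x\um\cdot\um^2\,dx=\tfrac13\int\partial_x(\um^3)\,dx=0$, which yields conservation of the weighted energy $\|\uN\|^2_{L^2_{\SR}}=2\pi\sum_{|k|\le m}\sigma_k|\widehat{u}_k|^2$. Since $\sigma_k\in(0,1]$ this dominates $\|\um\|_{L^2}$, giving the analog of the exact conservation \eqref{eq:spL2consv}.

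Next I would set $\eta:=\um-u$ and derive the error equation exactly as in Theorem~\ref{thm:spsmooth}: writing \eqref{eq:23} as $\partial_t\um+\partial_x(\um^2/2)=\tfrac12\partial_x(I-\SR)[\um^2]$, using \eqref{eq:burgers} to replace $\partial_t u$, expanding $\SR[\um^2]-u^2=-(I-\SR)[\um^2]+\eta(\um+u)$, and discarding every divergence term. This collapses to
\[
\tfrac12\frac{d}{dt}\int\eta^2\,dx=-\tfrac12\int u_x\,\eta^2\,dx+\underbrace{\tfrac12\int\eta\,\partial_x(I-\SR)[\um^2]\,dx}_{=:\,{\mathcal J}},
\]
so the first term is controlled by $\tfrac12|u_x(\cdot,t)|_\infty\|\eta\|^2_{L^2}$ and drives the Gronwall loop, while the consolidated term ${\mathcal J}$ carries the whole consistency error.

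The main obstacle is the bound on ${\mathcal J}$. In the pure spectral case $\SR=\SN$, splitting $\eta=\um-u$ makes the $\um$-part $-\tfrac12\int\partial_x\um\,(I-\SN)[\um^2]\,dx$ vanish by orthogonality, leaving only $\tfrac12 e_N$ with $e_N=\int\um^2(I-\SN)[u_x]\,dx$ as in \eqref{eq:gronb}; by Jackson's estimate and \eqref{eq:maxHs} this is spectrally small, $|e_N|\lesssim N^{\threehf-s}\|u\|_{H^s}\|\um\|^2_{L^2}$. For a genuine smoothing $\sigma_k<1$ the orthogonality is broken and the $\um$-part survives. Here the decisive structural input is again Corollary~\ref{cor:special}: $(I-\SR)[\um^2]$ is aliasing-free and, because $\sigma_k\equiv1$ for $|k|\le\tfrac13N$, is supported entirely on the high modes $|k|>\tfrac13N$. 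Thus ${\mathcal J}$ only pairs $\eta$ against the high-frequency tail $\partial_x(I-\SR)[\um^2]$ of the smooth quadratic $\um^2$; I would bound it by Cauchy--Schwarz, $|{\mathcal J}|\le\tfrac12\|\eta\|_{L^2}\|(I-P_{N/3})\partial_x\um^2\|_{L^2}$, and show the tail is spectrally small using the smoothness $u\in C^{1+\alpha}_x$ together with the a priori control of $\um$. Inserting these bounds and invoking Gronwall as in \eqref{eq:grona} --- with the initial error $\|(I-\SR)u(\cdot,0)\|^2_{L^2}\lesssim N^{-2s}\|u(\cdot,0)\|^2_{H^s}$ --- delivers convergence and the stated rate with exponents $N^{-2s}$ and $N^{\threehf-s}$. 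I expect the delicate point to be keeping this estimate sharp enough to preserve the threshold $s>\threehf$: a crude bound that lets $\partial_x$ fall on the numerical error $\eta$ would, through an inverse inequality, cost a power of $N$ and raise the required regularity, and it is precisely the de-aliased localization of $(I-\SR)[\um^2]$ to $|k|>\tfrac13N$ that must be exploited to avoid this loss.
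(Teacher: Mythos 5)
Your reduction via Corollary~\ref{cor:special}, your error identity, and your treatment of the $(I-\SR)[u_x]$-type consistency term and of the initial error are all sound; indeed your consolidated term ${\mathcal J}=\tfrac12\int\eta\,\partial_x(I-\SR)[\um^2]\,dx$ is algebraically equivalent to what the paper works with. The genuine gap is in your proposed bound for ${\mathcal J}$. Split $\eta=\um-u$. The $u$-part integrates by parts into $\tfrac12\int(I-\SR)[u_x]\,\um^2\,dx$, which is the benign term $e_N$ controlled by Jackson's bound. The $\um$-part, however, equals (using $\int\um\,\partial_x\um^2\,dx=\tfrac23\int\partial_x(\um^3)\,dx=0$ and equation \eqref{eq:23})
\[
\tfrac12\int\um\,\partial_x(I-\SR)[\um^2]\,dx
=-\tfrac12\int\um\,\partial_x\SR[\um^2]\,dx
=\int\um\,\partial_t\um\,dx
=\tfrac12\frac{d}{dt}\int|\um|^2dx,
\]
and this is exactly the quantity your Cauchy--Schwarz estimate $|{\mathcal J}|\le\tfrac12\|\eta\|_{L^2}\|(I-P_{N/3})\partial_x\um^2\|_{L^2}$ cannot handle: the only a priori information on $\um$ is the (weighted) $L^2$ bound, so the high modes of $\um^2$ need not be small. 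If, say, the numerical error $\eta$ carries $O(1)$ $L^2$-mass at frequencies $\sim\tthirds N$, then $\|(I-P_{N/3})\partial_x[u\eta]\|_{L^2}\sim N\|\eta\|_{L^2}$, your bound degenerates to $|{\mathcal J}|\lesssim N\|\eta\|_{L^2}^2$, and Gronwall yields the useless factor $e^{CNt}$. The frequency localization of $(I-\SR)[\um^2]$ to $|k|>N/3$, which you invoke to avoid this loss, is not a substitute for smallness: localization says nothing about the size of those modes, and transferring the smoothness of $u$ to $\um$ presupposes the convergence you are trying to prove.

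The paper closes precisely this gap by never estimating the $\um$-part of ${\mathcal J}$ pointwise in time. It keeps it as the perfect time derivative $\tfrac12\frac{d}{dt}\int|\um|^2dx$, integrates the error inequality in time, and bounds the accumulated energy drift $f_N(t):=\int|\um(x,t)|^2dx-\int|\um(x,0)|^2dx$ using the conservation law \eqref{eq:pspl2cons} of the weighted energy $\sum\sigma_k|\widehat{u}_k(t)|^2$: since $\sigma_k^2\le\sigma_k$ and $\sigma_k-\sigma_k^2$ vanishes outside $\tfrac13 N\le|k|\le\tthirds N$, one gets $f_N(t)\le\sum_{\frac13 N\le|k|\le\tthirds N}(\sigma_k-\sigma_k^2)|\widehat{u}_k(0)|^2$, which is spectrally small because it involves only the tail of the smooth initial data, consult \eqref{eq:abc}. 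Gronwall is then applied to the time-integrated error $E_m(t)$. This exchange --- conservation plus time integration in place of a pointwise-in-time consistency estimate --- is the one structural idea your proposal is missing; with it, the rest of your argument goes through essentially as written.
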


\begin{proof} We start with (\ref{eq:23})
\[
\frac{\partial }{\partial t}\um(x,t)+\frac{1}{2}\frac{\partial}{\partial x}\Big(\SR[\um^2](x,t)\Big)=0.
\]
Since  $\SR$ need not be a projection, there is no $L^2$-energy conservation for the $2/3$ smoothed solution $\um$. Instead,
 we integrate    against $\uN$ to find that the  corresponding energy balance reads 
\begin{eqnarray*}
\hf \dt\int\uN(x,t)\um(x,t)dx & = & -\frac12\int \uN\frac{\partial}{\partial x}\SR[\um^2](x,t)dx  \\
&   = & \quad \frac12\int \ddx(\SR\uN) \um^2(x,t)dx =\frac{1}{6}\int \ddx \um^3 dx =0,
\end{eqnarray*}
and hence the solution conserve the weighted $L^2_{\SR}$-norm, 
\begin{equation}\label{eq:pspl2cons}
 \|\um(\cdot,t)\|^2_{L^2_\SR}= \|\uN(\cdot,0)\|^2_{L^2_\SR}, \qquad \|\uN(\cdot,t)\|^2_{L^2_\SR}:= \int (\SR\uN) \uN dx = 2\pi \!\!\!\sum_{|k|\leq \tthirds N} \sigma_k|\widehat{u}_k(t)|^2.
\end{equation}

We proceed along the lines of the spectral proof in theorem \ref{thm:spsmooth}, integrating 
$|\um-u|^2\equiv |\um|^2-|u|^2 -2u(\um-u)$:  after discarding all terms which are in divergence form, we are left with
\begin{eqnarray*}
\frac12\dt\int  (\um-u)^2dx&=& \dt\int \Bigg(\frac{|\um|^2}2-\frac{|u|^2}2- u(\um-u)\Bigg)dx\\
&= &\frac12\dt \int |\um|^2dx -\int \del_t(u(\um-u))dx =:{\mathcal I}_1+{\mathcal  I}_2.
\end{eqnarray*}
Unlike the $L^2$ conservation of the spectral solution $\uN$, consult (\ref{eq:spL2consv}), there is no $L^2$-energy conservation for the $2/3$ smoothed solution $\um$ and we therefore leave ${\mathcal I}_1$ is left as perfect time derivative. As for  the second term 
\begin{equation*}
{\mathcal I}_2=\int \del_t\big(u(\um-u)\big)dx \equiv \int \del_t u (\um-u)dx+\int  u (\del_t  \um-\del_t u)dx,
\end{equation*}
we reproduce the same steps we had  in the spectral case:  using (\ref{eq:burgers}) and (\ref{eq:23})  to convert time derivatives to spatial ones, we find
\begin{eqnarray*}
{\mathcal I}_2&= &-\int u  u_x (\um-u)dx - \int  u \del_x\left(\frac{\um^2}2- \frac{u^2}2\right)dx +\int  u\del_x (I-\SR) [\um^2]dx\\
&=& -\int u  u_x (\um-u)dx + \int  u_x\left(\frac{\um^2}2- \frac{u^2}2\right)dx -\frac12\int  u_x(I-\SR) [\um^2]dx\\
&=&\int u_x \left( \frac{\um^2}2-\frac{u^2}2-u (\um-u)\right)dx-\frac12\int   u_x (I-\SR)[\um^2]dx.
\end{eqnarray*}
Eventually, we end up with
\[
\frac12\frac{d}{dt}\int |\um(x,t)-u(x,t)|^2dx \leq    \frac{|u_x|_\infty}{2} \int |\um(x,t)-u(x,t)|^2dx+\frac12 e_N(t) + \frac12 \dt\int |\um(x,t)|^2dx,
\]
where the error term, $e_N$ is given by $\displaystyle e_N(t):=-\int \um^2 (I-\SR)[u_x] dx$.
Integrating  in time we find 
\begin{eqnarray}
\hspace*{1.4cm}  \lefteqn {\int_x|\um(x,t)-u(x,t)|^2dx -\int_x|\um(x,0)-u(x,0)|^2dx} \label{eq:pspgron}  \\
& & \ \ \ \ \ \leq   
  |u_x|_\infty \int_{\tau=0}^t \int |\um(x,t)-u(x,t)|^2dxd\tau + \int_0^t e_N(\tau)d\tau +  f_N(t), \nonumber
\end{eqnarray}
with the additional error term, $f_N(t)$, given by
\[
f_N(t) :=  \int |\um(x,t)|^2dx - \int |\um(x,0)|^2dx.
\]
The error term $e_N(t)$ can be estimated as before: observe that under the hypothesis $u_x\in L_t^\infty C^{0,\alpha}_x$, one has 
\begin{equation}\label{eq:xyz}
 |e_N(t)|\lesssim \max_x|(I-\SR)[u_x(x,t)]|\cdot\|\um\|^2_{L^2} \lesssim \frac{\ln N}{N^\alpha}\|\uN(\cdot,0)\|^2_{L^2} \rightarrow  0.
\end{equation}

To address the new error term, $f_N(t)$, we observe by the $L^2_{\SR}$-energy conservation (\ref{eq:pspl2cons}),
\begin{eqnarray*}
\int |\um(x,t)|^2dx & = & \sum_{|k|\leq \tthirds N} \sigma_k^2 |\widehat{u}_k(t)|^2 \leq \sum_{|k|\leq \tthirds N} \sigma_k 
|\widehat{u}_k(t)|^2  =  \sum_{|k|\leq \tthirds N} \sigma_k |\widehat{u}_k(0)|^2  \\
 & = & \sum_{|k|\leq \tthirds N} \sigma^2_k|\widehat{u}_k(0)|^2 + (\sigma_k-\sigma^2_k)|\widehat{u}_k(0)|^2 \\
 &= & \int |\um(x,0)|^2 + \sum_{|k|\leq \tthirds  N} \left(\sigma_k-\sigma^2_k\right) |\widehat{u}_k(0)|^2.
\end{eqnarray*}
Since $\sigma_k\equiv 1$ for $|k|< N/3$, consult (\ref{eq:mollifier}), we conclude
\begin{eqnarray}\label{eq:abc}
f_N(t) &:= & \int |\um(x,t)|^2 -\int |\um(x,0)|^2 \\
& \leq & \sum_{\frac{1}{3}N \leq  |k|\leq \tthirds  N} \left(\sigma_k-\sigma^2_k\right) |\widehat{u}_k(0)|^2 \leq  \big\|\big(P_{\tthirds N}-P_{\frac{1}{3}N}\big)u(\cdot,0)\big\|_{L^2}^2 \rightarrow 0. \nonumber
\end{eqnarray}
With (\ref{eq:pspgron}), (\ref{eq:abc}) and (\ref{eq:xyz})  in place, one obtains an  estimate  on the error integrated in space-time
\[
\frac12 \dt E_m(t)  \leq  \frac{|u_x|_\infty}{2}E_m(t)  + \frac12\int_0^t e_N(\tau)d\tau + \frac12 f_N(t), \quad  E_m(t):=\int_0^t \int |\um(x,\tau)-u(x,\tau)|^2dxd\tau.
\]
Convergence follows  by Gronwall's inequality,
\begin{eqnarray*}
\lefteqn{\int |\um(x,t)-u(x,t)|^2dx} \\
& &    \lesssim   e^{\displaystyle{\int_0^t \!\!|u_x(\cdot,\tau)|_\infty d\tau}} 
 \!\!\Big(\|\um(\cdot,0)-u(\cdot,0)\|^2_{L^2}  \\
  & & \hspace*{3.2cm} \left. + \max_{x,  \tau\leq t} |(I-\SR)u_x(x,\tau)| + \big\|\big(P_{\tthirds N}-P_{\frac{1}{3}N}\big)u(\cdot,0)\big\|_{L^2}^2\right). 
\end{eqnarray*}
Moreover, with $\uN(\cdot,0)=\SN u(\cdot,0)$ we end up with spectral convergence rate estimate
\begin{eqnarray*}
\lefteqn{\int |\um(x,t)-u(x,t)|^2 dx} \\
& &  \lesssim e^{\displaystyle{\int_0^t |u_x(\cdot,\tau)|_\infty d\tau}} \left(N^{-2s}\|u(\cdot,0)\|^2_{H^s}+ N^{\threehf-s}\max_{\tau\leq t}\|u(\cdot,\tau)\|_{H^s}\right), \quad s>\threehf.
\end{eqnarray*}
\end{proof}

\section{Fourier method for Burgers equation: instability for weak solutions}\label{sec:2-3rd}
In this section we discuss the spectral and the $2/3$ de-aliased pseudo-spectral Fourier approximations of Burgers'  equation, (\ref{eq:burgers}),  after the  formation of shock discontinuities. We show that both methods are unstable after the critical time, $t>T_c$. 
Recall that the spectral method is a special case of the $2/3$ de-aliased method when we set the smoothing factors $\sigma_k\equiv 1$, see corollary \ref{cor:special}. It will therefore suffice to consider the $2/3$ de-aliasing pseudo-spectral Fourier method (\ref{eq:23}). We begin with its $L^2_{\SR}$-conservation (\ref{eq:pspl2cons}), which we express as 
\begin{equation}\label{eq:l2energy}
\|\SR^{1/2}\uN(\cdot,t)\|_{L^2}= \| \SR^{1/2}\uN(\cdot,0)\|_{L^2}, \qquad \SR^{1/2}\uN:=\sum_{|k|\leq m} \sqrt{\sigma_k}\,\widehat{u}_k(t).
\end{equation}
Since the quadratic energy associated with $\SR^{1/2}\uN$ is bounded, it follows that, after extracting a subsequence if necessary\footnote{Here and below we continue to label such subsequences as $\uN$.} that $\SR^{1/2}\uN(\cdot,t)$ and hence $\um=\SR\uN$ has a $L^2$-weak limit, $\ubar(x,t)$. But $\ubar$ \emph{cannot} be the physically relevant entropy solution of (\ref{eq:pde}). Our next result quantifies what can go wrong.
 
\begin{theorem}[{\bf The  $2/3$ method must admit spurious oscillations}]\label{thm:instab}
Let $T_c$ be the critical time of shock formation  in Burgers' equation \eqref{eq:burgers}. Let $\um=\SR\uN$ denote the smoothed $2/3$ de-aliasing Fourier method, \eqref{eqs:fourier}. Assume the $L^6$-bound, $\|\um(\cdot,t)\|_{L^6} \leq Const$ holds. 
Then,  for $t>T_c$, there exists a constant $c_0>0$ (independent of $N$) such that\footnote{$\|\um\|_{TV}$ denotes the total variation of $\um$.} 
\begin{equation}\label{eq:instab}
\max_{x}|\um(x,t)|\times\|\um(\cdot,t)\|^2_{TV} \geq c_0 \sqrt{m}.
\end{equation}
\end{theorem}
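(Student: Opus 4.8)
The plan is to argue by contradiction, converting the asserted lower bound into the statement that a too-small total variation would force the weak limit to be an $L^2$-energy conserving weak solution of Burgers' equation — which is impossible past $T_c$. Write $M:=\max_x|\um(\cdot,t)|$ and $V:=\|\um(\cdot,t)\|_{TV}$, and suppose the conclusion fails, so that along a subsequence $MV^2=o(\sqrt m)$. Since the weighted energy $\|\um\|_{L^2_\SR}$ is conserved by \eqref{eq:pspl2cons}, $\um$ is bounded in $L^2$; and because $\int\um^2\le\max_x|\um|\cdot\|\um\|_{L^1}\lesssim M\|\um\|_{L^2}$, the maximum $M$ is bounded below by a positive constant, so the hypothesis in fact forces $V=o(m^{1/4})$. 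Together with the assumed $L^6$-bound I would extract (relabelled) subsequences with $\um\rightharpoonup\ubar$ in $L^2$, $\um^2\rightharpoonup\overline{u^2}$ in $L^3$, and $\um^3\rightharpoonup\overline{u^3}$ in $L^2$.

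First I would dispose of the truncation error. Since $\um^2$ has bounded variation, its Fourier coefficients decay as $|\widehat{\um^2}(k)|\lesssim\|\um^2\|_{TV}/|k|\le 2MV/|k|$, and the multiplier $I-\SR$ is supported on $|k|>N/3=m/2$; hence
\[
\|(I-\SR)[\um^2]\|_{L^2}^2\lesssim (MV)^2\!\!\sum_{|k|>m/2}\frac1{k^2}\lesssim\frac{(MV)^2}{m}\longrightarrow0.
\]
Consequently $\SR[\um^2]=\um^2-(I-\SR)[\um^2]\rightharpoonup\overline{u^2}$ and, passing to the limit in \eqref{eq:23}, $\ubar$ solves $\del_t\ubar+\tfrac12\del_x\overline{u^2}=0$; likewise the energy identity obtained by multiplying \eqref{eq:23} by $\um$, namely $\del_t\tfrac{\um^2}2+\tfrac13\del_x\um^3=\tfrac12\um\del_x(I-\SR)[\um^2]$, admits a weak limit.

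The heart of the argument is a compensated-compactness step showing that no energy escapes into oscillations, i.e. that the Young measure generated by $\um$ is a Dirac mass, so that $\overline{u^2}=\ubar^2$ and $\overline{u^3}=\ubar^3$. For an entropy pair $(\eta,q)$, $q'=\eta'f'$ with $f=u^2/2$, the production is
\[
\del_t\eta(\um)+\del_x q(\um)=\tfrac12\eta'(\um)\del_x(I-\SR)[\um^2]
=\tfrac12\del_x\!\big(\eta'(\um)(I-\SR)[\um^2]\big)-\tfrac12\eta''(\um)\um_x(I-\SR)[\um^2].
\]
The divergence-form term tends to $0$ in $H^{-1}$ by the $L^2$-estimate above, while the commutator $\eta''(\um)\um_x(I-\SR)[\um^2]$ is controlled in the space of measures using $\|\um_x\|_{L^1}=V$ together with the bounded-spectrum estimate $\|(I-\SR)[\um^2]\|_{L^\infty}\lesssim MV$ (the tail $\sum_{m/2<|k|\le 2m}1/|k|$ being $O(1)$). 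The $L^6$-hypothesis supplies the companion bound in $W^{-1,p}$, $p>2$, so that Murat's lemma yields $H^{-1}$-precompactness of the production, and Tartar's commutation relation for $(u,u^2/2)$ and $(u^2/2,u^3/3)$ then forces the (genuinely nonlinear) Young measure to concentrate. In particular $\um\to\ubar$ strongly, $\overline{u^2}=\ubar^2$, $\overline{u^3}=\ubar^3$, and the right-hand commutator in the energy identity vanishes, so the limit obeys both $\del_t\ubar+\del_x\tfrac{\ubar^2}2=0$ and $\del_t\tfrac{\ubar^2}2+\del_x\tfrac{\ubar^3}3=0$: it is an $L^2$-energy conserving weak solution. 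This contradicts shock formation, since across a genuine Burgers shock $u_L>u_R$ the energy flux jumps by $-\tfrac1{12}(u_L-u_R)^3<0$, so an energy-conserving weak solution can carry no shocks, whereas for $t>T_c$ the crossing of characteristics forces one; the contradiction establishes \eqref{eq:instab}.

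The main obstacle is precisely this compensated-compactness step — making the entropy production precompact in $H^{-1}$ under the sharp hypothesis on the total variation. The delicate quantitative point is to balance the measure-type bound $\lesssim MV^2$ coming from $\|\um_x\|_{L^1}\,\|(I-\SR)[\um^2]\|_{L^\infty}$ against the higher-integrability bound extracted from $\|\um\|_{L^6}$, and it is this interpolated balance inside Murat's lemma that pins the critical scaling $MV^2\sim\sqrt m$; by comparison the weak compactness, the truncation estimate, and the final Rankine–Hugoniot contradiction are comparatively routine.
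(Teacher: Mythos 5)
Your proposal follows the same broad strategy as the paper's proof: assume \eqref{eq:instab} fails along a subsequence, show that the quadratic entropy production $\tfrac12\um\del_x(I-\SR)[\um^2]$ then vanishes in $H^{-1}$, invoke compensated compactness for the pairs $(u,u^2/2)$ and $(u^2/2,u^3/3)$ (admissible thanks to the $L^6$-bound) to upgrade the weak limit to a strong one, conclude that $\ubar$ is a weak solution of Burgers' equation satisfying the quadratic entropy \emph{equality}, and contradict shock formation for $t>T_c$. (For the last step the paper cites Panov's uniqueness theorem for a single strictly convex entropy rather than a Rankine--Hugoniot computation; this matters, since the limit $\ubar$ is only an $L^2$ weak solution and need not carry a piecewise-smooth shock structure to which a jump calculation applies.)

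The genuine gap is in the central quantitative step --- precisely the one you flag as ``delicate'' without resolving. Writing $M=\|\um\|_{L^\infty}$ and $V=\|\um\|_{TV}$, you estimate the commutator $\eta''(\um)\,\um_x\,(I-\SR)[\um^2]$ in the space of measures by $\|\um_x\|_{L^1}\,\|(I-\SR)[\um^2]\|_{L^\infty}\lesssim V\cdot MV=MV^2$. Under the contradiction hypothesis this is only $o(\sqrt m)$ --- a quantity that may \emph{diverge} --- so it is not a measure bound at all, and Murat's lemma (which combines a \emph{uniform} bound in measures with control in some $W^{-1,p}$, $p>2$, to produce $H^{-1}$-precompactness) cannot be applied; there is no ``interpolated balance'' inside Murat's lemma that tolerates a growing measure norm. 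The paper closes exactly this gap by a different manipulation: it never places $(I-\SR)[\um^2]$ in $L^\infty$. Instead it tests the production against $\varphi\in H^1$, uses $(I-\SR)[\um^2]=(P_{2m}-\SR)[\um^2]$ together with self-adjointness of the multiplier and its commutation with $\del_x$ to rewrite the pairing as $\int(P_{2m}-\SR)(\varphi\um)\,\del_x\um^2\,dx$, and then exploits the convolution structure of $\widehat{\varphi\um}(k)=\sum_j\widehat\varphi(k-j)\,\sigma_j\widehat{u}_N(j)$ on the band $|k|\ge m/2$: either $|k-j|$ is large (decay of $\widehat\varphi$) or $|j|$ is large (BV decay $\lesssim V/(1+|j|)$), and Cauchy--Schwarz yields $\|(P_{2m}-\SR)(\varphi\um)\|_{L^\infty}\lesssim\|\varphi\|_{H^1}\,V/\sqrt m$. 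Hence the production is $\lesssim MV^2\|\varphi\|_{H^1}/\sqrt m\to0$; this $1/\sqrt m$ gain is what pins the critical scaling in \eqref{eq:instab}, and your estimates have no counterpart of it. A secondary, patchable omission: your claim that the divergence term $\del_x\bigl(\um(I-\SR)[\um^2]\bigr)$ vanishes in $H^{-1}$ requires $\|\um(I-\SR)[\um^2]\|_{L^2}\lesssim M^2V/\sqrt m\to0$, which does not follow from $MV^2=o(\sqrt m)$ alone; one needs an additional relation such as $M\lesssim 1+V$ (maximum bounded by the conserved mean plus the variation), which you never establish.
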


Theorem \ref{thm:instab} implies that either the  solution of the  $2/3$ de-aliasing Fourier method, $\um=\SR\uN$, grows unboundedly, 
\[
\lim_{N\rightarrow \infty} \|\um(\cdot,t)\|_{L^\infty} \longrightarrow \infty,
\]
or it has an unbounded total variation of order $\geq {\mathcal O}(\sqrt[4]{N})$. Each one of these scenarios implies that $\um$ contains spurious oscillations
which are noticeable \emph{throughout} the computational domain, in agreement with the numerical evidence observed in \cite{Tad89}. We note that this type of nonlinear instability applies to both, the $2/3$ method and in particular, the spectral Fourier method and  we refer in this context to the recent detailed study in \cite{RFNM11,PNFS13} and the refernces therein.

\begin{proof} 
We begin with (\ref{eq:23}) 
\begin{equation}\label{eq:23res}
\ddt\um(x,t)+\hf \ddx\um^2(x,t)=
\hf \frac{\partial}{\partial x}(\Id-\SR)[\um^2](x,t).
\end{equation}
Observe that the residual on the right tends  to zero in $H^{-1}$, 
\begin{eqnarray*}
\Big|\int\ddx\varphi(x)(\Id-\SR)[\um^2](x,t)dx \Big| & = &  
\Big|\int\Big((\Id-\SR)\ddx \varphi(x)\Big)\um^2(x,t)dx\Big| \\
& \leq & \|\um(\cdot,t)\|^2_{L^4}\times
\|(I-\SR)\varphi_x(\cdot)\|_{L^2}  \rightarrow 0, \qquad \forall \varphi \in H^1. 
\end{eqnarray*}

Next, we consider the $L^2$-energy balance associated with (\ref{eq:23res}).
Multiplication by $\um$ yields
\begin{equation}\label{eq:entropy}
\hf \ddt \um^2(x,t) +\frac{1}{3}\ddx \um^3(x,t)=
\hf \um(x,t) \ddx(\Id-\SR)[\um^2](x,t).
\end{equation}
We continue our argument by  claiming that if (\ref{eq:instab}) fails, then  the energy production on the right of (\ref{eq:entropy}) also tends weakly to zero in $H^{-1}$. To this end, we examine the weak form of the expression on the right which we rewrite as 
\[
\int\varphi(x)\um(x,t)\frac{\partial}{\partial x}(P_{2m}-\SR)[\um^2](x,t)dx =
\int (P_{2m}-\SR)\Big(\varphi(x)\um(x,t)\Big) \ddx\um^2(x,t)dx.
\]
It does not exceed
\begin{subequations}\label{sub:eqs}
\begin{eqnarray}
\lefteqn{\Big|\int\varphi(x)\um \ddx(\Id-\SR)[\um^2](x,t)dx\Big|} \nonumber \\
& & =  \Big|\int(P_{2m}-\SR)\Big(\varphi(x)\um(x,t)\Big)
\um(x,t)\frac{\partial}{\partial x}\um(x,t)dx\Big| \\
&  & \leq 
\big\|(P_{2m}-\SR)\big(\varphi(x)\um(x,t)\big)\big\|_{L^\infty}\times \|\um(\cdot,t)\|_{TV}\times\|\um(\cdot,t)\|_{L^\infty}. \nonumber
\end{eqnarray}
 To upper bound the first term we use standard decay estimate, $|\sigma_j\widehat{u}_N(j)(t)|\lesssim {\|\um(\cdot,t)\|_{TV}}/(1+|j|)$. Noting that  $P_{2m}-\SR$ annihilates the first $m/2$ modes, namely, the multipliers $\widehat{P_{2m}\!\!-\!\!\SR}(k)=0, \ |k|\leq m/2=N/3$, we find
\begin{align} 
\big\|(P_{2m}&-\SR)\Big(\varphi(x)\um(x,t)\Big)\big\|_{L^\infty} \nonumber \\
& \leq  \sum_{\frac{m}{2}  \leq |k|\leq  2m} (1-\sigma_k)\Big| \sum_{|j|\leq m}\widehat{\varphi}(k-j)\sigma_j\widehat{u}_N(j,t)\Big|  \nonumber \\
& \lesssim  \sum_{\frac{m}{2} \leq  |k|\leq  2m}  \sqrt{\sum_{|j|\leq m}
(1+|k-j|^2)|\widehat{\varphi}(k-j)|^2} \cdot \sqrt{\sum_{|j|\leq m} \frac{1}{(1+|k-j|^2)(1+|j|^2)}} \times \|\um(\cdot,t)\|_{TV} \\
&  \lesssim \|\varphi\|_{H^1}\|\um(\cdot,t)\|_{TV} \times \frac{1}{\sqrt{m}}. \nonumber
\end{align} 
\end{subequations}
The last two inequalities (\ref{sub:eqs}) give us,
\[
\Big|\int\varphi(x)\um(x,t) 
\frac{\partial}{\partial x}(\Id-\SR)[\um^2](x,t)dx\Big| \lesssim
\frac{1}{\sqrt{m}}\|\um(\cdot,t)\|^2_{TV}\times \|\um(\cdot,t)\|_{L^\infty} \times \|\varphi\|_{H^1}.
\]
We claim that (\ref{eq:instab}) holds by contradiction. If it fails, then we can choose a subsequence, $\umk$, such that
\[
\frac{1}{m_k}\|\umk(\cdot,t)\|^2_{TV}\times \|\umk(\cdot,t)\|_{L^\infty} \leq c_k, \qquad c_k \downarrow 0,
\]
and the energy production on the right of (\ref{eq:entropy}) vanishes in $H^{-1}$. By assumption $\um^r(\cdot,t) \in L^2$ for $r=1,2,3$ and the div-curl lemma, \cite{Mu78,Tar79,Tar87} applies: it follows that $\ubar$ is in fact a \emph{strong} $L^2$-limit, $\umk \rightarrow \ubar$.
Passing to the weak limit in (\ref{eq:23res})${}_{m_k}$  we have that $\ubar$ is weak solution of Burgers' equation (\ref{eq:burgers}),
\[
\frac{\partial}{\partial t}\ubar(x,t)+\frac{\partial}{\partial x}\Big(\frac{\ubar^2(x,t)}{2}\Big)=0.
\]
Moreover, passing to the weak limit in the energy balance (\ref{eq:entropy})${}_{m_k}$, we conclude that $\ubar$ satisfies the quadratic entropy \emph{equality}
\[
\frac{\partial}{\partial t}\Big(\frac{\ubar^2(x,t)}{2}\Big)+\frac{\partial}{\partial x}\Big(\frac{\ubar^3(x,t)}{3}\Big)=0.
\]
But,  due to the uniqueness enforced with by the single entropy -- in this case, the $L^2$ energy, \cite{Pan94}, there exists no  energy \emph{conservative} weak solution of Burgers equation (\ref{eq:burgers})  after the critical time of shock formation. 
\end{proof}

\begin{remark}\label{rem:smoo}
The same result of instability holds if we employ the pseudo-spectral Fourier method with a general smoothing operator beyond just the $2/3$ smoothing, namely $\SR\uN= \sum_{|k|\leq N} \sigma_k \hat{u}_ke^{ikx}$ and smoothing factors $\sigma_k$ decay too fast as $|k|\uparrow N$. 
\end{remark}

\section{Fourier method for Euler equations: convergence for smooth solutions}\label{sec:smoothEuler}
Convergence of the spectral and pseudo-spectral approximation for the Burgers equation made use of its quadratic flux, ${u^2}/2$. The same approach can be pursued for the Euler equations,
\begin{equation}\label{eq:euler}
\ddt \bu +\Lr\nablax (\bu\otimes \bu)=0, \qquad \bx \in {\mathbb T}^d,
\end{equation}
where  $\Lr:=Id- \nablax\Delta^{-1}\text{div}_\bx$ is the Leray projection into divergence free vector fields.

\subsection{Convergence of spectral Fourier approximation for Euler equations}
The spectral method for the Euler equations reads
\begin{equation}\label{eq:Eulerapp}
\ddt \buN +\Lr\nablax \SN(\buN\otimes \buN)=0.
\end{equation}
Convergence  for smooth solutions in this  case, is in fact even  simpler than in Burgers' equation. Observe that for any divergence free vectors fields, $\bv$ and $\bu$, the following identity holds
\begin{eqnarray}\label{eq:identity}
\int \left\langle \big(\bv\nablax (\bv\otimes \bv)-\bv\nablax (\bu\otimes \bu)\big), \bv-\bu\right\rangle d\bx
 \equiv   \int \left\langle (\bv-\bu), \text{S}[\bu]\,(\bv-\bu)\right\rangle d\bx,  \nonumber
\end{eqnarray}
where $\text{S}[\bu]$ is the symmetric part of the stress tensor $\text{S}[\bu]:= \frac12 (\nablax \bu +\nablax \bu^\top)$.
We therefore have,
\[
\left|\int \left\langle \Lr\nablax (\buN\otimes \buN)-\Lr\nablax (\bu\otimes \bu),(\buN-\bu)\right\rangle d\bx\right|\le |\!|\nablax \bu|\!|_{L^\infty}|\!|\buN-\bu|\!|_{L^2}^2,
\]
The error equation
\[
\ddt (\buN-\bu) + \Lr\nablax(\buN\otimes \buN) - \Lr\nablax (\bu\otimes \bu) = (I-\SN) \Lr\nablax(\buN\otimes \buN),
\]
implies
\begin{eqnarray}
\lefteqn{\frac12\frac{d}{dt}|\!|\buN-\bu|\!|_{L^2}^2} \nonumber  \\
& &\le   |\!|\nablax \bu|\!|_{L^\infty}|\!|\buN-\bu|\!|_{L^2}^2 +\left|\int \left\langle (I-\SN)
[\Lr\nablax (\buN\otimes \buN)],\buN-\bu\right\rangle d\bx\right| \label{eq:Eulerl2}  \\
& & \le   |\!|\nablax \bu|\!|_{L^\infty}|\!|\buN-\bu|\!|_{L^2}^2  +\left|\int \left\langle \left((I-\SN)\nabla\bu\right)\buN,\buN\right\rangle d\bx\right|. \nonumber 
\end{eqnarray}
Arguing along the lines of our convergence statement for Burgers equations we conclude that the following result holds.

\begin{theorem}[{\bf Spectral convergence for smooth solutions of Euler equations}]\label{thm:Euler_spectral} Assume that  for $0<t<T_c$, the solution of the Euler equations \eqref{eq:euler} is smooth, $\bu(\cdot,t)\in L^\infty\left([0,T_c), C^{1+\alpha}(\pil,\pir]\right)$. Then its spectral  Fourier approximation \eqref{eq:Eulerapp} converges in $L^\infty\left([0,T_c], L^2({\mathbb T}^d)\right)$,
\[
\|\buN(\cdot,t)-\bu(\cdot,t)\|_{L^2} \rightarrow 0, \qquad  0\leq t < T_c,
\]
and the following spectral convergence rate estimate holds
\begin{eqnarray*}
\lefteqn{\|\buN(\cdot,t)-\bu(\cdot,t)\|^2} \\
& &  \lesssim e^{\displaystyle {2\!\!\int_0^t|\nabla \bu(\cdot,\tau)|_\infty d\tau}}\!\!\!\left(N^{-2s}\|\bu(\cdot,0)\|^2_{H^s} +
N^{\frac{d}{2}+1-s}\max_{\tau\leq t}\|\bu(\cdot,\tau)\|_{H^s}\right)\!, \ \ s>\frac{d}{2}+1.
\end{eqnarray*}
\end{theorem}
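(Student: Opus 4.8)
The plan is to continue directly from the energy inequality \eqref{eq:Eulerl2}, which already encapsulates the hard analytic content (the stability identity \eqref{eq:identity}), and to mimic the scalar Burgers argument of Theorem~\ref{thm:spsmooth}: everything reduces to an $L^2$-conservation law for $\buN$, a spectral estimate on the truncated residual, and a Gronwall closure.

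First I would record the $L^2$-conservation of the spectral Euler approximation, the analogue of \eqref{eq:spL2consv}. Pairing \eqref{eq:Eulerapp} with $\buN$ and using that $\buN$ is divergence free (so $\Lr$ is self-adjoint and $\Lr\buN=\buN$) together with the orthogonality of the truncation error to the $N$-space, one reduces the nonlinear contribution to $\int \langle \buN,\nablax(\buN\otimes\buN)\rangle d\bx$. Writing this in components as $\int u_iu_j\partial_j u_i\,d\bx + \int |\buN|^2\,\text{div}_\bx\buN\,d\bx$, the second term vanishes by $\text{div}_\bx\buN=0$, and the first equals $\int u_j\partial_j(\tfrac12|\buN|^2)\,d\bx=-\tfrac12\int|\buN|^2\,\text{div}_\bx\buN\,d\bx=0$ after an integration by parts. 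Hence $\|\buN(\cdot,t)\|_{L^2}=\|\buN(\cdot,0)\|_{L^2}$.

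Next I would bound the residual term on the right of \eqref{eq:Eulerl2}. Since $\bu(\cdot,t)\in C^{1+\alpha}$ forces $\nablax\bu(\cdot,t)\in C^{0,\alpha}\subset H^{s-1}$ with $s-1>\dhf$, the maximum estimate \eqref{eq:maxHs} applied to $\nablax\bu$ gives
\[
\max_\bx|(I-\SN)\nablax\bu(\bx,t)| \lesssim N^{\dhf-(s-1)}\|\nablax\bu(\cdot,t)\|_{H^{s-1}} \lesssim N^{\frac{d}{2}+1-s}\|\bu(\cdot,t)\|_{H^s}, \qquad s>\tfrac{d}{2}+1,
\]
while for the bare convergence statement (with no rate) Jackson's bound \cite{DL93} yields the decay $\lesssim \ln N/N^\alpha\to 0$. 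Combining this with the conservation law just established,
\[
\Big|\int \langle ((I-\SN)\nablax\bu)\buN,\buN\rangle d\bx\Big| \le \max_\bx|(I-\SN)\nablax\bu|\cdot\|\buN\|^2_{L^2} \lesssim N^{\frac{d}{2}+1-s}\|\bu(\cdot,t)\|_{H^s}\,\|\bu(\cdot,0)\|^2_{L^2}.
\]

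Finally I would close the estimate by applying Gronwall's inequality to \eqref{eq:Eulerl2}, taking $\buN(\cdot,0)=\SN\bu(\cdot,0)$ so that the initial error is $\|(I-\SN)\bu(\cdot,0)\|^2_{L^2}\lesssim N^{-2s}\|\bu(\cdot,0)\|^2_{H^s}$; this produces the amplification factor $e^{2\int_0^t\|\nablax\bu\|_{L^\infty}d\tau}$ together with the two terms $N^{-2s}\|\bu(\cdot,0)\|^2_{H^s}$ and $N^{\frac{d}{2}+1-s}\max_{\tau\le t}\|\bu(\cdot,\tau)\|_{H^s}$, as claimed. Since the stability inequality \eqref{eq:Eulerl2} is already in hand, there is no real obstacle: the only genuinely new points relative to the Burgers proof are verifying the $L^2$-conservation of $\buN$ from the divergence-free/Leray structure, and noting that the $d$-dimensional Sobolev embedding $H^{s-1}\hookrightarrow L^\infty$ forces the threshold $s>\frac{d}{2}+1$ in place of $s>\threehf$.
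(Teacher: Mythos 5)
Your proposal is correct and follows essentially the same route as the paper: $L^2$-conservation of $\buN$ (which you verify in more detail than the paper, using the Leray/divergence-free structure), then the stability inequality \eqref{eq:Eulerl2} combined with the bound $\max_\bx|(I-\SN)\nablax\bu|\lesssim N^{\frac{d}{2}+1-s}\|\bu\|_{H^s}$ from \eqref{eq:maxHs}, and a Gronwall closure with $\buN(\cdot,0)=\SN\bu(\cdot,0)$. One aside is wrong but harmless: $C^{0,\alpha}$ does \emph{not} embed into $H^{s-1}$ for $s-1>\frac{d}{2}$ (the inclusion goes the other way); the rate estimate simply presumes finiteness of $\|\bu(\cdot,\tau)\|_{H^s}$, exactly as in the paper, while the bare convergence statement under $C^{1+\alpha}$ regularity rests on Jackson's bound, as you correctly note.
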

\begin{proof}
Integrating  (\ref{eq:Eulerapp}) against $\buN$ we find the usual statement of $L^2$ energy conservation, 
\[
\|\buN(\cdot,t)\|^2_{L^2}= \|\buN(\cdot,0)\|^2_{L^2}.
\]
Using (\ref{eq:Eulerl2}), we conclude 
\begin{eqnarray*}
\lefteqn{\|\buN(\cdot,t)-\bu(\cdot,t)\|^2_{L^2}  \lesssim  e^{2U'_\infty(t;0)} \|(I-\SN)\bu(\cdot,0)\|^2_{L^2}} \\
& & + \|\buN(\cdot,0)\|_{L^2}^2\int_{0}^t e^{2U'_\infty(t;\tau)}\|(I-\SN)\nabla\bu(\cdot,\tau)\|_{L^\infty} d\tau, \quad U'_\infty(t;\tau):= \int_{s=\tau}^t\|\nablax \bu(\cdot,s)\|_{L^\infty}ds,
\end{eqnarray*}
which yields the spectral convergence rate estimate
\begin{eqnarray}\label{eq:spEulerrate}
\lefteqn{\|\buN(\cdot,t)-\bu(\cdot,t)\|^2_{L^2}}\\
& &  \lesssim e^{2U'_\infty(t;0)} \left(N^{-2s}\|\bu(\cdot,0)\|^2_{H^s} + N^{-s+\frac{d}{2}+1}\max_{\tau\leq t} \|\bu(\cdot,\tau)\|_{H^s}\right), \qquad s>\frac{d}{2}+1. \nonumber
\end{eqnarray}
Observe that the error estimate in the case of Euler equation depends on the truncation error of $\nablax \bu$, corresponding to the dependence on the truncation error of $u_x$ in Burgers equation. The additional loss factor of $d/2$ is due to the $L^\infty({\mathbb T}^d)$-bound, $\max_\bx |(I-\SN){\mathbf w}(\bx)| \lesssim \|{\mathbf w}\|_{H^s}$ for $s>d/2$, consult (\ref{eq:maxHs}).  
\end{proof}

\subsection{The $2/3$ pseudo-spectral approximation of Euler equations}
The pseudo-spectral Fourier method for the Euler equations reads
\[
\ddt \buN +\Lr \nablax \IN(\buN\otimes \buN)=0,
\]
Observe that since $\IN$ does not commute with $\Lr \nablax$, there is no $L^2$-energy conservation.
We introduce the smoothing operator $\SR\buN:=\sum_{|k|\leq m} \sigma_k \widehat{\bu}_k(t)$ which acts on wavenumbers $|k|\leq m=\tthirds N$, while leaving the first $1/3$ portion of the spectrum unchanged: $\sigma_k=\sigma(|k|/N)$, where $\sigma(1-\sigma)$ is supported in $(\frac{1}{3},\tthirds)$. The resulting $2/3$ de-aliasing pseudo-spectral method reads
\begin{equation}\label{eq:Eulerpsapp}
\ddt \buN+\Lr \nablax \IN(\SR\buN\otimes \SR\buN)=0.
\end{equation}
It is the $2/3$ Fourier method which is being used in actual computations, e.g., \cite{OHFS10,KH89,Kerr93,Kerr05} and the references therein. 
Next, we act with the smoothing $\SR$: arguing along the lines of the $2/3$ method for the Burgers' equation in corollary \ref{cor:special},  we find that the   $\bum:=\SR\buN$ satisfies the aliasing-free equation
\begin{equation}\label{eq:Eulerpsp}
\ddt \bum+ \SR\Lr \nablax (\bum\otimes \bum)=0.
\end{equation}
Observe that since  $\SR$ commutes with differentiation, $\bum$ retains incompressibility, 
\[
\ddt \bum+\Lr \nablax \SR (\bum\otimes \bum)=0.
\]
As before, we can integrate against $\buN$ to find by incompressibility of $\bum$, 
\[
\frac12 \dt \int \langle \buN(\bx,t), \bum(\bx,t)\rangle d\bx = -\int \left\langle \SR\buN,  \Lr \nablax (\bum\otimes \bum)\right\rangle d\bx =0,
\]
which implies the weighted $L^2_{\SR}$-energy conservation,
\begin{equation}\label{eq:pspsEulercons}
\|\buN(\cdot,t)\|^2_{L^2_{\SR}} = \|\buN(\cdot,0)\|^2_{L^2_{\SR}}, \qquad \|\buN(\cdot,t)\|^2_{L^2_{\SR}}:=(2\pi)^d\sum \sigma_\bk|\widehat{\bu}_\bk(t)|^2.
\end{equation}

\begin{theorem}[{\bf Spectral convergence of $2/3$ method for smooth Euler solutions}]\label{thm:Euler_pspectral} Assume that  for $0<t<T_c$, the solution of the Euler equations \eqref{eq:euler} is smooth, $\bu(\cdot,t)\in L^\infty\left([0,T_c), C^{1+\alpha}(\pil,\pir]\right)$. Then, the smoothed solution $\bum=\SR\buN$ of its $2/3$ de-aliasing pseudo-spectral Fourier approximation \eqref{eq:Eulerpsapp} converges in $L^\infty\left([0,T_c],L^2({\mathbb T}^d)\right)$,
\[
\|\bum(\cdot,t)-\bu(\cdot,t)\|_{L^2} \rightarrow 0, \qquad  0\leq t < T_c,
\]
and the following spectral convergence rate estimate holds
\begin{eqnarray*}
\lefteqn{\|\bum(\cdot,t)-\bu(\cdot,t)\|^2_{L^2}}\\
& &  \lesssim e^{\displaystyle {2\!\!\int_0^t\!|\nabla \bu(\cdot,\tau)|_\infty d\tau}}\left(N^{-2s}\|\bu(\cdot,0)\|^2_{H^s} +
N^{\frac{d}{2}+1-s}\max_{\tau\leq t}\|\bu(\cdot,\tau)\|_{H^s}\right), \quad s>\frac{d}{2}+1.
\end{eqnarray*}
\end{theorem}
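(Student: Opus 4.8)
The plan is to transcribe the proof of Theorem~\ref{thm:pspsmooth} (the $2/3$ method for Burgers) into the vectorial setting, substituting the scalar quadratic bookkeeping with the divergence-free identity (\ref{eq:identity}) already used in the spectral Euler case. I start from the aliasing-free equation (\ref{eq:Eulerpsp}) for $\bum=\SR\buN$ together with the weighted conservation law (\ref{eq:pspsEulercons}). Since $\SR$ is not a projection, $\bum$ does not conserve the plain $L^2$-energy; accordingly I expand $|\bum-\bu|^2\equiv|\bum|^2-|\bu|^2-2\langle\bu,\bum-\bu\rangle$ and split $\hf\dt\|\bum-\bu\|^2_{L^2}={\mathcal I}_1+{\mathcal I}_2$, leaving ${\mathcal I}_1=\hf\dt\int|\bum|^2d\bx$ as a perfect time derivative exactly as in the Burgers proof.

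For ${\mathcal I}_2=\int\partial_t\langle\bu,\bum-\bu\rangle d\bx$ I would convert the time derivatives into spatial ones using the exact Euler system (\ref{eq:euler}) and the scheme (\ref{eq:Eulerpsp}). Because both $\bu$ and $\bum$ are divergence free, the Leray projection disappears against the test field $\bum-\bu$, and the convective contributions collapse via (\ref{eq:identity}) to $\int\langle\bum-\bu,\text{S}[\bu](\bum-\bu)\rangle d\bx$, bounded by $\|\nablax\bu\|_{L^\infty}\|\bum-\bu\|^2_{L^2}$. What remains is a truncation residual of the form $\int\langle((I-\SR)\nablax\bu)\bum,\bum\rangle d\bx$, the analogue of the term $e_N(t)$ in (\ref{eq:xyz}); under $\bu\in C^{1+\alpha}_x$ it is spectrally small by Jackson's estimate together with the uniform $L^2_{\SR}$-bound, and the additional $d/2$ loss in the convergence rate enters exactly here through the $L^\infty$-embedding (\ref{eq:maxHs}), $\max_\bx|(I-\SR)\nablax\bu|\lesssim N^{d/2-s}\|\bu\|_{H^s}$.

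The one genuinely new term, inherited from leaving ${\mathcal I}_1$ undifferentiated, is $f_N(t):=\int|\bum(\cdot,t)|^2d\bx-\int|\bum(\cdot,0)|^2d\bx$. I would dispatch it precisely as in (\ref{eq:abc}): the weighted conservation (\ref{eq:pspsEulercons}) gives $\int|\bum(\cdot,t)|^2=\sum\sigma_\bk^2|\widehat{\bu}_\bk(t)|^2\leq\sum\sigma_\bk|\widehat{\bu}_\bk(t)|^2=\sum\sigma_\bk|\widehat{\bu}_\bk(0)|^2$, whence $f_N(t)\leq\sum_\bk(\sigma_\bk-\sigma_\bk^2)|\widehat{\bu}_\bk(0)|^2$; since $\sigma_\bk\equiv1$ on $|\bk|\leq N/3$ this is dominated by $\|(P_{\tthirds N}-P_{\frac13 N})\bu(\cdot,0)\|^2_{L^2}$, which vanishes spectrally for $H^s$-data. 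Assembling ${\mathcal I}_1+{\mathcal I}_2$, integrating in space-time and applying Gronwall as in Theorem~\ref{thm:pspsmooth} then delivers both the convergence $\|\bum-\bu\|_{L^2}\to0$ and the stated rate.

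I expect no serious obstacle: the three ingredients --- the identity (\ref{eq:identity}), the weighted conservation (\ref{eq:pspsEulercons}), and the $f_N$ bookkeeping --- are all already in hand, so the argument is essentially a faithful copy of the Burgers proof read vectorially. The only point requiring mild care is confirming that (\ref{eq:identity}) may legitimately be invoked with $\bv=\bum$: this rests on the incompressibility of $\bum=\SR\buN$, which holds because $\SR$ commutes with $\Lr\nablax$ in passing from (\ref{eq:Eulerpsapp}) to (\ref{eq:Eulerpsp}); once this is verified the pressure plays no role and the estimate closes.
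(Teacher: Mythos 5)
Your proposal is correct, but it is not the route the paper takes for this theorem. The paper does not expand $|\bum-\bu|^2$ Burgers-style; instead it rewrites (\ref{eq:Eulerpsp}) as an error equation, tests it directly against $\bum-\bu$, and invokes the identity (\ref{eq:identity}), which produces \emph{two} residual terms in (\ref{eq:Eulerpspl2}): the spectrally small one $\int\langle((I-\SR)\nablax\bu)\bum,\bum\rangle d\bx$ that you also have, plus a genuinely new trilinear term $\int\langle((I-\SR)\nablax\bum)\bum,\bum\rangle d\bx$, which appears because $(I-\SR)$ does not annihilate $\bum$ (as $\SR$ is not a projection). The paper must then show this extra term vanishes identically through an incompressibility computation, after which Gronwall applies directly to $\|\bum-\bu\|^2_{L^2}$, with no $f_N$ term and no use of the weighted conservation (\ref{eq:pspsEulercons}) inside the error estimate. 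Your transplant of the Burgers proof of Theorem~\ref{thm:pspsmooth} avoids precisely this term: by leaving ${\mathcal I}_1=\hf\dt\int|\bum|^2d\bx$ as a perfect time derivative, the only trilinear residual you generate involves the smooth field $\nablax\bu$, never $\nablax\bum$; the price is the $f_N(t)$ bookkeeping, paid for by (\ref{eq:pspsEulercons}) together with the support of $\sigma_\bk(1-\sigma_\bk)$ in the middle band of the spectrum, and a Gronwall on the time-integrated error. The trade is real: the paper's argument is shorter and keeps the Gronwall pointwise in time, while yours is arguably more robust, since it bypasses the delicate exact cancellation of the $\nablax\bum$-term and works verbatim for any smoothing with $\sigma_\bk\in(0,1]$ and $\sigma_\bk\equiv1$ for $|\bk|\leq N/3$. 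One small correction: your Jackson/embedding bound should read $\max_\bx|(I-\SR)\nablax\bu|\lesssim N^{\frac{d}{2}+1-s}\|\bu\|_{H^s}$ (apply (\ref{eq:maxHs}) to $\nablax\bu\in H^{s-1}$), not $N^{\frac{d}{2}-s}\|\bu\|_{H^s}$; this corrected exponent is exactly what produces the stated rate $N^{\frac{d}{2}+1-s}$, so nothing else in your argument changes.
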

\begin{proof}
We rewrite (\ref{eq:Eulerpsp}) in the form
\[
\ddt \bum+ \Lr \nablax (\bum\otimes \bum)=(I-\SR)\left(\Lr \nablax (\bum\otimes \bum)\right).
\]
Subtract the exact equation (\ref{eq:euler}):  using the identity (\ref{eq:identity}) we find, as before

\begin{eqnarray}\label{eq:Eulerpspl2}
\lefteqn{\quad \frac12\frac{d}{dt}\|\bum-\bu\|_{L^2}^2} \nonumber \\
& & \leq   \|\nablax \bu\|_{L^\infty}\|\bum-\bu\|_{L^2}^2 +\left|\int \left\langle (I-\SR)
[\Lr\nablax (\bum\otimes \bum)],\bum-\bu\right\rangle d\bx\right|\\
& & \leq   \|\nablax \bu\|_{L^\infty}\|\bum-\bu\|_{L^2}^2  +\left|\int \left\langle \left((I-\SR)\nablax\bu\right)\bum,\bum\right\rangle d\bx\right| \nonumber \\
& & \ \ \ + \left|\int \left\langle \left((I-\SR)\nablax\bum\right)\bum,\bum\right\rangle d\bx\right| . \nonumber 
\end{eqnarray}
The last term on the right is due to the fact that $(I-\SR)$ need not annihilate $\nablax\bum$. However, since $\bum$ is incompressible, we find
\begin{eqnarray*}
\int \left\langle \left((I-\SR)\nablax\bum\right)\bum,\bum\right\rangle d\bx & = & \sum_{\alpha,\beta}\int  \bum_\alpha \partial_\alpha\bum_\beta (I-\SR)\bum_\beta d\bx \\
& = & \sum_{\alpha,\beta}\int  \bum_\alpha \hf\partial_\alpha \left(\bum_\beta (I-\SR)\bum_\beta\right)d\bx \\
& = & -\hf  \int  \sum_\alpha \partial_\alpha \bum_\alpha \sum_\beta \left(\bum_\beta (I-\SR)\bum_\beta\right)d\bx =0.
\end{eqnarray*}
We end up with the error bound
\begin{eqnarray*}
\lefteqn{\|\bum(\cdot,t)-\bu(\cdot,t)\|^2_{L^2} \lesssim  e^{2{U'_\infty(t;0)}} \|(I-\SR)\bu(\cdot,0)\|^2_{L^2}} \\
& & + \|\bum(\cdot,0)\|_{L_{\SR}^2}^2\int_{0}^t \!\!e^{{2U'_\infty(t;\tau)}}\|(I-\SR)\nablax\bu(\cdot,\tau)\|_{L^\infty} d\tau, 
\quad U'_\infty(t;\tau):=\int_{s=\tau}^t\!\!\!\!\|\nablax \bu(\cdot,s)\|_{L^\infty}ds,
\end{eqnarray*}
and  spectral convergence rate follows.
\end{proof}

\ifx
\begin{remark}
\myb{We note that the $L^2$-error estimates stated in theorems \ref{thm:Euler_spectral} and \ref{thm:Euler_pspectral} do not imply the convergence in more regular norm and therefore do not exclude  some oscillations.
On the other hand the (pseudo-) spectral approximations  conserves the energy and therefore anomalous dissipation of energy should not appears even if the exact solution $u(t)$ does not remain smooth.} \}\}\}
\end{remark}
\fi

\section{Fourier method for Euler equations: failure of convergence for weak solutions?}\label{sec:nonsmoothEuler}

We now consider the convergence of the $2/3$ method (\ref{eq:Eulerpsapp}) for weak solutions of Euler equations. Its $m$-mode de-aliased  solution is governed by (\ref{eq:Eulerpsp})
\begin{equation}\label{eq:Eulerpspb}
\ddt \bum + \SR\Lr\nablax \left(\bum\otimes \bum\right)=0.
\end{equation}
The method is energy preserving in the sense that $\SR^{1/2}\buN$ is $L^2$-conservative, (\ref{eq:pspsEulercons}), and hence $\bum=\SR\buN$ has s a weak limit, $\bubar$.  The question is to characterize whether $\bubar(x,t)$ is an energy conserving weak solution of Euler equations (\ref{eq:euler}), 
\begin{equation}\label{eq:cdeu}
\ddt \bubar +\Lr\nablax (\bubar\otimes \bubar)=0.
\end{equation}
To this end we compare (\ref{eq:Eulerpsapp}) and (\ref{eq:cdeu}): since $\bum$ tends weakly to $\bubar$ and $\partial_t \bum \rightharpoonup \partial_t\bubar$, then comparing the remaining spatial parts of (\ref{eq:Eulerpsapp}) and (\ref{eq:cdeu}), yields that $\SR\Lr[\bum\otimes \bum](x,t)$ and hence
$\Lr[\bum\otimes \bum](x,t)$  tends weakly to $\Lr[\bubar\otimes\bubar](x,t)$. 
This, however, is not enough to imply the strong convergence of $\uN$, as shown by a simple 
  counterexample of  a 2D potential flow, $\bu_n=\nablax^\perp \Phi_n$ where 
\[
\Phi_n(x_1,x_2)  = \frac1n\Xi(x_1,x_2)(\sin n x_1+ \sin n x_2) 
\]
with $\Xi(x_1,x_2)\in \mathcal{D} (\R^2)$  localized near any point (say $(0,0)$) with weak limit $\bubar \equiv 0$.
In this case $w\,\mbox{-}\lim_{N\rightarrow \infty} \nabla \Lr(\uN\otimes \uN)= \nabla \Lr(\bubar\otimes \bubar) = 0$, yet
\[
w\,\mbox{-}\!\!\!\lim_{N\rightarrow \infty}  ({\uN}_1)^2= w\,\mbox{-}\!\!\!\lim_{N\rightarrow \infty}  ({\uN}_2)^2 = \frac{\Xi(x_1,x_2)^2}2 \neq 0.
\]
Although $\bubar$ need not be a weak solution of Euler equations, it satisfies a weaker notion of a \emph{dissipative solution} in the sense of DiPerna-Lions \cite{Lions96}
To this end, let $\bw$ a   divergence-free smooth solution of
\begin{equation}\label{eq:Eulerw}
\del_t \bw +  \Lr(\nabla \bw\otimes \bw ) =E(\bw), \qquad \Lr E(\bw)=0.
\end{equation}
Now, compare it with the $2/3$ solution  (\ref{eq:Eulerpspb}): the same computation with Gronwall lemma leads to,
\begin{eqnarray*}
\lefteqn{\|(\buN-\bw)(\cdot,t)\|^2_{L^2(\Omega)}  \le  e^{2 {W'_\infty(t;0)}} |\!|(\buN-\bw)(\cdot,0) |\!|^2_{L^2(\Omega)}} \\
&  & \quad +  2\| \buN(\cdot,0)\|^2_{L^2(\Omega)} \int_0^t
  \|(P_N\bw-\bw))(\cdot,s) \|_{W^{1,\infty}(\Omega)} \\
&  & \quad + 2 \int_0^t \!\!\!e^{2 {W'_\infty(t;\tau)}} \|(E(\bw(\tau)), \buN(\tau)-\bw(\tau))\|  d\tau, \quad W'_\infty(t;\tau):=\int_{s=\tau}^t\!\!\!\!\|\nablax \bw(\cdot,s)\|_{L^\infty(\Omega)}ds.
\end{eqnarray*} 
Passing to the weak limit it follows that $\bubar$ is a dissipative solution, satisfying for all  divergence-free smooth solution of (\ref{eq:Eulerw}), the stability estimate 
\begin{eqnarray*}
 \|(\bubar-\bw)(\cdot,t)\|^2_{L^2(\Omega)} & \le & e^{2{W'_\infty(t;0)}} \|(\bubar  -\bw)(\cdot,0) \|^2_{L^2(\Omega)} \\
 & + & 2\int_0^t e^{2 {W'_\infty(t;\tau)}}\,|(E(\bw(\tau)), \bubar(\tau) -\bw(\tau)))|d\tau.
\end{eqnarray*}

The notion  of dissipative solution can be instrumental in the context of  stability near a smooth solution, $\bw$,  or even in the context of uniqueness. However, the construction of  \cite{DLSz10} does not exclude the existence of rough initial data for which the Cauchy problem associated with Euler equations (\ref{eq:euler}) have an infinite set of dissipative solutions. In fact, it is observed in \cite{DLSz10}  that any weak solution with a non-increasing energy, $\|\bu(\cdot,t)\|_{L^2} \leq \|\bu(\cdot,0)\|_{L^2}$,  
is a dissipative solution. These, so-called  admissible solutions, arise as solutions of the Cauchy problem for an infinite set of (rough) initial data, and can be obtained as strong limit in $C(0,T ;L^2_{\rm{weak}}(\Omega))$ of solutions for the problem 
\[
\del_t \buN + \Lr(\nablax (\buN\otimes \buN)= E_N
\] 
with $w\,\mbox{-}lim E_N = 0$, while $\displaystyle \int \langle E_N,\buN \rangle dx $ does not converge to $0$. 

We  summarize the above observations, by stating that  as long as the solution of the Euler equations remains sufficiently smooth, then its spectral and de-aliased pseudo-spectral approximations converge  in $L^2(\Omega)$. 
Indeed, in theorems \ref{thm:Euler_spectral} and \ref{thm:Euler_pspectral}, we quantified the convergence rate for $H^s$-regular solutions $\bu$. If $\bu$ has a minimal smoothness such that the vorticity \mbox{\boldmath$\omega$}${}_N:=\nabla \times \buN$ is compactly embedded in $C([0,T],H^{-1}(\R^N))$, then by the div-curl lemma, $\buN(\cdot,t)$ converges strongly in $L^\infty([0,T],L^2_{loc}(\R^N))$ to an energy-preserving limit solution $\bubar$, \cite{LNT00}.

\smallskip\noindent
The situation is different, however, when dealing with ``rough'' solutions of the underlying Euler equations.
In the absence of any information re:the smoothness of the underlying Euler solutions (--- as loss of smoothness for the $3D$ Euler equations is still a challenging open problem),  energy-preserving numerical method need not shed light on  the question of global regularity vs. finite-time blow-up.
Recall that $L^2$-energy conservation was conjectured by Onsager \cite{On49} and verified  in \cite{Ey94,CET94,BT10} under the assumption of  minimal smoothness of $\bu$, but  otherwise is not supported by the energy decreasing  solutions of Euler equation, \cite{Co07,DeLS12, Buck13}.

\smallskip\noindent
The similar scenario of quadratic entropy conservation in the context of  Burgers' equations, is responsible for spurious oscillations, and  its detailed analysis can be found in \cite{La86} after \cite{vN63}. Here, enforcing energy conservation at the ``critical'' time when Euler solutions seem to lose sufficient smoothness leads to nonlinear  instability which manifests itself through oscillations noticeable throughout the computational domain, in agreement with the numerical evidence observed in \cite{HL07}, see Figure \ref{fig:d}(a) below. The precise large-time behavior of the (pseudo-) spectral approximations is intimately related to a proper albeit yet unclear notion of propagating smoothness for solutions of Euler equations which, even if they do not explicitly blow up, may exhibit spurious oscillations due to the amplification factor in higher norms.

\begin{figure}[ht]
\begin{center}
\begin{tabular}{cc}
\includegraphics[scale=0.6, height=100pt, width=214pt]{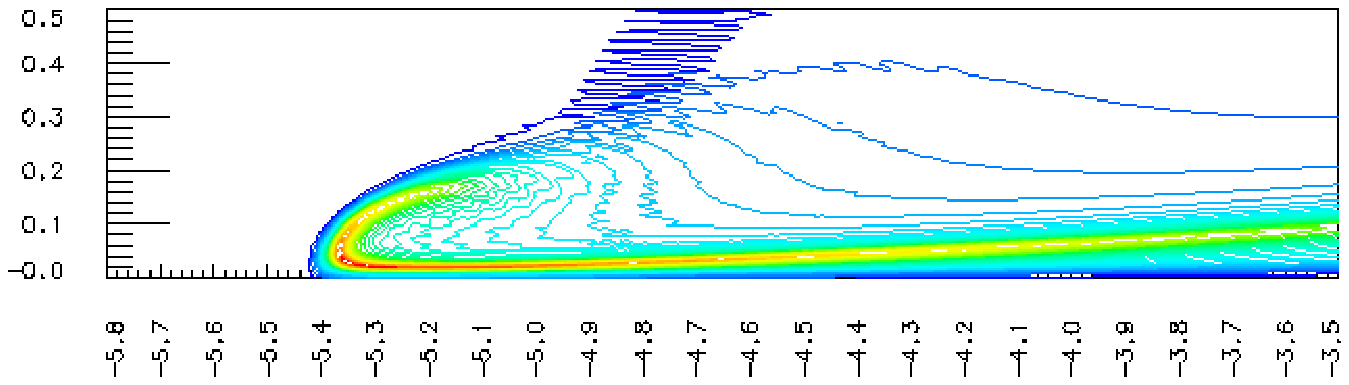} &
\includegraphics[scale=0.6, height=100pt, width=214pt]{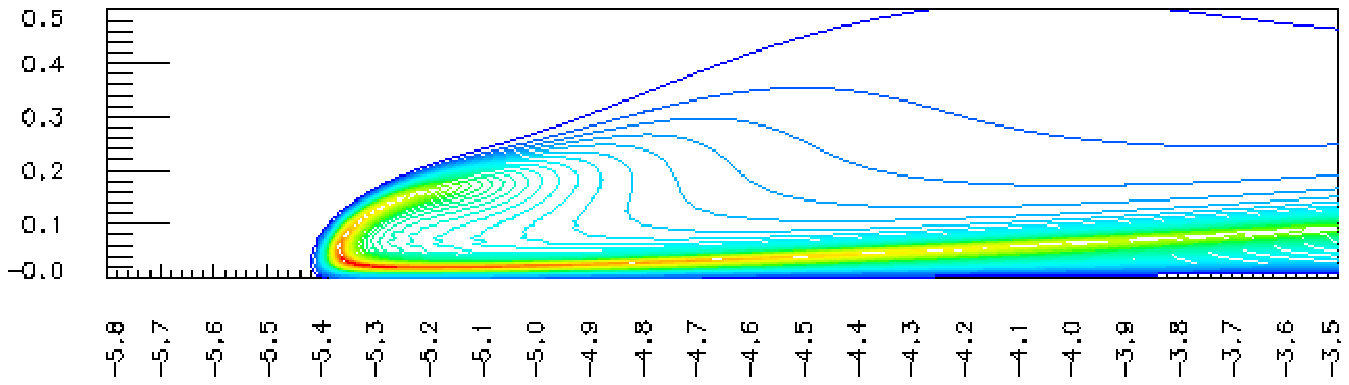} \\
\includegraphics[scale=0.6, height=100pt, width=214pt]{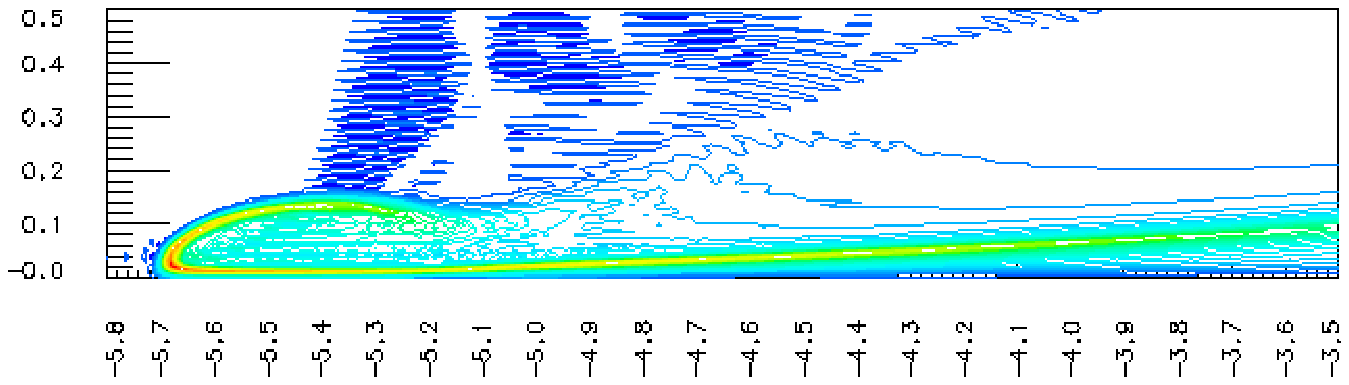} &
\includegraphics[scale=0.6, height=100pt, width=214pt]{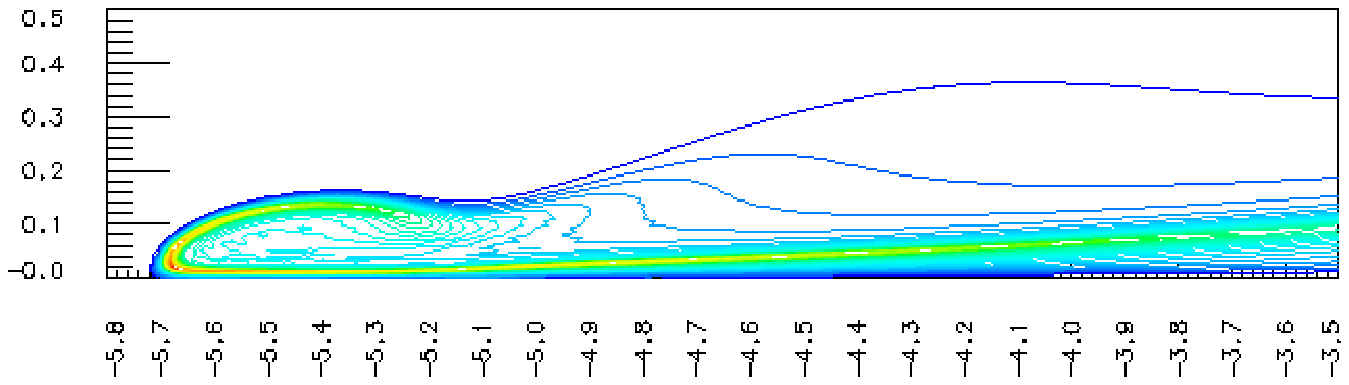}\\
(a) & (b)  
\end{tabular}
\end{center} 
\caption{A comparison of axial vorticity contours of 3D Euler computation \cite{HL07} at $t=18$ (top) and at $t=19$ (bottom). On left(a): the solution obtained by the energy preserving $\frac{2}{3}$ de-aliasing method; on right (b): the solution obtained by an energy decreasing smoothing of the Fourier method. The resolution is $1024\times 768\times 2048$.}\label{fig:d}
\end{figure}

\section{The spectral viscosity method: nonlinear stability and spectral convergence}\label{sec:sv}
The  nonlinear instability results   in sections \ref{sec:2-3rd} and \ref{sec:nonsmoothEuler} emphasize  the  competition between spectral convergence for 
smooth solutions vs. nonlinear instabilities for problems which lack sufficient smoothness. 
One class of methods  for nonlinear evolution equations which entertain both --- spectral convergence and nonlinear stability,  is the class \emph{spectral viscosity} (SV) methods, introduced in \cite{Tad89}.  We demonstrate the SV method in the context of Burgers equation, 

\begin{subequations}\label{eqs:SVb}
\begin{equation}\label{eq:SVb}
\frac{\partial}{\partial t}\uN(x,t)+\frac{1}{2}\frac{\partial}{\partial x}\Big( \IN\left[\uN^2\right](x,t) \Big)=SV[\uN](x,t), \qquad x\in {\mathbb T}([0,2\pi)).
\end{equation}
On the right of (\ref{eq:SVb}) we have added a judicious amount of  spectral viscosity of order $2r$:
\begin{equation}\label{eq:SV}
SV[\uN](x,t):= -N\sum_{|k|\leq N}\sigma\left(\frac{|k|}{N}\right)\widehat{u}_k(t)e^{ikx}, \quad \sigma(\xi)\lesssim 
\left(|\xi|^{2r}-\frac{1}{N}\right)_+, \qquad r\geq1
\end{equation}
\end{subequations}
Without it, the pseudo-spectral solution will develops spurious Gibbs oscillations after the formation of shocks. 
Observe that the spectral viscosity term in (\ref{eq:SV}) adds a \emph{spectrally small} amount of numerical
dissipation for high modes, $k \gg 1$ (in contrast for ''standard'' finite-order amount of numerical dissipation 
in finite-difference methods),
\[
\|SV[w]\|_{\dot{H}^\alpha} \lesssim N^{1-(\alpha-\beta)(1-\frac{1}{2r})}\|w\|_{\dot{H}^\beta}, \qquad \forall \beta \ll \alpha-1 \in {\mathbb R}.
\]
 Indeed, the  low-pass SV filter on the right of (\ref{eq:SVb})  vanishes for modes $|k|\leq {N}^{(2r-1)/2r}$,  which in turn leads to spectral convergence for smooth solutions. Arguing along the lines of theorem \ref{thm:pspsmooth} we state the following.

\begin{theorem}[{\bf Spectral convergence for smooth solutions of Burgers' equations}]\label{thm:svsmoothb} Consider the Burgers equation, \eqref{eq:burgers}, with a smooth solution  $u(\cdot,t)\in L^\infty\left([0,T_c], C^{1+\alpha}(\pil,\pir]\right)$. Then its spectral viscosity approximation \eqref{eqs:SVb},
\[
\frac{d}{dt}\uN(x_\nu,t)+\frac{1}{2}\frac{\partial}{\partial x}\left( \IN\left[\uN^2\right](x,t) \right)_{\big|x=x_\nu}=SV[\uN](x_\nu,t), \qquad \nu=0, 1, \ldots, 2N.
\]
 converges, $\|\uN(\cdot,t)-u(\cdot,t)\|_{L^2} \rightarrow 0$ for $0\leq t \leq T_c$ and the following spectral convergence rate estimate holds for all $s>\threehf$,
\begin{eqnarray*}
\lefteqn{\|\uN(\cdot,t)-u(\cdot,t)\|^2}\\
& &  \lesssim e^{\displaystyle {\int_0^t|u_x(\cdot,\tau)|_\infty d\tau}} \left(N^{-2s}\|u(\cdot,0)\|^2_{H^s} + N^{\frac{2r-1}{2r}(\threehf-s)}\max_{\tau\leq t}\|u(\cdot,\tau)\|_{H^s}\right)\!, \quad s>\threehf.
\end{eqnarray*} 
\end{theorem}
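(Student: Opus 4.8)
The plan is to transcribe the error analysis of Theorem~\ref{thm:pspsmooth}, with the spectral viscosity $SV[\uN]$ of \eqref{eq:SV} playing the stabilizing role that de-aliasing played there. The preliminary step is $L^2$-stability. Testing \eqref{eq:SVb} against $\uN$ and decomposing $\IN=\SN+\AN$, the $\SN$-flux is conservative exactly as in \eqref{eq:spstab}, while the viscosity supplies the dissipative quantity $\int\uN\,SV[\uN]\,dx=-N\sum_{|k|\le N}\sigma(|k|/N)|\widehat{u}_k(t)|^2\le 0$. Following \cite{Tad89,Tad93b}, this dissipation --- which is $O(N)$-strong precisely on the modes $|k|\sim N$ where $\sigma(|k|/N)\sim 1$ --- absorbs the $O(N)$ aliasing amplification carried by $\int\uN\,\partial_x\AN[\uN^2]\,dx$, giving a uniform a priori $L^2$-bound on $\uN$ (and, through compensated compactness, a bounded-variation bound).

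With stability secured, I would run the comparison argument of Theorem~\ref{thm:pspsmooth} essentially verbatim. Rewriting \eqref{eq:SVb} as
\[
\ddt\uN+\ddx\frac{\uN^2}{2}=\hf\,\ddx(I-\IN)[\uN^2]+SV[\uN],
\]
and integrating $(\uN-u)^2\equiv\uN^2-u^2-2u(\uN-u)$, the computation splits into the familiar pieces $\mathcal{I}_1+\mathcal{I}_2$. Since the viscosity destroys exact $L^2$-conservation, $\mathcal{I}_1=\hf\ddt\int\uN^2\,dx$ is kept as a perfect time derivative whose time integral $f_N(t):=\int\uN^2(\cdot,t)\,dx-\int\uN^2(\cdot,0)\,dx$ is controlled by the nonpositive dissipation together with the spectrally small aliasing work. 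Converting the time derivatives in $\mathcal{I}_2=-\int\partial_t\big(u(\uN-u)\big)\,dx$ into spatial ones via \eqref{eq:burgers} and the displayed equation reproduces the Gronwall factor $\hf|u_x|_\infty\int(\uN-u)^2\,dx$ and leaves two residuals: the truncation/aliasing term $\hf\int u_x(I-\IN)[\uN^2]\,dx$ and the new viscous residual $-\int u\,SV[\uN]\,dx$.

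The viscous residual involves only wavenumbers above the effective cutoff $M:=N^{(2r-1)/2r}$ --- since $\sigma$ vanishes below $M$ --- so that $-\int u\,SV[\uN]\,dx=-\int (I-P_M)u\cdot SV[\uN]\,dx$; Young's inequality absorbs one part into the dissipation $\int\uN\,SV[\uN]$ and leaves a lower-order remainder. The convergence rate is then set by the residuals of truncation/aliasing type, and the essential point --- anticipated in the discussion preceding the theorem --- is that the viscosity is spectrally faithful to $u$ only up to wavenumber $M$: it is inactive below $M$ but increasingly dominant above it. Consequently the truncation- and aliasing-type residuals are governed by the scale $M$ rather than $N$, and reading the truncation estimate \eqref{eq:maxHs} at this scale, $\max_x|(I-P_M)[u_x]|\lesssim M^{\threehf-s}\|u\|_{H^s}=N^{\frac{2r-1}{2r}(\threehf-s)}\|u\|_{H^s}$, produces the exponent $\frac{2r-1}{2r}(\threehf-s)$ in place of the clean spectral $\threehf-s$ of Theorem~\ref{thm:pspsmooth}. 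Gronwall's inequality assembles the stated rate.

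I expect the genuinely delicate point to be the two-sided requirement on the viscosity that underlies the whole argument. The filter $\sigma(\xi)\lesssim(|\xi|^{2r}-1/N)_+$ must be strong enough on the band $|k|\sim N$ for the $O(N\sigma(|k|/N))$ damping to dominate the $O(N)$ aliasing gain, securing stability; yet it must remain inactive below $M=N^{(2r-1)/2r}$ so that no accuracy is sacrificed at the low modes, securing spectral convergence. Verifying that this single choice of $\sigma$ meets both demands at once --- so that precisely the fractional exponent $\frac{2r-1}{2r}$ emerges rather than a cruder loss --- is the crux; the remaining manipulations are a routine transcription of the proof of Theorem~\ref{thm:pspsmooth}.
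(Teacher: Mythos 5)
The paper offers no actual proof of this theorem --- it is stated with the single remark ``arguing along the lines of theorem \ref{thm:pspsmooth}'' --- and your outline does follow that indicated route; in particular, you correctly identify the origin of the modified exponent: the SV multiplier \eqref{eq:SV} vanishes for $|k|\le M:=N^{(2r-1)/2r}$, so the consistency error is read off from the truncation estimate \eqref{eq:maxHs} at the effective scale $M$, giving $M^{\threehf-s}=N^{\frac{2r-1}{2r}(\threehf-s)}$. The genuine gap lies in your treatment of aliasing, which is precisely where the transcription of Theorem \ref{thm:pspsmooth} stops being routine: in that theorem aliasing is \emph{identically removed} (corollary \ref{cor:special}, $\SR\big(\IN[\um^2]\big)\equiv\SR[\um^2]$), whereas the SV scheme \eqref{eqs:SVb} keeps the fully aliased product $\IN[\uN^2]$, so the aliasing term is the one new difficulty the proof must confront.

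Both places where you dispose of it are unjustified. Your stability step asserts that the dissipation $-\int\uN\, SV[\uN]\,dx=2\pi N\sum_k\sigma(|k|/N)|\widehat u_k(t)|^2$ ``absorbs'' the aliasing work $\int\uN\,\partial_x\AN[\uN^2]\,dx$. But the aliasing work is \emph{trilinear} in $\uN$, carries an ${\mathcal O}(N)$ amplification, and in each aliased triple $(k,j,j')$ with $j+j'=k\pm(2N+1)$ only two of the three wavenumbers are guaranteed to be high, while the dissipation is merely \emph{quadratic} in the high modes. Absorbing the former into the latter by Young's inequality therefore forces an a priori bound on the remaining factor --- in practice $L^\infty$ (and TV-type) control of $\uN$ --- which is exactly the hypothesis the paper must impose in Theorem \ref{thm:svshockb}, and whose validity its footnote declares open for $r>1$; it also requires a \emph{lower} bound on $\sigma$ near $|k|\sim N$, whereas \eqref{eq:SV} prescribes only an upper bound. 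The same unproved smallness of the aliasing work is what you invoke to control $f_N(t)$. In the smooth-solution setting this can in principle be repaired, but only by a bootstrap/continuity argument that you never set up: postulate $\|\uN\|_{L^\infty}\le\|u\|_{L^\infty}+1$ on a maximal time interval, split the high modes of $\uN$ as high modes of $u$ (spectrally small) plus high modes of the error, close the Gronwall estimate there, and recover the postulate from the $L^2$ error via the inverse inequality $\|\cdot\|_{L^\infty}\lesssim N^{1/2}\|\cdot\|_{L^2}$ --- noting that this closes only for $s$ above a threshold strictly larger than the stated $\threehf$. Finally, a smaller point: compensated compactness does not yield bounded-variation bounds (it gives strong $L^2$ convergence with no TV control), so that parenthetical claim is false, though you never use it.
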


At the same time, spectral viscosity is strong enough to enforce a sufficient amount of $L^2$ energy dissipation, which in turn implies convergence after the formation of shock discontinuities.  We quote below the convergence statement of  the hyper-SV method.

\begin{theorem}[{\bf Convergence of the hyper-SV method for Burgers equation} \cite{Tad89,Tad93b,Tad04}]\label{thm:svshockb} Let $u$ be the unique entropy solution of the inviscid Burgers equation, \eqref{eq:burgers}, subject to uniformly bounded initial data $u_0$, and let $\uN$ be  the spectral viscosity approximation  \eqref{eqs:SVb} subject to $L^\infty$ data $\uN(0)\approx u_0$. Then, if $\uN$ remains uniformly bounded\footnote{The question of uniform boundedness of $\uN$ was proved for the second order SV method, corresponding to $r=1$,  in \cite{Tad93a}, but it remains open for the hyper SV case with $r>1$.} it converges to the unique entropy solution, $\|\uN(\cdot,t)-u(\cdot,t)\|_{L^2} \rightarrow 0$.
\end{theorem}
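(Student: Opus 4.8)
The plan is to treat the spectral viscosity as the regularization that plays the role of vanishing physical viscosity, and to run the compensated-compactness program for scalar conservation laws, exploiting at every step the standing hypothesis that $\uN$ is uniformly bounded in $L^\infty$. First I would extract the dissipation estimate encoded in the SV term: multiplying \eqref{eq:SVb} by $\uN$ and integrating over the torus, the pseudo-spectral quadratic flux contributes a perfect spatial derivative up to aliasing corrections, while the SV term produces the definite-sign dissipation $N\sum_{|k|\le N}\sigma(|k|/N)|\widehat{u}_k|^2$. Using the uniform $L^\infty$ bound to control both the product $\IN[\uN^2]$ and the aliasing error $\AN[\uN^2]$, this yields the spacetime bound
\[
\int_0^T N\!\!\sum_{|k|\le N}\sigma\Big(\tfrac{|k|}{N}\Big)|\widehat{u}_k(\tau)|^2\,d\tau \le C,
\]
which is the exact analogue of the viscous bound $\iint|u^\eps_x|^2\lesssim \eps^{-1}$ and is what makes the high modes controllable despite the spectral smallness of $SV$.

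Next I would show that for every smooth convex entropy pair $(\eta,q)$, $q'=\eta'\cdot u$, the entropy production $\partial_t\eta(\uN)+\partial_x q(\uN)$ lies in a compact subset of $H^{-1}_{\mathrm{loc}}$. The mechanism is Murat's lemma: the entropy production is bounded in $W^{-1,\infty}_{\mathrm{loc}}$ because $\eta(\uN)$ and $q(\uN)$ are uniformly bounded, and it splits into (i) the genuinely dissipative SV contribution $\eta'(\uN)SV[\uN]$, which the dissipation estimate renders precompact in $W^{-1,p}_{\mathrm{loc}}$, and (ii) the aliasing and truncation residuals, together with the commutator between $\eta'(\uN)$ and the clean negative SV measure, which tend to zero in $H^{-1}$ using the spectral decay of $1-\sigma$ as in the proof of Theorem \ref{thm:instab}. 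Interpolating a bound in $W^{-1,\infty}$ against precompactness in a negative Sobolev space places the entropy production in a compact set of $H^{-1}_{\mathrm{loc}}$.

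With $H^{-1}$-compact entropy production for all entropies and $\uN$ uniformly bounded, the Tartar--Murat theory for a single conservation law applies: the associated Young measure $\nu_{x,t}$ satisfies the Tartar commutation relation for every entropy pair, and since the Burgers flux $\uN^2/2$ is strictly convex (hence not affine on any interval), $\nu_{x,t}$ must reduce to a Dirac mass. Therefore a subsequence of $\uN$ converges strongly in $L^2_{\mathrm{loc}}$ to a limit $\ubar$. Passing to the weak limit in \eqref{eq:SVb} — the SV term vanishing in $H^{-1}$ and the aliasing error vanishing as before — shows $\ubar$ is a weak solution, and passing to the limit in the entropy inequalities with the dissipation having the correct sign shows $\ubar$ is the Kruzkov entropy solution. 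By uniqueness the limit is $u$ independently of the subsequence, so the whole sequence converges.

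The hard part will be the second step: one must verify that the SV dissipation is \emph{simultaneously} strong enough to make the entropy production compact in $H^{-1}$ and weak enough that its consistency error vanishes. This is precisely the balance the spectral-viscosity scaling $\sigma(\xi)\sim(|\xi|^{2r}-1/N)_+$ is designed to strike, switching the viscosity off below $|k|\sim N^{(2r-1)/2r}$ to preserve spectral accuracy while loading enough dissipation onto the top modes to annihilate aliasing. Quantifying this trade-off, and in particular controlling the cross terms between the flux's aliasing error and the entropy variable $\eta'(\uN)$, is the technical crux, and it is here that the detailed estimates of \cite{Tad89,Tad93b,Tad04} are invoked.
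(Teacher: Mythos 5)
First, a point of reference: the paper does not prove this theorem at all --- it is explicitly \emph{quoted} from \cite{Tad89,Tad93b,Tad04} (``We quote below the convergence statement of the hyper-SV method''), so your proposal can only be compared with the strategy of those references. Your overall framework --- uniform $L^\infty$ bound, the spacetime dissipation estimate $\int_0^T N\sum\sigma(|k|/N)|\widehat u_k|^2\,d\tau\le C$ obtained by multiplying \eqref{eq:SVb} by $\uN$, $H^{-1}_{\rm loc}$-compactness of entropy production, Young-measure reduction by strict convexity of the Burgers flux, and identification of the limit by uniqueness --- is indeed the compensated-compactness program of \cite{Tad89}, and up to that point your outline is sound.

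The genuine gap is in your last identification step, and it concerns precisely the \emph{hyper}-viscous case $r>1$ that this theorem addresses. You assert that one may pass to the limit ``in the entropy inequalities with the dissipation having the correct sign'' for every convex entropy, thereby obtaining the Kruzkov solution. This is true only for $r=1$, where $\eta'(\uN)\,\partial_x^2\uN=\partial_x\bigl(\eta'(\uN)\partial_x\uN\bigr)-\eta''(\uN)(\partial_x\uN)^2$ splits into a compact term plus a term of definite sign. For $r>1$ no such pointwise structure exists: integrating $\eta'(\uN)\partial_x^{4}\uN$ by parts produces, besides the good term $\eta''(\uN)(\partial_x^2\uN)^2$, the signless term $\eta'''(\uN)(\partial_x\uN)^2\partial_x^2\uN$, so hyper-viscosity does \emph{not} dissipate general convex entropies; this obstruction is exactly why \cite{Tad04} exists. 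The only entropy whose production the SV structure controls is the quadratic one, since multiplying \eqref{eq:SVb} by $\uN$ itself yields the negative-definite term $-N\sum\sigma(|k|/N)|\widehat u_k|^2$. The argument must therefore be closed differently: (i) use the div-curl lemma with the equation and the quadratic entropy pair (exactly as the paper does in its proof of Theorem \ref{thm:instab}) to upgrade to strong $L^2_{\rm loc}$ convergence of a subsequence to a weak solution $\ubar$ of \eqref{eq:burgers}; (ii) pass to the limit in the \emph{quadratic} entropy balance only, obtaining $\partial_t(\ubar^2/2)+\partial_x(\ubar^3/3)\le 0$ in the sense of distributions; (iii) invoke Panov's theorem \cite{Pan94} --- uniqueness of bounded weak solutions of a strictly convex scalar conservation law satisfying a \emph{single} strictly convex entropy inequality --- to conclude $\ubar=u$ and hence convergence of the full sequence. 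As written, your proof silently assumes a sign property that the hyper-SV operator does not possess.
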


\begin{remark} We note that unlike the $2/3$ de-aliasing method, the SV method does \emph{not} completely remove the high-frequencies but instead, it  introduces ``just the right amount'' of smoothing for $|k|\gg1$ which enables to balance spectral accuracy with nonlinear stability. The SV method can be viewed as a proper smoothing which addresses the instability of general smoothing of the
pseudo-spectral Fourier method sought in remark \ref{rem:smoo}.
Moreover, even after the formation of shock discontinuities, the SV solution still contains 
highly accurate information of the exact entropy solution which can be extracted by post-processing, \cite{SW95}.   
\end{remark}

Similar results of spectral convergence of  SV methods hold in the context of  incompressible Euler equations, \cite{KK00,SS07,AX09},
\begin{eqnarray}\label{eq:sveu}
\lefteqn{\ddt \buN + \Lr\nablax \IN\left(\SR\buN\otimes \SR\buN\right)=SV[\buN],} \\
& & \qquad \qquad SV[\buN](\bx,t):= -N\sum_{|\bk|\leq N}\sigma\left(\frac{|\bk|}{N}\right)\widehat{\bu}_\bk(t)e^{i\bk\cdot\bx}. \nonumber
\end{eqnarray}
In contrast to the spurious oscillations with the $2/3$ methods shown in figure \ref{fig:d}(a), the oscillations-free results in \ref{fig:d}(b) correspond to the proper amount of smoothing employed in \cite{HL07}. Thus, the issue  of adding ``just the right amount'' of hyper-viscosity is particularly relevant in this context of Large Eddy Simulation (LES) for highly turbulent flows, when one needs to strike a balance between a sufficient amount of numerical dissipation for stability without giving up on high-order accuracy for physically relevant Euler (and Navier-Stokes solutions). The SV method in (\ref{eq:sveu}) adds this balanced amount of hyper-viscoisty,  \cite{KK00,GP03,SK04,SS07,PSSBS07}.

\section{Beyond quadratic nonlinearities: 1D isentropic equations}\label{sec:isen}
We consider the one-dimensional isentropic equations in Lagrangian coordinate,
\begin{subequations}\label{eqs:isen}
\begin{eqnarray}
&& \ddt u +\ddx  q(v)=0, \quad q'(v)>0\\
&&\ddt v+\ddx  u=0,
\end{eqnarray}
\end{subequations}

which is approximated by the spectral method
\begin{subequations}\label{eqs:specisen}
\begin{eqnarray}
&&\ddt u_N +\ddx q(v_N)=(I-\SN)q(v_N),\\
&&\ddt v_N+\ddx u_N=0.
\end{eqnarray}
\end{subequations}

Denote by $U$ the vector of conservative variables, $U:=(u,v)^\top$, by $F(U)$ the corresponding flux,  $F(U):=(q(v),u)^\top$ and let $\eta(U)$ be the entropy  $\eta(U):=\frac12|u|^2+ Q(v), \ Q'(v)=q(v)$. 
Multiplying the system by $\nabla_U \eta(U)$ and integrating gives:
\[
\frac d{dt} \int\left(\frac{|\uN|^2}2 + \uN \del_x q(v_N)+ q(v_N)\del_x u_N\right)dx=\int (I-\SN)q(v_N)u_Ndx=0
\]
and hence there the total entropy is conserved for both the exact an approximate solutions\footnote{This intriguing property seems specific to the isentropic equation in Lagrangian coordinate.}
\[
\del_t\int \eta(U)dx=0 \quad \hbox{and} \quad \del_t\int \eta(U_N)dx=0.
\]
Continuing as in DiPerna-Chen  \cite{DiP83,Ch97,Daf79}, we write
\begin{eqnarray}
\lefteqn{\del_t\int \left(\eta(U_N)-\eta(U)-\left\langle \eta'(U),U_n-U\right\rangle \right)dx=} \nonumber \\
&& \quad \int \left\langle \eta''(U)U_t, (U_N-U)\right\rangle dx- \int \left\langle \eta'(U),(U_N)_t-U_t \right\rangle dx= \label{eq:abx}\\
&&-\int \left\langle \eta''(U)F(U)_x, U_N-U\right\rangle dx-\int \left\langle \eta'(U), F(U_N))_x- F(U)_x\right\rangle dx+ \hbox{error term}\nonumber \\
& &  =:{\mathcal I}_1+{\mathcal I}_2+{\mathcal I}_3\nonumber
\end{eqnarray}
The first two terms on the right amount to
\begin{eqnarray*}
|{\mathcal I}_1+{\mathcal I}_2|&= &\Big|\int \left\langle \eta''(U) F(U)_x,  U_N-U \right\rangle dx+\int \left\langle \eta'(U), F(U_N)_x-F(U)_x \right\rangle dx\Big|\\
& =&\Big|\int \left\langle \eta''(U) F'(U) U_x, U_N-U\right\rangle dx -\left\langle  \eta''(U)U_x, F(U_N)-F(U)\right\rangle dx\Big|\\
& = &\Big|\int \left\langle \eta''(U) F'(U) U_x, U_N-U\right\rangle dx -\left\langle  \eta''(U)U_x, F'(U)U_x +{\mathcal O}\|U_N-U\|^2\right\rangle dx\Big|.
\end{eqnarray*}
Since the entropy Hessian symmetrize the system, one has $\eta''(U) F'(U)= F'(U) \eta''(U)$, and we conclude that 
 the last expression does not exceed
$$
|{\mathcal I}_1+{\mathcal I}_2|= \Big|\int \left\langle \eta''(U) F'(U) U_x, U_N-U\right\rangle dx -\left\langle \eta''(U) U_x, F(U_N)-F(U)\right\rangle dx\Big|\lesssim \|U\|_{C^1}\|U_N-U\|^2
$$
On the other hand 
$$
{\mathcal I}_3= \hbox{error term}=\int (I-\SN)q_x(v_N)(u-u_N) dx =\int \del_x q(v_N) (I-\SN)u_xdx 
$$
which goes to zero for sufficiently smooth $u \in C^{1+\alpha}$. Inserting the last two bound into (\ref{eq:abx}) we find that
\[
\del_t\int \left(\eta(U_N)-\eta(U)-\left\langle \eta'(U),U_n-U\right\rangle \right)dx \lesssim \|U\|_{C^1}\|U_N-U\|^2 + {o}(1).
\]
By strict convexity, the integrand on the left is of order $\sim \|U_N-U\|^2$ and we conclude the following.
\begin{theorem} Assume that  for $0<t<T_c$, the solution of the isentropic Euler equations \eqref{eqs:isen} is smooth, $U(\cdot,t)\in L^\infty\left([0,T_c), C^{1+\alpha}(\pil,\pir]\right)$. Then, its spectral  approximation \eqref{eqs:specisen} converge in $L^\infty_t L^2_x$,
\[
\|U_N(\cdot,t)-U(\cdot,t)\|_{L^2} \rightarrow 0, \qquad  0\leq t < T_c.
\]
\end{theorem}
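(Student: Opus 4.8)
The plan is to run a \emph{relative entropy} (modulated energy) argument. Since $q'(v)>0$, the entropy $\eta(U)=\frac12|u|^2+Q(v)$ is strictly convex, so the relative entropy
\[
\mathcal{E}_N(t):=\int\big(\eta(U_N)-\eta(U)-\langle\eta'(U),U_N-U\rangle\big)\,dx
\]
is, via its Hessian, bounded below by $\tfrac{\lambda}{2}\|U_N-U\|_{L^2}^2$ and above by $\tfrac{\Lambda}{2}\|U_N-U\|_{L^2}^2$ on any region where $U_N,U$ stay bounded. It therefore suffices to prove $\mathcal{E}_N(t)\to0$, and then read off $\|U_N-U\|_{L^2}\to0$ from the lower bound. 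Two structural facts make this work: the symmetrizability of the system, i.e. $\eta''(U)F'(U)=F'(U)^\top\eta''(U)$, and the conservation of total entropy $\del_t\int\eta(U_N)\,dx=0$ established above, which supplies the uniform a priori control of $U_N$ that the argument needs throughout.

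First I would differentiate $\mathcal{E}_N$ in time and use the evolution equations \eqref{eqs:isen} and \eqref{eqs:specisen} to trade every time derivative for a spatial one, producing the three terms $\mathcal{I}_1+\mathcal{I}_2+\mathcal{I}_3$ of \eqref{eq:abx}. The first two collect the flux contributions. Into them I would insert the Taylor expansion $F(U_N)-F(U)=F'(U)(U_N-U)+\mathcal{O}(\|U_N-U\|^2)$ and invoke the symmetrization identity: the leading first-order terms cancel against each other, leaving only the quadratic remainder, whence $|\mathcal{I}_1+\mathcal{I}_2|\lesssim\|U\|_{C^1}\|U_N-U\|^2\lesssim\|U\|_{C^1}\,\mathcal{E}_N(t)$. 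This is where the entropy structure is indispensable: without the cancellation one would be left with an uncontrolled $\mathcal{O}(\|U_N-U\|)\cdot\|\del_xU\|$ contribution that no Gronwall bound could absorb.

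The remaining term $\mathcal{I}_3$ is the genuine numerical error, carried by the truncation residual $(I-\SN)q(v_N)$ on the right of \eqref{eqs:specisen}. Using self-adjointness of $\SN$ and an integration by parts, I would recast it in the form $\mathcal{I}_3=\int\del_x q(v_N)\,(I-\SN)u_x\,dx$, as in the computation preceding the theorem; the Jackson-type estimate then forces $\|(I-\SN)u_x\|$ to decay like $N^{-\alpha}\ln N$ because $u_x\in C^{\alpha}$. I expect this to be the main obstacle. Unlike the $L^2$ (respectively $L^2_{\SR}$) conservation exploited for Burgers and Euler, the conserved quantity here is $\int\eta(U_N)\,dx$, which bounds $U_N$ (and hence $\|q(v_N)\|_{L^2}$) but \emph{not} its gradient; to turn $\mathcal{I}_3$ into an $o(1)$ term one must therefore either move the surviving derivative onto the smooth exact solution — so that only $\|q(v_N)\|_{L^2}\lesssim1$ from entropy conservation is needed — or establish a separate uniform-in-$N$ bound on $\del_x q(v_N)$. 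This is the single step where the minimal $C^{1+\alpha}$ hypothesis is genuinely used, and the only place the argument could break down for rougher data.

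Collecting the three estimates yields the differential inequality $\dt\mathcal{E}_N(t)\lesssim\|U\|_{C^1}\,\mathcal{E}_N(t)+o(1)$. I would close with Gronwall's inequality on $[0,T_c)$, using the consistent initialization $U_N(\cdot,0)=\SN U(\cdot,0)$, for which $\mathcal{E}_N(0)\lesssim\|(I-\SN)U(\cdot,0)\|_{L^2}^2\to0$ since $U(\cdot,0)$ is smooth. Strict convexity then converts $\mathcal{E}_N(t)\to0$ back into $\|U_N(\cdot,t)-U(\cdot,t)\|_{L^2}\to0$, uniformly on compact subintervals of $[0,T_c)$, which is the claimed convergence.
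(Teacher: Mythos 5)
Your proposal is correct and follows essentially the same route as the paper: the DiPerna--Chen relative-entropy argument, with the same decomposition into ${\mathcal I}_1+{\mathcal I}_2+{\mathcal I}_3$, the same use of the symmetrization identity $\eta''(U)F'(U)=F'(U)\eta''(U)$ to cancel the first-order flux terms, and the same Gronwall-plus-strict-convexity conclusion. If anything, your handling of ${\mathcal I}_3$ is more careful than the paper's: you move the surviving derivative onto the smooth exact solution so that only $\|q(v_N)\|_{L^2}$ (controlled by entropy conservation) is needed, whereas the paper's displayed form $\int \del_x q(v_N)\,(I-\SN)u_x\,dx$ leaves the potentially unbounded factor $\del_x q(v_N)$ unaddressed.
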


\bibliographystyle{plain}

\end{document}